\tikzstyle{arrow}=[thick, ->, >=stealth]
\tikzset{
	edge/.style={->,> = latex'}
}
\definecolor{darkblue}{rgb}{0.0,0,0.7} 
\definecolor{darkgreen}{rgb}{0, .6, 0} 
\definecolor{lightblue}{rgb}{0,135,147}
\definecolor{red}{rgb}{1,0,0}
\definecolor{darkred}{rgb}{0.7,0,0} 
\definecolor{lightgrey}{rgb}{0.7,0.7,0.7} 
\newtheorem{theorem}{Theorem}[section]
\newtheorem{proposition}[theorem]{Proposition}
\newtheorem{lemma}[theorem]{Lemma}
\theoremstyle{definition}
\newtheorem{definition}[theorem]{Definition}
\newtheorem{example}[theorem]{Example}
\newtheorem{remark}[theorem]{Remark}
\numberwithin{equation}{section}
\definecolor{darkred}{rgb}{0.7,0,0} 
\newcommand{\defn}[1]{{\color{darkred}\emph{#1}}} 
\title[Mixing time]{Upper bounds on mixing time of finite Markov chains}
\author[J.~Rhodes]{John Rhodes}
\address[J. Rhodes]{Department of Mathematics, University of California, Berkeley, CA 94720, U.S.A.}
\email{rhodes@math.berkeley.edu, blvdbastille@gmail.com}
\author[A.~Schilling]{Anne Schilling}
\address[A. Schilling]{Department of Mathematics, UC Davis, One Shields Ave., Davis, CA 95616-8633, U.S.A.}
\email{anne@math.ucdavis.edu}
\date{\today}
\keywords{Markov chains, Karnofsky--Rhodes expansion, McCammond expansion, mixing time}
\subjclass[2010]{Primary 05E16, 20M30, 60J10; Secondary 20M05, 60B15, 60C05}
\dedicatory{Dedicated to Ron Graham and Vaughan Jones}
\begin{document}

\begin{abstract}
We provide a general framework for computing upper bounds on mixing times of finite Markov chains when its minimal ideal is left zero. 
Our analysis is based on combining results by Brown and Diaconis with our previous work on stationary distributions of 
finite Markov chains. Stationary distributions can be computed from the Karnofsky--Rhodes and McCammond 
expansion of the right Cayley graph of the finite semigroup underlying the Markov chain. Using loop graphs, which are
planar graphs consisting of a straight line with attached loops, there are rational expressions for the stationary distribution
in the probabilities. From these we obtain bounds on the mixing time. In addition, we provide a new Markov chain on linear 
extension of a poset with $n$ vertices, inspired by but different from the promotion Markov 
chain of Ayyer, Klee and the last author. The mixing time of this Markov chain is $O(n \log n)$.
\end{abstract}

\maketitle

\section{Introduction}

A \defn{Markov chain} is a model that describes transitions between states in a state space according to certain probabilistic 
rules. The defining characteristic of a Markov chain is that the transition from one state to another only depends on the current
state and the elapsed time, but not how the system arrived there. In other words, a Markov chain is  ``memoryless''. 
Markov chains have an abundance of applications, from data analysis, population dynamics to traffic models.

For a Markov chain, the \defn{stationary distribution} $\Psi$ is the long-term limiting distribution. Mathematically
speaking, it is the eigenvector of the transition matrix $T$ of the Markov chain with eigenvalue one. That is
\[
	T \Psi = \Psi.
\]
An important question is how quickly does the Markov chain converge to the stationary distribution.
In Markov chain theory, distance is usually the total variation distance or half the $L^1$-norm in classical analysis. 
If $\Omega$ is the state space, the total variation distance between two probability distributions $\nu$ and $\mu$ is 
defined as
\[
	\|\nu - \mu\| = \max_{A \subseteq \Omega} |\nu(A) - \mu(A)|.
\]
For a given small $\epsilon>0$, the \defn{mixing time} $t_\mathsf{mix}$ is the smallest $t$ such that
\[
	\| T^t \nu - \Psi \| \leqslant \epsilon,
\]
independent of the initial distribution $\nu$.

In seminal work of Bidigare, Hanlon and Rockmore~\cite{BHR.1999}, which was continued by Diaconis, Brown, 
Athanasiadis, Bj\"orner, Chung and Graham, amongst others~\cite{BrownDiaconis.1998,BBD.1999,Brown.2000,Bjorner.2008,
Bjorner.2009,Athanasiadis.Diaconis.2010,ChungGraham.2012,Saliola.2012}, the special family of semigroups, now known as 
\emph{left regular bands} first studied by Sch\"utzenberger~\cite{Schuetzenberger.1947} in the forties, was applied to 
random walks or Markov chains on hyperplane arrangements. In his 1998 ICM lecture~\cite{Diaconis.icm.1998}, Diaconis 
discussed these developments. In Section~4.1, entitled \emph{What is the ultimate generalization?}, he asks 
how far the semigroup techniques can be taken. 

Every finite state Markov chain $\mathcal{M}$ has a random letter representation, that is, a representation of a semigroup 
$S$ acting on the left on the state space $\Omega$. See for example~\cite[Proposition 1.5]{LevinPeres.2017} 
and~\cite[Theorem 2.3]{ASST.2015}. In this setting, there is a transition $s \stackrel{a}{\longrightarrow} s'$ with probability 
$0\leqslant x_a\leqslant 1$, where $s, s'\in \Omega$, $a\in S$ and $s'=a.s$ is the action of $a$ on the state $s$. 
It is enough to consider the semigroup $S$ generated by the elements $a$ with $x_a>0$, called the generating set $A$.
For example, the Markov chain with state space $\Omega =\{1,2\}$ and transition diagram
\begin{equation}
\label{equation.markov linear}
\raisebox{-1cm}{
\begin{tikzpicture}[->,>=stealth',shorten >=1pt,auto,node distance=3cm,
                    semithick]
  \tikzstyle{every state}=[fill=red,draw=none,text=white]

  \node[state]         (I) {$\mathbf{1}$};
  \node[state]         (A) [right of=I] {$\mathbf{2}$};
  \path (I) edge [bend left] node {$2,3$} (A)
               edge [loop left] node {$1$} (I)
           (A) edge[bend left] node {$1,3$} (I)
                 edge [loop right] node {$2$} (A);
\end{tikzpicture}}
\end{equation}
can be associated to the semigroup with right Cayley graph depicted in Figure~\ref{figure.right cayley linear}. 
The conceptual reason why a Markov chain described using the \textit{left} action of a semigroup can be analyzed using the 
\textit{right} Cayley graph is that if time goes left (due to the left action), then coupling from the past corresponds to
right multiplication. The transition matrix in this Markov chain is
\[
	T = \begin{pmatrix}
	x_1 & x_1+x_3\\
	x_2+x_3 & x_2
	\end{pmatrix}.
\]
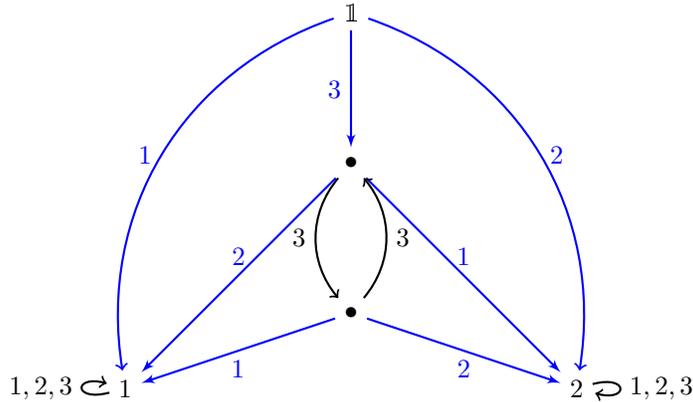
\begin{figure}[t]
\begin{center}
\begin{tikzpicture}[auto]
\node (A) at (0, 0) {$\mathbbm{1}$};
\node (B) at (-3,-5) {$1$};
\node(C) at (3,-5) {$2$};
\node(D) at (0,-2) {$\bullet$};
\node(E) at (0,-4) {$\bullet$};
\path (A) edge[->,thick, blue, bend right = 40] node[midway,left] {$1$} (B)
              edge[->,thick, blue, bend left = 40] node[midway,right] {$2$} (C);
\draw[edge,thick,blue] (A) -- (D) node[midway, left] {$3$};
\draw[edge,thick,blue] (D) -- (B) node[midway, above] {$2$};
\draw[edge,thick,blue] (D) -- (C) node[midway, above] {$1$};
\draw[edge,thick,blue] (E) -- (B) node[midway, below] {$1$};
\draw[edge,thick,blue] (E) -- (C) node[midway, below] {$2$};
\path (D) edge[->,thick, bend right = 40] node[midway,left] {$3$} (E);
\path (E) edge[->,thick, bend right = 40] node[midway,right] {$3$} (D);
\path (B) edge[->,thick, loop left] node {$1,2,3$} (B);
\path (C) edge[->,thick, loop right] node {$1,2,3$} (C);
\end{tikzpicture}
\end{center}
\caption{\label{figure.right cayley linear} The right Cayley graph $\mathsf{RCay}(S,A)$ of the semigroup that
gives the Markov chain in~\eqref{equation.markov linear} with generators $A=\{1,2,3\}$.}
\end{figure}

In the pursuit of finding Diaconis' ultimate generalization~\cite{Diaconis.icm.1998}, the arguments in Brown and 
Diaconis~\cite{BrownDiaconis.1998} were generalized to Markov chains for $\mathscr{R}$-trivial 
semigroups~\cite{ASST.2015}. In~\cite{RhodesSchilling.2019,RhodesSchilling.2019a}, the current authors developed
a general theory for computing the stationary distribution for any finite Markov chain. The theory uses semigroup methods
such as the Karnofsky--Rhodes and McCammond expansion of a semigroup. These expansions give rise to loop graphs
which immediately yield Kleene expressions for all paths from the root of the graph to elements in the minimal ideal of
the semigroup. The Kleene expressions in turn give rational expressions for the stationary distribution.

In this paper we apply the findings of~\cite{RhodesSchilling.2019,RhodesSchilling.2019a} to study upper bounds on the mixing
time of the Markov chain. In particular, Theorems~\ref{theorem.main} and~\ref{theorem.main1} provide upper bounds 
for the mixing time directly from the rational expression of the stationary distribution in the case when the minimal ideal
of the semigroup is left zero. This general theory is applied to specific examples (Tsetlin library, edge flipping on a line Markov chain,
and a new Markov chain on linear extensions) in Section~\ref{section.examples}.

The paper is organized as follows. In Section~\ref{section.mixing time}, we develop the main theory.
In Section~\ref{section.mixing truncation}, we present our main theorems regarding the upper bounds
on the mixing time (see Theorems~\ref{theorem.main} and~\ref{theorem.main1}). 
We discuss the relation to the Shannon entropy in Section~\ref{section.shannon}.
In Section~\ref{section.decreasing statistics}, we refine bounds on the mixing time using certain statistics
that were developed in~\cite{ASST.2015,ASST.2015a}. In Section~\ref{section.syntactic}, we consider semigroups
syntactic at zero. In particular, we prove in Theorem~\ref{theorem.syntactic} that the upper bounds on the mixing time do not change by
replacing the semigroup by its syntactic image. In Section~\ref{section.languages}, we relate observations on mixing
time to $d$-testable languages. Finally, in Section~\ref{section.examples} we consider specific examples such as the
Tsetlin library~\cite{Tsetlin.1963}, edge flipping on a line~\cite{BrownDiaconis.1998,ChungGraham.2012}, and a new Markov
chain on linear extensions of a poset with $n$ vertices, which is inspired by but different from the promotion Markov 
chain~\cite{AyyerKleeSchilling.2014}. This new Markov chain has a mixing time of $O(n \log n)$ as
compared to the mixing time of the model of Bubley and Dyer~\cite{BubleyDyer.1999} with mixing time $O(n^3 \log n)$.

\subsection*{Acknowledgments}
We are grateful to Arvind Ayyer, Darij Grinberg, John Hunter, Stuart Margolis, Igor Pak, Dan Romik, 
Eric Severson, Benjamin Steinberg, and Andrew Waldron for discussions.
The last author was partially supported by NSF grants DMS--1760329, DMS--1764153, and  DMS--205335. 
This material is based upon work supported by the Swedish Research Council under
grant no. 2016-06596 while the author was in residence at Institut 
Mittag--Leffler in Djursholm, Sweden during Spring 2020.

An extended abstract of this paper has appeared in the proceedings for FPSAC 2021~\cite{RS.2021}.

\section{Mixing time}
\label{section.mixing time}

Let $T$ be the \defn{transition matrix} of a finite Markov chain. Assuming that the Markov chain is \defn{ergodic}
(meaning that it is irreducible and aperiodic), by the Perron--Frobenius Theorem there exists a unique \defn{stationary 
distribution} $\Psi$ and $T^t \nu$ converges to $\Psi$ as $t\to \infty$ for any initial state $\nu$. A Markov chain is irreducible 
if the graph of the Markov chain is strongly connected. It is aperiodic if the gcd of the cycle lengths in the graph of the Markov 
chain is one. In fact, the stationary distribution is the right eigenvector of eigenvalue one of $T$
\[
	T \Psi = \Psi.
\]
The \defn{mixing time} measures how quickly the Markov chain converges to the stationary distribution.
For a given small $\epsilon>0$, $t_\mathsf{mix}$ is the smallest $t$ such that
\[
	\| T^t \nu - \Psi \| \leqslant \epsilon.
\]

We begin this section by reviewing methods to compute upper bounds on mixing times in Section~\ref{section.upper bound}
(see in particular Theorem~\ref{theorem.ASST}), relations between ideals and semaphore codes and how this relates to mixing time
in Section~\ref{section.semaphore}, and the Markov and Chernoff inequalities to bound mixing time in Sections~\ref{section.markov}
and~\ref{section.chernoff}. The semigroup methods of~\cite{RhodesSchilling.2019,RhodesSchilling.2019a} to compute rational expressions
of the stationary distribution of a Markov chain in terms of the probabilities $x_a$ for the generators $a\in A$ of the semigroup
are reviewed in Section~\ref{section.rational}. Our main new results for the upper bounds of the mixing times in terms of truncations
of the rational expressions of the stationary distribution (Theorem~\ref{theorem.main}) and using a Cauchy--Euler operator (Theorem~\ref{theorem.main1})
are stated in Section~\ref{section.mixing truncation}. In Section~\ref{section.shannon} we discuss the relation between Shannon entropy and
mixing time. Sections~\ref{section.decreasing statistics}-\ref{section.languages} are devoted to new results in special settings, for example
for monoids which are syntactic at zero (Theorem~\ref{theorem.syntactic}) and $d$-testable languages (Remark~\ref{remark.ideal containment}).

\subsection{Upper bound}
\label{section.upper bound}

Brown and Diaconis~\cite{BrownDiaconis.1998}~\cite[Theorem 0]{Brown.2000} showed, for Markov chains
associated to left regular bands, that the total variational distance from stationarity after $t$ steps is bounded above
by the probability $\mathsf{Pr}(\tau> t)$, where $\tau$ is the first time that the walk hits a certain ideal. The arguments in
Brown and Diaconis~\cite{BrownDiaconis.1998} can be generalized to arbitrary finite Markov chains (not just those
related to left regular bands). To state the details, we need some more notation.

Let $\mathcal{M}(S,A)$ be a finite state Markov chain with state space $\Omega$ and transition matrix $T$ associated to the 
semigroup $S$ with generators $A$ with probabilities $0<x_a\leqslant 1$ for $a\in A$.

A two-sided \defn{ideal} $I$ (or ideal for short) is a subset $I \subseteq S$ such that $u I v \subseteq I$ for all 
$u,v \in S^{\mathbbm{1}}$, where $S^{\mathbbm{1}}$ is the semigroup $S$ with identity $\mathbbm{1}$ added (even if
$S$ already contains an identity). If $I,J$ are ideals of $S$, then $IJ \subseteq I \cap J$, so that $I \cap J \neq \emptyset$. 
Hence every finite semigroup has a unique nonempty minimal ideal denoted $K(S)$.

Assume that the minimal ideal $K(S)$ is \defn{left zero}, that is, $xy=x$ for all $x,y\in K(S)$.
This assumption implies that the Markov chain on the minimal ideal (given by the left action) is ergodic.
Let $\tau$ be the random variable which is the time that the random walk is absorbed into the minimal ideal $K(S)$.

\begin{theorem} \cite{ASST.2015}
\label{theorem.ASST}
Let $S$ be a finite semigroup whose minimal ideal $K(S)$ is a left zero semigroup and let $T$ be the transition
matrix of the associated Markov chain. Then
\[
	\| T^t \nu - \Psi \| \leqslant \mathsf{Pr}(\tau>t).
\]
\end{theorem}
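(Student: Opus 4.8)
The plan is a coupling argument in the spirit of Brown and Diaconis' ``coupling from the past'': once the random word read by the walk has entered the left‑zero minimal ideal $K(S)$, the resulting state forgets where the walk started. Concretely, I would use the random letter representation recalled above \cite{LevinPeres.2017,ASST.2015}: an i.i.d.\ sequence $a_1,a_2,\dots\in A$ with $\mathsf{Pr}(a_i=a)$ proportional to $x_a$, so that the walk started at $s_0\sim\nu$ is at state $a_ta_{t-1}\cdots a_1.s_0$ after $t$ steps, with law $T^t\nu$. Since the $a_i$ are i.i.d., reversing the order of a finite word does not change its joint law, so $T^t\nu$ is equally the law of $a_1a_2\cdots a_t.s_0$; this is the device that turns ``absorption into $K(S)$'' into a statement about \emph{prefixes} of the word. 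Let $\sigma=\min\{n\ge 1:a_1a_2\cdots a_n\in K(S)\}$. Because $S$ is generated by $A$, some finite product of generators lies in $K(S)$, so $\sigma<\infty$ almost surely; and since reversal preserves the law of the word, $\sigma$ has the same distribution as the absorption time $\tau$.

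The only nontrivial algebraic ingredient is: for every $e\in K(S)$ and every $w\in S^{\mathbbm{1}}$ one has $ew=e$. Indeed $e$ is idempotent (set $y=x=e$ in $xy=x$) and $ew\in K(S)$ since $K(S)$ is an ideal, so $ew=(ee)w=e(ew)=e$ by left‑zeroness applied to $e,ew\in K(S)$. Combined with irreducibility of the chain this shows that each $e\in K(S)$ acts on $\Omega$ as a constant map: given $s,s'\in\Omega$, choose $w\in S$ with $w.s'=s$ (irreducibility); then $e.s=e.(w.s')=(ew).s'=e.s'$.

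Now run the coupling. Let $\tilde s_0\sim\Psi$ be independent of the letters, and set $X_t=a_1\cdots a_t.s_0$ and $\widetilde X_t=a_1\cdots a_t.\tilde s_0$; by the reversal remark and stationarity $\widetilde X_t\sim T^t\Psi=\Psi$, while $X_t\sim T^t\nu$. On the event $\{\sigma\le t\}$ we may write $a_1\cdots a_t=e$, where $e:=a_1\cdots a_\sigma\in K(S)$, using the identity $ew=e$ with $w=a_{\sigma+1}\cdots a_t\in S^{\mathbbm{1}}$; since $e$ acts as a constant map, $X_t=e.s_0=e.\tilde s_0=\widetilde X_t$. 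Thus the two copies have coalesced by time $\sigma$, and the coupling inequality yields
\[
\|T^t\nu-\Psi\|\le\mathsf{Pr}(X_t\ne\widetilde X_t)\le\mathsf{Pr}(\sigma>t)=\mathsf{Pr}(\tau>t).
\]

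The step I expect to require the most care is the time‑reversal bookkeeping: keeping straight that $T^t$ corresponds to multiplying new letters on the left, that an i.i.d.\ word and its reversal have the same joint law (so the reversed walk may be analyzed in place of the original), and hence that the forward hitting time $\sigma$ of the reversed word genuinely has the same distribution as the absorption time $\tau$. Once left‑zeroness and irreducibility are in hand, the lemma $ew=e$ and the constant‑map property are short, and the coupling inequality finishes the proof.
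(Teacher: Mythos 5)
Your proof is correct. Note that the paper does not actually argue this theorem from scratch: its ``proof'' consists of two citations to \cite{ASST.2015} (Corollary~3.5(3) for the bound by $P^{\star t}(S\setminus K(S))$ and Eq.~(4.6) to identify that quantity with $\mathsf{Pr}(\tau>t)$). What you have written is essentially a self-contained rendering of the Brown--Diaconis coupling-from-the-past argument that underlies the cited corollary: reverse the i.i.d.\ word so that absorption becomes a statement about prefixes, show that every $e\in K(S)$ satisfies $ew=e$ and hence acts as a constant map, and conclude by the coupling inequality. All the steps check out; two small remarks. First, the almost-sure finiteness of $\sigma$ deserves one more clause: you need that any word containing a fixed $u$ with $[u]_S\in K(S)$ as a factor already lies in $K(S)$ (this uses that $K(S)$ is a two-sided ideal), together with the fact that such a factor occurs almost surely in an i.i.d.\ sequence with all $x_a>0$. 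Second, in the setting of this paper the state space is taken to be $\Omega=K(S)$ itself, so the constant-map property is immediate from left-zeroness ($e.s=es=e$) and your appeal to irreducibility is not needed there, though it does make your argument work for a general irreducible action. So your write-up is a legitimate (and more informative) substitute for the paper's citation-only proof.
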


\begin{proof}
By~\cite[Corollary 3.5(3)]{ASST.2015}, we have
\[
	\| T^t \nu - \Psi \| \leqslant P^{\star t}(S \setminus K(S)),
\]
where $P^{\star n}$ denotes the $n$-th convolution power of $P$.
By~\cite[Eq. (4.6)]{ASST.2015}, the right hand side equals $\mathsf{Pr}(\tau>t)$.
\end{proof}

\subsection{Ideals and semaphore codes}
\label{section.semaphore}
Let $A$ be a finite alphabet, $A^+$ the set of all nonempty words in the alphabet $A$, and $A^\star$ the set of all
words in the alphabet $A$.

As shown in~\cite{RSS.2016}, ideals in $A^+$ are in bijection with semaphore codes~\cite{BPR.2010}. 
A \defn{prefix code} is a subset of $A^+$ such that all elements are incomparable in prefix order (meaning that no 
element is the prefix of any other element of the code). 
A \defn{semaphore code} $\mathcal{S}$ is a prefix code such that $A \mathcal{S} \subseteq \mathcal{S} A^\star$.
There is a natural left action on a semaphore code. If $u \in \mathcal{S} \subseteq A^+$ and $a \in A$, then $a u$ 
has a prefix in $\mathcal{S}$ (and hence a unique prefix of $a u$). The left action $a.u$ is the prefix of $a u$ that is in 
$\mathcal{S}$. Assigning probability $0 \leqslant x_a \leqslant 1$ to $a\in A$, the left action on a semaphore code 
$\mathcal{S}$ defines a Markov chain with a countable state space $\mathcal{S}$.

The bijection between ideals $I \subseteq A^+$ and semaphore codes $\mathcal{S}$ over $A$ is given as follows
(see~\cite[Proposition 4.3]{RSS.2016}).
If $u = a_1 a_2  \ldots a_j \in I \subseteq A^+$, find the (necessarily unique) index $1\leqslant i \leqslant j$ 
such that $a_1 \ldots a_{i-1} \not \in I$, but $a_1 \ldots a_i \in I$. Then $a_1 \ldots a_i$ is a code word and the set of all 
such words forms the semaphore code $\mathcal{S}$. Conversely, given a semaphore code 
$\mathcal{S}$, the corresponding ideal is $\mathcal{S} A^\star$. 

In this setting, $\tau$ can be interpreted as the random variable given by the length of the semaphore code words. 
Let $\mathcal{S}$ be a semaphore code and $I$ the ideal under the
bijection described above. A semaphore code word $s = s_1 s_2 \ldots s_\ell$ has the property that $s\in I$,
but $s_1 s_2 \ldots s_{\ell-1} \not \in I$. Hence $\tau$ can be interpreted as the random variable
given by the length $\ell$.

Next we discuss two ways to approximate $\mathsf{Pr}(\tau>t)$ using Markov's and Chernoff's inequality.

\subsection{Markov's inequality}
\label{section.markov}
By Markov's inequality (see for example~\cite{LevinPeres.2017,DevroyeLugosi.2001}), we have
\begin{equation}
\label{equation.Markov inequality}
	\mathsf{Pr}(\tau>t) \leqslant \frac{E[\tau]}{t+1},
\end{equation}
where $E[\tau]$ is the expected value for $\tau$, the first time the walk hits the ideal. We have
\begin{equation}
\label{equation.Etau}
	E[\tau] = \sum_{a=1}^\infty \mathsf{Pr}(\tau \geqslant a).
\end{equation}

\subsection{Chernoff's inequality}
\label{section.chernoff}
Chernoff's inequality uses the moment generating function combined with Markov's 
inequality~\eqref{equation.Markov inequality} to give an upper bound on $\mathsf{Pr}(\tau\geqslant t)$. More precisely,
\[
	\mathsf{Pr}(\tau\geqslant t) = \mathsf{Pr}(e^{s\tau} \geqslant e^{st}) \qquad \text{for $s>0$.}
\]
Hence by Markov's inequality~\eqref{equation.Markov inequality}
\[
	\mathsf{Pr}(\tau \geqslant t) \leqslant \frac{E[e^{s\tau}]}{e^{st}}
\]
and since this is true for all $s>0$
\[
	\mathsf{Pr}(\tau \geqslant t) \leqslant \min_{s>0}\left\{ \frac{E[e^{s\tau}]}{e^{st}} \right\}.
\]

\subsection{Rational expressions for stationary distributions}
\label{section.rational}

Let $\mathcal{M}(S,A)$ be the Markov chain associated to the finite semigroup $S$ with generators in $A$. 
Assume that its minimal ideal $K(S)$ is left zero, so that $K(S)$ can be taken as the state space $\Omega$ of the 
Markov chain. Denote by $\mathcal{S}(S,A)$ the semaphore code associated to $K(S)$ (see 
Section~\ref{section.semaphore}). For a word $s \in A^+$, we denote by $[s]_S$ the image of the word in the alphabet 
$A$ in $S$. The following theorem is stated in~\cite[Corollaries 2.23 \& 2.28]{RhodesSchilling.2019}.

\begin{theorem} \cite{RhodesSchilling.2019}
\label{theorem.stationary}
If $K(S)$ is left zero, the stationary distribution of the Markov chain $\mathcal{M}(S,A)$ labeled by $w \in K(S)$
is given by
\begin{equation}
\label{equation.Psiw}
	\Psi_w(x_1,\ldots,x_n) = \sum_{\stackrel{s \in \mathcal{S}(S,A)}{[s]_S=w}} 
	\; \prod_{a\in s} x_a.
\end{equation}
\end{theorem}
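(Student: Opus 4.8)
The plan is to prove the formula by realizing $\Psi$ as the law of a random element of $K(S)$ produced by a coupling-from-the-past construction in the style of Brown--Diaconis and Propp--Wilson, and then to identify that random element combinatorially with the semaphore code $\mathcal S(S,A)$. Throughout I use the setup of Section~\ref{section.rational}, where $\Omega=K(S)$.

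First I would isolate the key consequence of the left-zero hypothesis: if $x\in K(S)$ and $b\in S^{\mathbbm{1}}$, then $xb\in K(S)$ because $K(S)$ is an ideal, and in fact $xb=x$, since $x=xy$ for every $y\in K(S)$ and in particular for $y=xb$, whence $x=x(xb)=(xx)b=xb$. Consequently, once a product of generators has image in $K(S)$, that image is unchanged by further right multiplication by generators. Next, since $S$ is generated by $A$ and $K(S)\neq\emptyset$, fix a word $v\in A^+$ with $[v]_S\in K(S)$; using $x_a>0$ for all $a$ and cutting an i.i.d.\ stream of generators into consecutive blocks of length $|v|$, a Borel--Cantelli argument shows that almost surely some prefix of the stream has image in $K(S)$ (and then, by the previous sentence, so do all longer prefixes).

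Now sample i.i.d.\ generators $b_1,b_2,b_3,\dots\in A$ with $\mathsf{Pr}(b_i=a)=x_a$, thought of as the letters applied at times $0,-1,-2,\dots$; then the state reached at time $0$ from any initial state at time $-t$ is $[b_1b_2\cdots b_t]_S$ acting on that state. By the previous paragraph these images stabilize almost surely to a random $w\in K(S)$, and since every element of $K(S)$ acts on $\Omega=K(S)$ as the constant map $\sigma\mapsto w\sigma=w$, the value $w$ does not depend on the state at time $-t$: this is precisely the coupling-from-the-past output. By the usual shift-invariance argument — prepending a fresh generator $b_0$ replaces $w$ by $[b_0]_S\,w$ while preserving its law — the law $\mu$ of $w$ on $K(S)$ satisfies $T\mu=\mu$, so $\mu=\Psi$ by uniqueness of the stationary distribution. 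Finally, let $I=\{u\in A^+:[u]_S\in K(S)\}$; this is the ideal of $A^+$ whose associated semaphore code is $\mathcal S(S,A)$, and the infinite word $b_1b_2b_3\cdots$ almost surely has a (necessarily unique) prefix $s=b_1\cdots b_\ell$ lying in $\mathcal S(S,A)$. For every $t\geqslant\ell$ one has $[b_1\cdots b_t]_S=[s]_S$ by the stabilization above, so $w=[s]_S$, and therefore for $w\in K(S)$
\[
\Psi_w=\mathsf{Pr}\bigl([s]_S=w\bigr)=\sum_{\substack{s\in\mathcal S(S,A)\\ [s]_S=w}}\mathsf{Pr}\bigl(b_1\cdots b_{|s|}=s\bigr)=\sum_{\substack{s\in\mathcal S(S,A)\\ [s]_S=w}}\prod_{a\in s}x_a,
\]
as claimed; summing over $w\in K(S)$ also recovers $\sum_{s\in\mathcal S(S,A)}\prod_{a\in s}x_a=1$.

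The routine parts are the Borel--Cantelli estimate and the shift-invariance computation. The delicate points I expect to spend most care on are getting the direction of time and of multiplication right, so that it is the coupling-from-the-past product — not the forward walk — that stabilizes, and the bookkeeping identifying ``the shortest prefix of $b_1b_2\cdots$ whose image lies in $K(S)$'' with a semaphore code word; here the left-zero hypothesis is exactly what forces $[b_1\cdots b_t]_S$ to be constant for $t\geqslant\ell$, so that $w$ is a function of that prefix alone.
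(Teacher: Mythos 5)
Your proof is correct. The paper does not reprove this statement---it imports it from \cite{RhodesSchilling.2019} (Corollaries 2.23 and 2.28)---but your coupling-from-the-past argument (the product $[b_1\cdots b_t]_S$ stabilizes as soon as a prefix of the generator stream lands in the left-zero minimal ideal, that shortest prefix is by definition a word of the semaphore code $\mathcal{S}(S,A)$, and shift-invariance identifies the law of the stabilized value with $\Psi$) is exactly the mechanism the paper itself invokes in the introduction (``coupling from the past corresponds to right multiplication'') and in Section~\ref{section.semaphore}, so it is essentially the intended proof.
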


In~\cite{RhodesSchilling.2019,RhodesSchilling.2019a}, we developed a strategy using loop graphs to compute 
the expressions in Theorem~\ref{theorem.stationary} as rational functions in the probabilities $x_a$ for $a\in A$. 
This is done in several steps:
\begin{enumerate}
\item We used the McCammond and Karnofsky--Rhodes expansion $\mathsf{Mc}\circ \mathsf{KR}(S,A)$ of the right 
Cayley graph $\mathsf{RCay}(S,A)$ of the semigroup $S$ with generators $A$. In this paper we do not require the details
of these definitions, except that the right Cayley graph as well as its expansions are rooted graphs with root $\mathbbm{1}$.
The Karnofsky--Rhodes expansion is another right Cayley graph, whereas the McCammond expansion is only an 
automata. For the precise definition of the Karnofsky--Rhodes expansion, we refer the reader 
to~\cite[Definition 4.15]{MRS.2011}, \cite[Section 3.4]{MSS.2015}, \cite[Section 2.4]{RhodesSchilling.2019a},
and~\cite[Section 2]{RSS.2020}. For the definition of the McCammond expansion, we refer the reader 
to~\cite[Section 2.7]{MRS.2011} and~\cite[Section 2.5]{RhodesSchilling.2019a}. The Markov chain $\mathcal{M}(S,A)$
is a \defn{lumping}~\cite{LevinPeres.2017} of the Markov chains associated to the expansions.
\item The stationary distributions of the Markov chains associated to the expansions can be expressed using
\defn{loop graphs} $G$, see~\cite{RhodesSchilling.2019a}. A loop graph is a straight line path from $\mathbbm{1}$ to an
endpoint $s$ with directed loops of any finite length attached recursively to any vertex (besides $\mathbbm{1}$ and $s$).
In this way~\cite[Theorem 1.4]{RhodesSchilling.2019a}
\begin{equation}
\label{equation.lumping}
	\Psi_w(x_1,\ldots,x_n) = \sum_G \Psi_G(x_1,\ldots,x_n),
\end{equation}
where the sum is over certain loop graphs $G$ with end point $s$ such that $[s]_S=w$. 
Here~\cite[Definition 1.3]{RhodesSchilling.2019a}
\begin{equation}
\label{equation.PsiG}
	\Psi_G(x_1,\ldots,x_n) = \sum_p \; \prod_{a \in p} x_a,
\end{equation}
where the sum is over all paths $p$ in $G$ starting at $\mathbbm{1}$ and ending in $s$.
\item There is a \defn{Kleene expression} for the set of all paths from $\mathbbm{1}$ to $s$ in $G$. The Kleene expression 
immediately yields a rational expression for the stationary distribution $\Psi_G(x_1,\ldots,x_n)$ and hence 
$\Psi_w(x_1,\ldots,x_n)$ by~\eqref{equation.lumping}.
\end{enumerate}

\begin{remark}
\label{remark.series expansion}
An important property of the above construction is that in the series expansion of the rational expression for
$\Psi_w(x_1,\ldots,x_n)$ (resp. $\Psi_G(x_1,\ldots,x_n)$) the total degree of each term corresponds to the length of
the underlying semaphore code word in~\eqref{equation.Psiw} (resp. the underlying path in $G$ in~\eqref{equation.PsiG}).
\end{remark}

\subsection{Mixing time via truncation of Kleene expressions}
\label{section.mixing truncation}

As stated in Theorem~\ref{theorem.ASST}, $\mathsf{Pr}(\tau \geqslant t)$ provides an upper bound on the mixing time
in the setting that $K(S)$ is left zero. As discussed in Section~\ref{section.semaphore}, $\tau$ can be interpreted
as the random variable given by the length of the semaphore code words or paths in the loop graph. To compute 
$\mathsf{Pr}(\tau \geqslant t)$, one needs to compute the sum of probabilities of all paths of length weakly greater than $t$. 
By Remark~\ref{remark.series expansion}, the length of the paths is given by the total degree in the probability variables
$x_1,\ldots,x_n$ for the generators $a_1,\ldots,a_n$ of the semigroup $S$.
Hence we obtain $\mathsf{Pr}(\tau\geqslant t)$ by truncating the rational function for the stationary distribution to total degree
weakly bigger than $t$.

Let $\Psi^{\geqslant t}_w(x_1,\ldots,x_n)$ be the truncation of the formal power series associated to the rational function
$\Psi_w(x_1,\ldots,x_n)$ to terms of degree weakly bigger than $t$ and let $\Psi_w^{<t}(x_1,\ldots,x_n)$ be the truncation
of the formal power series associated to the rational function $\Psi_w(x_1,\ldots,x_n)$ to terms of degree strictly smaller
than $t$. Note that
\[
	\Psi_w(x_1,\ldots,x_n) = \Psi_w^{<t}(x_1,\ldots,x_n) + \Psi^{\geqslant t}_w(x_1,\ldots,x_n).
\]

\begin{theorem}
\label{theorem.main}
Suppose the Markov chain satisfies the conditions of Theorem~\ref{theorem.ASST}.
If $\Psi_w(x_1,\ldots,x_n)$ is represented by a rational function such that each term of degree $\ell$ in its formal power 
sum expansion corresponds to a semaphore code word $s$ of length $\ell$ with $[s]_S=w$, we have
\[
	\mathsf{Pr}_w(\tau \geqslant t) = \frac{\Psi^{\geqslant t}_w(x_1,\ldots,x_n)}{\Psi_w(x_1,\ldots,x_n)}
	= 1 - \frac{\Psi_w^{<t}(x_1,\ldots,x_n)}{\Psi_w(x_1,\ldots,x_n)}.
\]
\end{theorem}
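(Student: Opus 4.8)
The plan is to read $\mathsf{Pr}_w(\tau\geqslant t)$ as the conditional probability that the walk enters the minimal ideal only at time $\geqslant t$, \emph{given} that the element of $K(S)$ into which it is absorbed is $w$, and then to identify the numerator and the denominator of this conditional probability with the truncated and the full power series of $\Psi_w$. Recall from Section~\ref{section.semaphore} the dictionary between $K(S)$ and the semaphore code $\mathcal{S}=\mathcal{S}(S,A)$: the walk reads an i.i.d.\ stream of letters $a_1,a_2,\ldots\in A$ with $\mathsf{Pr}(a_i=a)=x_a$, the absorption time $\tau$ is the first $t$ with $a_1\cdots a_t\in\mathcal{S}A^\star$, and because $\mathcal{S}$ is a prefix code this forces $s:=a_1\cdots a_\tau$ to be the unique semaphore code word that is a prefix of the infinite word read, with $\tau=|s|$ exactly and $[s]_S\in K(S)$ the absorbing element. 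By independence of the letters, for any fixed $s\in\mathcal{S}$ the probability that the walk produces $s$ equals $\prod_{a\in s}x_a$.

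First I would assemble the two sums this yields. Summing over all $s\in\mathcal{S}$ with $[s]_S=w$ and using Theorem~\ref{theorem.stationary},
\[
	\mathsf{Pr}(\text{walk absorbed at }w)=\sum_{\stackrel{s\in\mathcal{S}}{[s]_S=w}}\prod_{a\in s}x_a=\Psi_w(x_1,\ldots,x_n),
\]
which is strictly positive since the Markov chain on $K(S)$ is ergodic, so that conditioning on $w$ is legitimate; note also that this sum converges (to a rational function) at the physical probabilities because $\sum_{a}x_a=1$ and the walk enters the ideal almost surely. Restricting the same sum to code words of length $\geqslant t$ gives
\[
	\mathsf{Pr}(\tau\geqslant t,\ \text{walk absorbed at }w)=\sum_{\stackrel{s\in\mathcal{S},\ [s]_S=w}{|s|\geqslant t}}\prod_{a\in s}x_a.
\]
Here the hypothesis of the theorem — equivalently, Remark~\ref{remark.series expansion} — is exactly what is needed: since the degree-$\ell$ homogeneous part of the formal power series of $\Psi_w$ is $\sum_{s\in\mathcal{S},\,[s]_S=w,\,|s|=\ell}\prod_{a\in s}x_a$, the right-hand side above is precisely $\Psi^{\geqslant t}_w(x_1,\ldots,x_n)$. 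Therefore
\[
	\mathsf{Pr}_w(\tau\geqslant t)=\frac{\mathsf{Pr}(\tau\geqslant t,\ \text{absorbed at }w)}{\mathsf{Pr}(\text{absorbed at }w)}=\frac{\Psi^{\geqslant t}_w(x_1,\ldots,x_n)}{\Psi_w(x_1,\ldots,x_n)},
\]
and the second equality in the statement is obtained by substituting $\Psi_w=\Psi^{<t}_w+\Psi^{\geqslant t}_w$ and dividing by $\Psi_w$.

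I expect the only real subtlety — and the point worth spelling out — is the bookkeeping that the formal power series of the chosen rational representative of $\Psi_w$ records semaphore code word length degree-by-degree (so that truncation by total degree really corresponds to truncation by $\tau$), together with the harmless analytic remark that this series converges at the physical probability vector. Once the semaphore code model of Section~\ref{section.semaphore} is granted, the remaining ingredients — the i.i.d.\ letter dynamics, the prefix-code property forcing $\tau=|s|$, Theorem~\ref{theorem.stationary}, and the definition of conditional probability — are routine.
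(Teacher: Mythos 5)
Your proposal is correct and follows essentially the same route the paper takes (the paper's justification is the paragraph preceding the theorem: $\tau$ equals the length of the semaphore code word, terms of total degree $\ell$ in the series correspond to code words of length $\ell$, so the degree-$\geqslant t$ truncation collects exactly the probabilities of absorption at $w$ in time $\geqslant t$, and dividing by $\Psi_w$ normalizes to the conditional probability). Your explicit framing of $\mathsf{Pr}_w$ as a conditional probability and the remark that absorption occurs almost surely are welcome clarifications of points the paper leaves implicit, but they do not change the argument.
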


For each $w\in K(S)$, we can also give an explicit formula for the expected number of steps $E_w[\tau]$ it takes to reach
the endpoint of $w$ using the Cauchy--Euler operator.

\begin{theorem}
\label{theorem.main1}
Suppose the Markov chain satisfies the conditions of Theorem~\ref{theorem.ASST}.
If $\Psi_w(x_1,\ldots,x_n)$ is represented by a rational function such that each term of degree $\ell$ in its formal power 
sum expansion corresponds to a semaphore code word $s$ of length $\ell$ with $[s]_S=w$, we have
\[
	E_w[\tau] = \left( \sum_{i=1}^n x_i \frac{\partial}{\partial x_i} \right) \ln \Psi_w(x_1,\ldots,x_n).
\]
\end{theorem}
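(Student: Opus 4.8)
The plan is to exploit the combinatorial interpretation of $\Psi_w$ provided by Theorem~\ref{theorem.stationary} together with Remark~\ref{remark.series expansion}. Write $\Psi_w(x_1,\ldots,x_n) = \sum_{\ell \geqslant 0} p_\ell$, where $p_\ell$ is the sum of $\prod_{a\in s} x_a$ over all semaphore code words $s$ with $[s]_S = w$ and $|s| = \ell$; by Remark~\ref{remark.series expansion} this is precisely the degree-$\ell$ homogeneous component of the formal power series. The key elementary observation is that the Cauchy--Euler (homogeneity) operator $\mathsf{E} := \sum_{i=1}^n x_i\,\partial/\partial x_i$ acts on a homogeneous polynomial of degree $\ell$ by multiplication by $\ell$, so $\mathsf{E}\,\Psi_w = \sum_{\ell\geqslant 0} \ell\, p_\ell$. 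Dividing by $\Psi_w$ gives $\mathsf{E}(\ln \Psi_w) = (\mathsf{E}\,\Psi_w)/\Psi_w = \sum_\ell \ell\, p_\ell \big/ \sum_\ell p_\ell$, which is exactly the expectation of the length $\tau = |s|$ under the probability measure that assigns mass $\prod_{a\in s}x_a / \Psi_w$ to each code word $s$ with $[s]_S = w$.

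The steps I would carry out, in order, are: (1) recall from Section~\ref{section.semaphore} and Theorem~\ref{theorem.main} that, conditioned on absorption at $w \in K(S)$, the random variable $\tau$ has distribution $\mathsf{Pr}_w(\tau = \ell) = p_\ell / \Psi_w$, so that $E_w[\tau] = \sum_{\ell\geqslant 0}\ell\,p_\ell/\Psi_w$ (here one should note $\Psi_w$ evaluated at the actual probabilities equals the total probability of absorption at $w$, which normalizes the measure; when $K(S)$ is left zero and the chain is as in Theorem~\ref{theorem.ASST}, $\tau$ is almost surely finite so the series converges); (2) verify the operator identity $\mathsf{E}(fg) = (\mathsf{E}f)g + f(\mathsf{E}g)$ and hence $\mathsf{E}(\ln f) = (\mathsf{E}f)/f$ for $f=\Psi_w$, which is valid on the level of formal power series and, after evaluation, wherever $\Psi_w \neq 0$; (3) compute $\mathsf{E}\,\Psi_w$ termwise, using that $\mathsf{E}$ applied to the monomial $\prod_{a\in s}x_a$ (a squarefree-by-multiset monomial of total degree $|s|$) returns $|s|\prod_{a\in s}x_a$ — this is just Euler's identity for homogeneous functions applied component-by-component; (4) assemble: $\mathsf{E}(\ln\Psi_w) = \sum_s |s|\prod_{a\in s}x_a \big/ \Psi_w = \sum_\ell \ell\,p_\ell/\Psi_w = E_w[\tau]$.

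The only genuinely delicate point is the passage from formal power series to an identity of rational functions evaluated at the numerical probabilities $x_a$: one must justify that the termwise action of $\mathsf{E}$ on the infinite series agrees with applying $\mathsf{E}$ to the closed-form rational expression, and that $\ln\Psi_w$ and its derivative are well defined at the point in question. Since $\Psi_w$ is a rational function whose power series has nonnegative coefficients summing to the (positive) absorption probability at $w$, we have $\Psi_w > 0$ there, and differentiation of a rational function commutes with its power series expansion on the domain of convergence; the convergence of $\sum_\ell \ell\, p_\ell$ at the numerical point is guaranteed because $\tau$ has finite expectation under the hypotheses of Theorem~\ref{theorem.ASST} (the minimal ideal is reached in finite expected time for an ergodic absorbing chain). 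Modulo these standard analytic remarks, the proof is the two-line computation in step (4), and I would present it essentially in that compressed form.
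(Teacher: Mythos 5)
Your proposal is correct and follows essentially the same route as the paper's proof: apply the identity $\bigl(\sum_i x_i \partial_{x_i}\bigr)\ln \Psi_w = \bigl(\sum_i x_i \partial_{x_i}\bigr)\Psi_w / \Psi_w$, note that the Cauchy--Euler operator multiplies each degree-$\ell$ monomial by $\ell$, and identify the resulting ratio with the expectation of the semaphore code word length weighted by the normalized probabilities. The only difference is cosmetic (you group terms by total degree rather than by multidegree, and you spell out the analytic justifications that the paper handles with the word ``formally'').
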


\begin{remark}
Note that the formal expression for $\Psi_w(x_1,\ldots,x_n)$ cannot be manipulated using that $x_1+\cdots+x_n=1$
when using Theorems~\ref{theorem.main} and~\ref{theorem.main1}.
\end{remark}

\begin{proof}[Proof of Theorem~\ref{theorem.main1}]
Let the formal power sum expression for the rational function $\Psi_w(x_1,\ldots,x_n)$ be as follows
\[
	\Psi_w(x_1,\ldots,x_n) = \sum_{m_1,\ldots,m_n\geqslant 0} c_{m_1,\ldots,m_n} x_1^{m_1}\cdots x_n^{m_n}.
\]
Then formally
\begin{multline*}
	\left( \sum_{i=1}^n x_i \frac{\partial}{\partial x_i} \right) \ln \Psi_w(x_1,\ldots,x_n)
	= \frac{\left( \sum_{i=1}^n x_i \frac{\partial}{\partial x_i} \right) \Psi_w(x_1,\ldots,x_n)}{\Psi_w(x_1,\ldots,x_n)}\\
	= \frac{\sum_{m_1,\ldots,m_n\geqslant 0} c_{m_1,\ldots,m_n}(m_1+\cdots+m_n) x_1^{m_1} \cdots x_n^{m_n}}
	{\sum_{m_1,\ldots,m_n\geqslant 0} c_{m_1,\ldots,m_n} x_1^{m_1} \cdots x_n^{m_n}}.
\end{multline*}
Note that a term $x_1^{m_1} \cdots x_n^{m_n}$ of degree $m_1+\cdots+m_n$ corresponds to a semaphore code word
of length $m_1+\cdots+m_n$. Hence $c_{m_1,\ldots,m_n} (m_1+\cdots+m_n) 
x_1^{m_1}\cdots x_n^{m_n}/ \Psi_w(x_1,\ldots,x_n)$
is the length of the path times the probability of having taken a path with $m_i$ steps along the $i$-th generator.
The sum over all such terms is precisely $E_w[\tau]$.
\end{proof}

\begin{remark}
Let $\mathsf{Pr}_G(\tau \geqslant t)$ be the probability that the length of the paths in the loop graph $G$ from
$\mathbbm{1}$ to the end point $s$ is weakly bigger than $t$. Then by analogous argument as above, we also have
\begin{equation}
\label{equation.PrG}
	\mathsf{Pr}_G(\tau \geqslant t) = \frac{\Psi^{\geqslant t}_G(x_1,\ldots,x_n)}{\Psi_G(x_1,\ldots,x_n)}
	= 1 - \frac{\Psi_G^{<t}(x_1,\ldots,x_n)}{\Psi_G(x_1,\ldots,x_n)}
\end{equation}
and
\begin{equation}
\label{equation.EG}
	E_G[\tau] = \left( \sum_{i=1}^n x_i \frac{\partial}{\partial x_i} \right) \ln \Psi_G(x_1,\ldots,x_n).
\end{equation}
\end{remark}

\begin{example}[Single loop]
\label{example.single loop}
Suppose the path in the loop graph $G$ from $\mathbbm{1}$ to the ideal is a straight line with a single loop
\begin{center}
\begin{tikzpicture}[auto]
\node (A) at (0, 0) {$\mathbbm{1}$};
\node (B) at (1.5,0) {$r$};
\node(C) at (3,0) {$s$};
\draw[edge,thick] (A) -- (B);
\draw[edge,thick] (B) -- (C);
\path (B) edge [thick,loop]  (B);
\end{tikzpicture}
\end{center}
where the loop is taken with probability $p$ and the step to the ideal $r\to s$ with probability $1-p$. Then the probability that
one starts at $\mathbbm{1}$ and hits the element $s$ in the ideal in precisely $t$ steps is
\[
	\mathsf{Pr}_G(\tau=t) = (1-p)p^{t-2} \qquad \text{for $t\geqslant 2$.}
\]
Hence
\begin{equation}
\label{equation.Pr example}
	\mathsf{Pr}_G(\tau \geqslant t) = \sum_{j=t}^\infty \mathsf{Pr}_G(\tau=j)
	= (1-p) p^{t-2} \sum_{j=0}^\infty p^j
	= (1-p) p^{t-2} \frac{1}{1-p} = p^{t-2} \quad \text{for $t\geqslant 2$.}
\end{equation}
The expectation value is
\begin{equation}
\label{equation.E example}
	E_G[\tau] = \sum_{t=1}^\infty \mathsf{Pr}_G(\tau\geqslant t) = 1+ \sum_{t=2}^\infty p^{t-2} = 1+ \frac{1}{1-p}.
\end{equation}
Indeed by Markov's inequality
\[
	tp^{t-2} \leqslant 1+\frac{1}{1-p} \quad \text{for all $t\geqslant 2$.}
\]

Now let us use~\eqref{equation.PrG} to compute $\mathsf{Pr}_G(\tau\geqslant t)$. Suppose that the
step $\mathbbm{1}\to r$ is labelled by the generator $1$, the loop from $r$ to $r$ is labelled $2$, and the step
$r\to s$ is labelled $3$. Then the Kleene expression for the paths from $\mathbbm{1}$ to $s$ is
\[
	12^\star 3.
\]
Let the probability for generator $i$ be $x_i$ for $i\in \{1,2,3\}$. Then by~\cite{RhodesSchilling.2019}
\[
	\Psi_G(x_1,x_2,x_3) = \frac{x_1x_3}{1-x_2} = x_1 x_3 \sum_{j=0}^\infty x_2^j.
\]
By~\eqref{equation.PrG}, we obtain $\mathsf{Pr}_G(\tau\geqslant t)=1$ for $t=0,1$ and
\[
	\mathsf{Pr}_G(\tau\geqslant t) = \frac{x_1 x_3 \sum_{j=t-2}^\infty x_2^j}{x_1 x_3 \sum_{j=0}^\infty x_2^j}
	= x_2^{t-2} \qquad \text{for $t\geqslant 2$.}
\]
This agrees with~\eqref{equation.Pr example}, where $x_2=p$.

Next let us use~\eqref{equation.EG} to compute $E_G[\tau]$
\[
	E_G[\tau] = \left(x_1\frac{\partial}{\partial x_1} + x_2\frac{\partial}{\partial x_2} + x_3\frac{\partial}{\partial x_3}\right)
	\ln \Psi_G(x_1,x_2,x_3)
	= 2 + \frac{x_2}{1-x_2} = 1+\frac{1}{1-x_2},
\]
which agrees with~\eqref{equation.E example} when $x_2=p$.
\end{example}

\subsection{Shannon entropy and exponential bounds}
\label{section.shannon}

It turns out that the mixing time has close ties to information theory and in particular Shannon's entropy.
See~\cite{Rioul.2018} and~\cite[Chapter 3]{Gray.2011} as references on information theory.

Let $X$ be a random variable with probability distribution $p(x)$. 
The amount of information of an elementary event $x$ is $\log \frac{1}{p(x)}$.
Therefore, the average amount of information about $X$ is given by the expected value,
known as \defn{Shannon's entropy}
\begin{equation}
	H(X) = E[\log \frac{1}{p}] = \sum_{x \in X} p(x) \log \frac{1}{p(x)}.
\end{equation}

Shannon's entropy features in the \defn{asymptotic equipartition property} or \defn{entropy ergodic theorem}, 
which can be stated as follows~\cite{Shannon.1948} (see also~\cite{Rioul.2018}). Let ${\bf x}=(x_1,\ldots,x_t)$ be a long 
sequence of independent and identically distributed outcomes with probability distribution $p(x)$.
By the independence, $p({\bf x})$ is given by the product
\[
	p({\bf x}) = p(x_1) p(x_2) \cdots p(x_t) = \prod_{x \in X} p(x)^{t(x)},
\]
where $t(x)$ is the number of $x_i$ equal to $x$. Since $t$ is large, by the law of large numbers
\[
	\frac{t(x)}{t} \approx p(x),
\]
which implies
\begin{equation}
	p({\bf x}) \approx \Bigl( \prod_{x \in X} p(x)^{p(x)} \Bigr)^t = e^{-t H(X)}.
\end{equation}
In other words, for very large (but fixed) $t$, the value of the probability of a given ``typical'' sequence 
${\bf x} = (x_1, x_2, \ldots, x_t)$ is likely to be close to the constant $e^{-t H(X)}$.

The precise formulation of the asymptotic equipartition property is the \defn{Shannon--McMillan--Breiman 
Theorem}~\cite{Shannon.1948, McMillam.1953, Breiman.1957} (see 
also~\cite[Chapter 4]{Gray.2011}). Applied to $P^{\star t}(S \setminus K(S))$ in Theorem~\ref{theorem.ASST},
this gives an exponential bound on $\| T^t \nu - \Psi \|$. In probability, this is also known as 
the \defn{Convergence Theorem} (see~\cite[Theorem 4.9]{LevinPeres.2017}).

\begin{theorem}[Convergence Theorem]
Suppose $T$ is the transition matrix of an ergodic Markov chain with stationary distribution $\Psi$.
Then there exist constants $\alpha \in (0,1)$ and $C>0$ such that
\[
	\| T^t \nu - \Psi \| \leqslant C \alpha^t.
\]
\end{theorem}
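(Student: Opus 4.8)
The plan is to reduce the claim to a single quantitative ingredient: that some fixed power of $T$ contracts the total variation distance to $\Psi$ by a constant factor. Two elementary observations set this up. First, since $T\Psi=\Psi$ and $T$ is a stochastic matrix, total variation distance is non-expansive under $T$, so $\|T^{t+1}\nu-\Psi\|=\|T(T^t\nu)-T\Psi\|\leqslant\|T^t\nu-\Psi\|$; hence $t\mapsto\|T^t\nu-\Psi\|$ is non-increasing. Second, it therefore suffices to find an integer $t_0\geqslant 1$ and $\theta\in(0,1)$ with $\|T^{t_0}\mu-\Psi\|\leqslant\theta\,\|\mu-\Psi\|$ for \emph{every} probability distribution $\mu$: iterating gives $\|T^{kt_0}\nu-\Psi\|\leqslant\theta^{k}$, and for general $t=kt_0+r$ with $0\leqslant r<t_0$ monotonicity gives $\|T^t\nu-\Psi\|\leqslant\theta^{k}\leqslant\theta^{-1}\alpha^{t}$ with $\alpha=\theta^{1/t_0}$, so the theorem holds with $C=\theta^{-1}$.

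To produce $t_0$ and $\theta$ I would use ergodicity. Since the chain is irreducible and aperiodic, the transition matrix is primitive, so a standard Perron--Frobenius/Wielandt argument gives an integer $t_0$ for which every entry of $T^{t_0}$ is positive; put $\delta=\min_{x,y}T^{t_0}(x,y)>0$. Since every entry of $\Psi$ is at most $1$, we have $T^{t_0}(x,y)\geqslant\delta\geqslant\delta\,\Psi_y$, which lets us write $T^{t_0}=\delta\,\Pi+(1-\delta)\,Q$, where $\Pi$ is the rank-one stochastic matrix with $\Pi\mu=\Psi$ for every distribution $\mu$ and $Q=(1-\delta)^{-1}(T^{t_0}-\delta\,\Pi)$ is again a stochastic matrix. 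Applying this to a distribution $\mu$ and subtracting the fixed-point identity $\Psi=T^{t_0}\Psi=\delta\,\Psi+(1-\delta)\,Q\Psi$ gives $T^{t_0}\mu-\Psi=(1-\delta)\,Q(\mu-\Psi)$; since a stochastic matrix is a contraction for total variation on signed measures of total mass zero, $\|T^{t_0}\mu-\Psi\|\leqslant(1-\delta)\,\|\mu-\Psi\|$, so $\theta=1-\delta$ works. (If $\delta=1$ then $T^{t_0}\mu=\Psi$ for all $\mu$ and the conclusion is immediate.)

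An alternative route, closer in spirit to the rest of the paper, is available in the semigroup setting where $K(S)$ is left zero: by Theorem~\ref{theorem.ASST}, $\|T^t\nu-\Psi\|\leqslant\mathsf{Pr}(\tau>t)=P^{\star t}(S\setminus K(S))$, so it is enough to show that $\mathsf{Pr}(\tau>t)$ decays geometrically. Fix a word $v$ over $A$ with $[v]_S\in K(S)$ --- one exists because $A$ generates $S$ and $K(S)\subseteq S$ --- and set $w=|v|$; since $K(S)$ is a two-sided ideal, reading $v$ from any position drives the walk into $K(S)$, and the probability of reading $v$ during the next $w$ steps is at least $\eta=\prod_{a\in v}x_a>0$, uniformly in the current state. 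Hence $\mathsf{Pr}(\tau>t+w)\leqslant(1-\eta)\,\mathsf{Pr}(\tau>t)$ and so $\mathsf{Pr}(\tau>kw)\leqslant(1-\eta)^{k}$, which is the asserted exponential bound; this is exactly the finite-state instance of the Shannon--McMillan--Breiman estimate applied to the ``bad'' set $S\setminus K(S)$ referred to above.

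The only genuinely non-elementary input is the existence of a strictly positive power of an irreducible aperiodic stochastic matrix --- equivalently, in the second argument, the uniform lower bound on the absorption probability over a window of bounded length; everything else is the triangle inequality, non-expansiveness of a Markov kernel in total variation, and the bookkeeping needed to pass from the $t_0$-step (resp.\ $w$-step) estimate to all $t$. I expect the main point requiring care to be keeping the contraction estimate uniform over all initial distributions $\nu$ and extracting clean absolute constants $\alpha\in(0,1)$ and $C>0$ from the per-$t_0$ bound.
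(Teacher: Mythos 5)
Your first argument is correct and is essentially the standard Doeblin--minorization proof of \cite[Theorem 4.9]{LevinPeres.2017}, which is exactly the reference the paper cites for this statement (the paper gives no proof of its own). The only things to tidy are cosmetic: with the paper's convention $T\Psi=\Psi$ the matrix is column-stochastic, so the minorization should read $T^{t_0}(x,y)\geqslant\delta\,\Psi_x$ with $\Pi_{x,y}=\Psi_x$, and your appeal to ``contraction'' is really non-expansiveness of a stochastic matrix on mass-zero signed measures --- both of which your argument already accommodates. Note that your second, semigroup-flavoured route only establishes the bound in the special case where $K(S)$ is left zero, not for a general ergodic chain as the theorem asserts, so it should be presented as an illustration rather than an alternative proof.
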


A concept related to entropy is the \defn{entropy rate}. It is defined as the rate of information innovation
\[
	H' = \lim_{t\to \infty} H(X_t \mid X_{t-1},\ldots, X_1).
\]
When $X_i$ is stationary, the entropy rate is equal to the average entropy per symbol
\[
	\overline{H} = \lim_{t\to \infty} \frac{H(X_1,\ldots,X_t)}{t},
\]
that is $H' = \overline{H}$.

Since an ergodic Markov chain has a unique stationary distribution $\Psi$, the entropy rate is independent of the initial
distribution. If the Markov chain is defined on the finite (or countable) state space $\Omega$, then
\[
	H' = - \sum_{s,s'\in \Omega} T_{s,s'} \Psi_{s'} \log(T_{s,s'}).
\]
A simple consequence of this definition is that indeed a stochastic process with independent and identically distributed 
random variables has an entropy rate that is the same as the entropy of any individual member of the process. 

\subsection{Mixing time via decreasing statistics}
\label{section.decreasing statistics}

In~\cite{ASST.2015,ASST.2015a}, a technique was developed for an upper bound on the mixing time
using a decreasing statistics on the semigroup underlying the Markov chain.

\begin{lemma} \cite[Lemma 3.6]{ASST.2015}
\label{lemma.statisticbound}
Let $\mathcal M$ be an irreducible Markov chain associated to the semigroup $S$ and probability distribution
$0\leqslant p(s) \leqslant 1$ for $s\in S$. We assume that $\{s \in S \mid p(s)>0\}$ generates $S$.
Let $\Psi$ be the stationary distribution and $f\colon S\to \mathbb N$ be a function, 
called a \defn{statistic}, such that:
\begin{enumerate}
\item $f(ss')\leqslant f(s)$ for all $s,s'\in S$;
\item if $f(s)>0$, then there exists $s' \in S$ with $p(s')>0$ such that $f(ss')<f(s)$;
\item $f(s)=0$ if and only if $s \in K(S)$.
\end{enumerate}
Then if $p=\min\{p(s) \mid s \in S, p(s)>0\}$ and $L=f(\mathbbm{1})$, we have that
\[
  \|T^t\nu -\Psi\|_{TV} \leqslant \sum_{i=0}^{L-1} {t\choose i}p^i(1-p)^{t-i}
  \leqslant \exp\left(-\frac{(tp-(L-1))^2}{2tp}\right)\,,
  \]
for any probability distribution $\nu$ on $S$, where the last inequality holds as long as $t\geqslant (L-1)/p$.
\end{lemma}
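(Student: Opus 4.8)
The plan is to prove Lemma~\ref{lemma.statisticbound} by reducing it to a one-dimensional random walk argument, using the statistic $f$ to control how quickly the chain is absorbed into $K(S)$. First I would recall from Theorem~\ref{theorem.ASST} (in the version valid for irreducible chains, using \cite[Corollary 3.5(3)]{ASST.2015}) that $\|T^t\nu-\Psi\|_{TV}\leqslant \mathsf{Pr}(\tau>t)$, where $\tau$ is the absorption time into $K(S)$; so it suffices to bound $\mathsf{Pr}(\tau>t)$. The idea is to run the chain as a random product $s_1 s_2\cdots$ of generators drawn i.i.d.\ from $p$, and track the value of the statistic along partial products $w_k=s_1\cdots s_k$. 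By property~(1), $f(w_k)$ is nonincreasing in $k$; by property~(3), absorption into $K(S)$ happens exactly when $f(w_k)$ first reaches $0$; and $f(w_0)=f(\mathbbm{1})=L$.

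The key step is to couple the decrease of $f(w_k)$ with a sum of independent Bernoulli variables. By property~(2), whenever $f(w_k)>0$ there is \emph{some} generator $s'$ with $p(s')>0$ such that right-multiplication strictly decreases $f$; since $p\geqslant \min\{p(s):p(s)>0\}$, at each step there is probability at least $p$ of a "successful" decrease, independently of the past (here one must be slightly careful: the particular $s'$ that works depends on $w_k$, but since $p(s')\geqslant p$ we may stochastically dominate the decrease process by a sequence of i.i.d.\ Bernoulli$(p)$ trials). Since $f$ starts at $L$ and each success lowers it by at least $1$, after $L$ successes we are absorbed. Hence $\mathsf{Pr}(\tau>t)\leqslant \mathsf{Pr}(\text{fewer than }L\text{ successes in }t\text{ trials}) = \mathsf{Pr}(\mathrm{Bin}(t,p)\leqslant L-1) = \sum_{i=0}^{L-1}\binom{t}{i}p^i(1-p)^{t-i}$, which is the first inequality.

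For the second inequality I would apply a standard Chernoff/Hoeffding bound for the lower tail of a binomial: if $B\sim\mathrm{Bin}(t,p)$ then $\mathsf{Pr}(B\leqslant L-1)\leqslant \exp\!\left(-\frac{(tp-(L-1))^2}{2tp}\right)$, valid when $L-1\leqslant tp$, i.e.\ $t\geqslant (L-1)/p$. This is exactly the relative-entropy or Bernstein-type estimate for the deviation of $B$ below its mean $tp$ by the amount $tp-(L-1)$; it can be quoted directly from, e.g., \cite{DevroyeLugosi.2001} or derived from the Chernoff method of Section~\ref{section.chernoff} applied to $-B$.

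The main obstacle I anticipate is making the stochastic-domination step fully rigorous, since the "good" generator $s'$ in property~(2) is not fixed but depends on the current partial product $w_k$, so the decrease increments are not literally i.i.d.\ Bernoulli$(p)$ variables. The clean way around this is to construct the coupling on the product space: at step $k$, first flip an independent coin with success probability $p$; if it comes up heads, force the next generator to be the $s'=s'(w_k)$ guaranteed by property~(2) (re-weighting the remaining mass appropriately), and if tails pick a generator from the conditional distribution. Under this coupling $f(w_k)$ decreases by at least the number of heads so far, so $\tau$ is dominated by the waiting time for the $L$-th head, which is exactly a negative-binomial / sum-of-Bernoulli quantity; this yields the binomial tail bound. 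The rest is routine, and the exponential bound follows from the cited concentration inequality.
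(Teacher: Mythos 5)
Your argument is correct and is essentially the standard proof of this lemma from the cited source \cite[Lemma 3.6]{ASST.2015} (the present paper only quotes the statement without reproving it): bound $\|T^t\nu-\Psi\|_{TV}$ by $\mathsf{Pr}(\tau>t)$, dominate the number of $f$-decreasing steps from below by a $\mathrm{Bin}(t,p)$ variable via the conditional-probability-at-least-$p$ coupling you describe, and finish with the Chernoff--Hoeffding lower-tail bound. Your flagged subtlety (the good generator depends on the current partial product) and your resolution of it are exactly right and match the intended argument.
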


The bound 
\[
	\sum_{i=0}^{L-1} {t\choose i}p^i(1-p)^{t-i} \leqslant \exp\left(-\frac{(tp-(L-1))^2}{2tp}\right)
\]
works well for $p$ close to $\frac{1}{2}$. A better bound for $0<\frac{L-1}{t}<p$ is given by~\cite{AG.1989}
\[
	\sum_{i=0}^{L-1} {t\choose i}p^i(1-p)^{t-i} \leqslant \exp\left( -t\; D\Bigl(\frac{L-1}{t} \;\Big\|\; p\Bigr) \right),
\]
where
\[
	D(a\; \| \;p) = a \log \frac{a}{p} + (1-a) \log\frac{1-a}{1-p}.
\]
This can be rewritten as
\[
	\sum_{i=0}^{L-1} {t\choose i}p^i(1-p)^{t-i} \leqslant \left( \frac{p}{a} \right)^{ta} \left( \frac{1-p}{1-a} \right)^{t(1-a)},
\]
where $a=\frac{L-1}{t}$.

\subsection{Syntactic at $0$}
\label{section.syntactic}

Syntactic monoids were introduced in mathematics and computer science as the smallest monoid that
recognizes a given formal language, see for example~\cite{Straubing.1994}. Here we develop this idea in the context 
of the mixing time.

Recall that for a semigroup $S$, denote by $S^{\mathbbm{1}}$ the semigroup $S$ with a new added identity 
$\mathbbm{1}$ (even if a one already exists).

\begin{definition}
Let $S$ be a semigroup with zero $0$. Define the congruence on $s_1,s_2 \in S$ by
\begin{equation}
\label{equation.congruence}
	s_1 \equiv s_2 \qquad \text{if and only if} \qquad
	\Bigl( \text{for any $x, y \in S^{\mathbbm{1}}$} \quad  x s_1 y = 0  \Longleftrightarrow x s_2 y =0 \Bigr).
\end{equation}
Then $S$ is called \defn{syntactic at zero} if the congruence~\eqref{equation.congruence} has singleton classes,
that is, 
\[
	S/\equiv \quad \cong \quad S.
\]
We call $S/\equiv$ the \defn{syntactic image} of $S$, which is syntactic at zero. In other words, the syntactic 
semigroup associated to $S$ is the smallest image under the homomorphism $f \colon S \to S/\equiv$ such that 
$f^{-1}(0)=0$.
\end{definition}

\begin{example}
\label{example.min}
Consider the semigroup $S=\{0,1,2,\ldots,n\}$, where multiplication is taking the minimum. The $\equiv$-classes
are given by $\{1,2,\ldots,n\}$ and $\{0\}$. Hence, the syntactic semigroup $S/\equiv$ associated to $S$ is 
isomorphic to $\{0,1\}$ with multiplication being minimum.
\end{example}

\begin{example}
\label{example.rees}
The \defn{Rees matrix semigroup} $(S;I,I';P)$ is indexed by a semigroup $S$, two non-empty sets $I$ and $I'$, and
a matrix $P$ indexed by $I$ and $I'$ with entries $p_{i',i}\in S$ (see for example~\cite[Section 3.4]{RhodesSchilling.2019}).
It is the set $I\times S\times I'$ with multiplication
\[
	(i,s,i')(j,t,j')=(i,sp_{i',j} t,j').	
\]
The \defn{Rees matrix semigroup with zero} $(S;I,I';P)^\square$ is the set  $I\times S\times I' \cup \{\square\}$,
where the entries in $P$ are in $S\cup \{\square\}$, with multiplication
\[
	(i,s,i')(j,t,j')=\begin{cases}
	(i,sp_{i',j} t,j') & \text{if $p_{i',j} \neq \square$,}\\
	\square & \text{otherwise.}
	\end{cases}
\]
Then the syntactic image of $(S;I,I';P)^\square$ is isomorphic to $(\{1\};\tilde{I},\tilde{I}';\tilde{P})$, where
$\tilde{P}$ is a matrix of $0$ and $1$ without equal rows or columns.
\end{example}

It turns out that we can replace a semigroup with zero with its syntactic image without changing the upper bound on
the mixing time of the underlying Markov chain, but the stationary distribution can change.

\begin{theorem}
\label{theorem.syntactic}
Let $(S,A)$ be a finite semigroup $S$ with zero and generators $A$, whose minimal ideal $K(S)$ is a left zero semigroup.
Then the Markov chains associated to $(S,A)$ and $(S/\equiv,f(A))$ have the same upper bound $\mathsf{Pr}(\tau>t)$ on the mixing time.
\end{theorem}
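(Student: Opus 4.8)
The plan is to show that the random variable $\tau$ — the absorption time into the minimal ideal — has the same distribution for $(S,A)$ and for $(S/\equiv, f(A))$, since by Theorem~\ref{theorem.ASST} the upper bound on the mixing time is $\mathsf{Pr}(\tau > t)$ and depends only on the law of $\tau$. By Section~\ref{section.semaphore}, $\tau$ is the length of a semaphore code word, where the semaphore code is the one associated (via the bijection of~\cite{RSS.2016}) to the ideal $I \subseteq A^+$ consisting of all words mapping into $K(S)$. So the real content is: the ideal of words that hit the minimal ideal is \emph{unchanged} when we pass to the syntactic image. First I would make precise the relevant ideal. Let $\pi\colon A^+ \to S$ be the canonical surjection and let $I_S = \pi^{-1}(K(S))$; likewise let $\bar\pi\colon A^+ \to S/\equiv$ and $I_{S/\equiv} = \bar\pi^{-1}(K(S/\equiv))$. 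Since $f\colon S \to S/\equiv$ is a surjective homomorphism with $\bar\pi = f \circ \pi$, and surjective homomorphisms send the minimal ideal onto the minimal ideal (a standard fact: $f(K(S))$ is an ideal of $S/\equiv$, hence contains $K(S/\equiv)$, and $f^{-1}(K(S/\equiv))$ is an ideal of $S$ meeting... giving the reverse inclusion), it suffices to check that $f^{-1}(K(S/\equiv)) = K(S)$; this would give $I_S = I_{S/\equiv}$ and hence identical semaphore codes and identical laws for $\tau$.

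The key step is therefore: for a semigroup $S$ with zero whose minimal ideal is left zero, $f^{-1}(K(S/\equiv)) = K(S)$, where $f$ is the syntactic-at-zero quotient map. I would argue as follows. A semigroup with zero has $K(S) = \{0\}$ unless... — in fact, if $S$ has a zero element $0$, then $\{0\}$ is an ideal, so $K(S) = \{0\}$. Wait: that forces $K(S)$ to be the one-element left zero semigroup, which is consistent with the hypothesis. So the hypothesis "$K(S)$ left zero" together with "$S$ has a zero" simply means $K(S) = \{0\}$. Now $f$ is by construction the quotient by the congruence $\equiv$ of~\eqref{equation.congruence}, and the defining property recorded in the Definition is precisely that $f^{-1}(0) = 0$, i.e. $f^{-1}(K(S/\equiv)) = f^{-1}(\{0\}) = \{0\} = K(S)$ (one must note $0 \in S/\equiv$ is the zero and equals $K(S/\equiv)$ since $S/\equiv$ also has a zero). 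So the preimage statement is immediate from the definition of syntactic at zero, and the main work is the bookkeeping that $\bar\pi^{-1}(0) = \pi^{-1}(0)$ as subsets of $A^+$, which follows from $\bar\pi = f\circ\pi$ and $f^{-1}(0)=\{0\}$.

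With $I_S = I_{S/\equiv}$ established, the two Markov chains are driven by the same semaphore code $\mathcal{S}$ with the same probabilities $x_a$ assigned to letters $a\in A$ (noting $f$ is defined on all of $A$, so the generating probabilities transfer verbatim), hence $\tau$ has the same distribution in both cases, and by Theorem~\ref{theorem.ASST} the upper bounds $\mathsf{Pr}(\tau > t)$ coincide. The remark that the stationary distribution itself can change is consistent with this: by Theorem~\ref{theorem.stationary}, $\Psi_w$ is a sum over semaphore words with $[s]_S = w$, and collapsing $\equiv$-classes can merge several states $w$ into one, redistributing mass — but the total over all $w$, and in particular the degree-stratified totals that control $\tau$, are unaffected. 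The step I expect to be the main (minor) obstacle is purely expository: carefully justifying that "$S$ with zero and $K(S)$ left zero" reduces to "$K(S) = \{0\}$", and making sure the bijection of~\cite{RSS.2016} between ideals of $A^+$ and semaphore codes is being applied to exactly the ideal $\pi^{-1}(K(S))$ as used in Theorem~\ref{theorem.ASST} and Section~\ref{section.rational} — i.e. aligning the two descriptions of $\tau$ (absorption into $K(S)$ vs.\ length of semaphore word) so that "same ideal" really does yield "same law of $\tau$".
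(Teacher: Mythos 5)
Your proposal is correct and follows essentially the same route as the paper's (very terse) proof: both reduce the claim to the fact that the ideal of words in $A^+$ hitting the minimal ideal---and hence the semaphore code and the law of $\tau$---is unchanged under the syntactic quotient, since $f^{-1}(0)=\{0\}$ by definition of syntactic at zero. Your write-up just makes explicit what the paper leaves implicit, namely that a semigroup with zero has $K(S)=\{0\}$ and that keeping the alphabet $A$ (rather than collapsing to $f(A)$ and summing probabilities over fibers as in Remark~\ref{remark.syntactic}) yields literally the same semaphore code.
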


\begin{remark}\mbox{}
\label{remark.syntactic}
\begin{enumerate}
\item
If the probability associated to the generator $a\in A$ is $x_a$, then the probability associated to the generator
$b \in f(A)$ is $\sum_{a \in f^{-1}(b)} x_a$.
\item
Note that the stationary distributions of the Markov chains associated to $(S,A)$ and \newline $(S/\equiv,f(A))$ may differ.
\end{enumerate}
\end{remark}

\begin{proof}[Proof of Theorem~\ref{theorem.syntactic}]
Let $\mathcal{S}$ be the semaphore code corresponding to the ideal $K(S)$. Then for a codeword $s\in \mathcal{S}$,
$f(s)$ is a codeword in the semaphore code corresponding to $K(S/\equiv)$. If the probabilities match up as in
Remark~\ref{remark.syntactic}, the random variable $\tau$ matches and hence the upper bound on the mixing time determined from
$\mathsf{Pr}(\tau>t)$ matches. 
\end{proof}

Theorem~\ref{theorem.syntactic} is powerful in the sense that the upper bound on the mixing time for Markov chains with potentially
complicated stationary distributions can be deduced from those for small semigroups which are syntactic at zero.

\begin{example}
\label{example.semaphore}
Let us continue with Example~\ref{example.min}. The semigroup $(S,A)$ with $S=\{0,1\}$, $A=\{a,b\}$ and $a=0, b=1$ is
syntactic. The minimal ideal $K(S)$ is $A^\star a A^\star$ and the semaphore code is $\mathcal{S}=b^\star a
=\{b^ja \mid j\geqslant 0\}$. The left action on $\mathcal{S}$ is given by
\begin{align*}
	a \cdot b^j a &= a && \text{(reset to $a$),}\\
	b \cdot b^j a &= b^{j+1}a && \text{(free),}
\end{align*}
with stationary distribution
\[
	\Psi_{b^ja} = x_b^j x_a \qquad \text{for $j\geqslant 0$.}
\]
Note that
\[
	E[\tau] = \sum_{j=0}^\infty (j+1) x_b^j x_a 
	= x_a \frac{\partial}{\partial x_b} \left( \sum_{j=0}^\infty x_b^{j+1} \right)
	= x_a \frac{\partial}{\partial x_b} \frac{x_b}{1-x_b}
	= \frac{x_a}{(1-x_b)^2} = \frac{1}{x_a}.
\]
In contrast, let us compute
\[
	\mathsf{Pr}(\tau>t) = \sum_{j=t}^\infty x_b^j x_a
	= \frac{x_a x_b^{t}}{1-x_b} = x_b^{t}.
\]
Indeed $\mathsf{Pr}(\tau>t) \leqslant \frac{E[\tau]}{t+1}$ as in Example~\ref{example.single loop}.
\end{example}

\begin{example}
We can amend Example~\ref{example.semaphore} by making the semigroup finite and aperiodic by imposing 
$b^w = b^{w+1}$. Using the methods in~\cite{RhodesSchilling.2019} (or comparing the in-flow with the
out-flow), the stationary distribution can be derived to be
\[
\begin{split}
	\Psi_{b^ja} &= x_b^j x_a \qquad \text{for $0\leqslant j < w$,}\\
	\Psi_{b^wa} &= \frac{x_a x_b^w}{1-x_b}.
\end{split}
\]
The associated syntactic semigroup is $(\{0,1\},A)$, which means by Theorem~\ref{theorem.syntactic} that the upper bound on the mixing 
time is unchanged, even though the stationary distribution is different.
\end{example}

\begin{example}
Let $(S,A)$ be an arbitrary finite semigroup with generators $A=\{a_1,\ldots,a_k\}$ (with or without zero). Let $S^\square$
be the semigroup with a zero $\square$ adjoined. Then
\[
	\left( S^\square / \equiv \right) = (\{ \square,1\},A\cup \{ \square \}).
\]
In this setting the stationary distribution can be complicated, however the upper bound on the mixing time is trivial by 
Theorem~\ref{theorem.syntactic}
\[
	\mathsf{Pr}(\tau>t) = (1-x_\square)^t.
\]	
\end{example}

\begin{example}
Consider the Rees matrix semigroup $S=B(2)$ of~\cite[Example 3.3]{RhodesSchilling.2019} with generators $A=\{a,b\}$, where
$a=(1,2)$ and $b=(2,1)$. The minimal ideal $K(S)$ is $A^\star \{aa,bb\} A^\star$ with semaphore code
\[
	\mathcal{S} = \{(ab)^\star aa, (ba)^\star bb, b(ab)^\star aa, a(ba)^\star bb\}.
\]
The left action on $\mathcal{S}$ is given by
\begin{align*}
	a &\cdot (ab)^j aa = aa && \text{(reset),}\\
	a &\cdot (ba)^j bb = a(ba)^j bb && \text{(free)},\\
	a &\cdot b(ab)^j aa = (ab)^{j+1} aa && \text{(free)},\\
	a &\cdot a(ba)^j bb = aa && \text{(reset),}
\end{align*}
and similarly with $a$ and $b$ interchanged. Note that
\begin{align*}
	\mathsf{Pr}(\tau>2k) &= \sum_{j=k}^\infty (x_a^2 + x_b^2 + x_a  + x_b) (x_ax_b)^j
	= \frac{(x_a x_b)^k(x_a^2 + x_b^2 + 1)}{1-x_a x_b} = 2(x_a x_b)^k,\\
	\mathsf{Pr}(\tau>2k+1) &= \sum_{j=k}^\infty (x_a^2 + x_b^2 + x_a^2 x_b + x_b^2 x_a) (x_ax_b)^j
	= \frac{(x_a x_b)^k(x_a^2 + x_b^2 + x_a x_b)}{1-x_a x_b} = (x_a x_b)^k,
\end{align*}
which by Theorem~\ref{theorem.ASST} gives an upper bound on the mixing time.
\end{example}

\begin{example}
\label{example.ress aa}
Consider the Rees matrix semigroup (see Example~\ref{example.rees}) with $I=I'=\{1,2\}$, $S=\{0,1\}$,
\[
	P = \begin{pmatrix} 1&1\\ 0&1 \end{pmatrix},
\]
and generators $A=\{a,b\}$ with $a=(1,1,2)$ and $b=(2,1,1)$. The minimal ideal is $A^\star aa A^\star$ with 
semaphore code $\mathcal{S} = b^\star (abb^\star)^\star aa$. The left action on $\mathcal{S}$ is given by
\[
\begin{split}
	a \cdot b^j \left(\prod_{k=1}^\ell abb^{e_k} \right) aa &= \begin{cases} ab^j \left(\prod_{k=1}^\ell abb^{e_k} \right) aa
	& \text{if $j>0$ (free),} \\
	aa & \text{if $j=0$ (reset),} \end{cases}\\
	b \cdot b^j \left(\prod_{k=1}^\ell abb^{e_k} \right) aa &= b^{j+1} \left(\prod_{k=1}^\ell abb^{e_k} \right) aa
	\qquad \text{(free).}
\end{split}
\]
In this case, the bound on the mixing time is given by
\[
	\mathsf{Pr}(\tau>k) = x_a^2 \sum_{j \geqslant k-1} \sum_{i=0}^{\lfloor \frac{j}{2} \rfloor} \binom{j-i}{i} x_a^i x_b^{j-i}.
\]
\end{example}

\subsection{Ideals and $d$-testable languages}
\label{section.languages}

As we have seen, ideals are important in the study of Markov chains in the context of semigroups. In addition, ideals are 
closely related to semaphore codes.

Let $(S_j,A)$ be two semigroups with zero for $j=1,2$ with the same generating set $A$ and $I_j$ the ideal of strings
in $A^+$ that is zero in $(S_j,A)$ for $j=1,2$. Let $\mathcal{S}_j$ for $j=1,2$ be the semaphore code associated to the 
ideal $I_j$. Recall that through the left action of $A^+$ on $\mathcal{S}_j$ we have two Markov chains.

\begin{remark}[\defn{Ideal principle}]
\label{remark.ideal containment}
If $I_1 \subseteq I_2$, the upper bound on the mixing time of the Markov chain associated to $\mathcal{S}_2$ is smaller or equal to the
upper bound on the mixing time of the Markov chain associated to $\mathcal{S}_1$.
\end{remark}

Remark~\ref{remark.ideal containment} is true since by~\cite[Corollary 3.5(3)]{ASST.2015} the mixing time is bounded 
above by $P^{\star t}(S \setminus K(S))$ (see Theorem~\ref{theorem.ASST}). If $I_1\subseteq I_2$, we hence have
\[
	P^{\star t}(S_2 \setminus I_2) \leqslant P^{\star t}(S_1 \setminus I_1),
\]
since $I_j$ consists of all words in $A^+$ which are zero in $S_j$.

By Remark~\ref{remark.ideal containment} we want to study Markov chains with the smallest ideals as they have the
worst mixing time. To this end, we will study the \defn{complete lattice of ideals} of $A^\star$. All ideals (including
$\emptyset$) of $A^\star$ form a complete lattice under union and intersection.

\begin{lemma}
\label{lemma.descending chain}
Every nonempty ideal $I$ has a \defn{descending chain}
\[
	I \supset A^\star t_1 A^\star \supset A^\star t_2 A^\star \supset \cdots \supset A^\star t_k A^\star \supset
	\cdots.
\]
\end{lemma}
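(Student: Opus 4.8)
The plan is to build the descending chain greedily, peeling off one principal ideal at a time. First I would observe that since $I$ is a nonempty ideal of $A^\star$, it contains some word; pick any $t_1 \in I$. Because $I$ is an ideal we have $A^\star t_1 A^\star \subseteq I$. Moreover this containment is strict: the word $t_1$ has only finitely many prefixes, and since $t_1 \in I$ there is a shortest prefix $u$ of $t_1$ lying in $I$ (the semaphore code word sitting over $t_1$, in the language of Section~\ref{section.semaphore}); now $u \in I \setminus A^\star t_1 A^\star$ unless $u = t_1$, and if $u = t_1$ one instead notes that $t_1$ itself cannot be written as $x t_1 y$ with $(x,y)\neq(\mathbbm{1},\mathbbm{1})$ on length grounds, yet $I$ being nonempty and closed under the $A^\star$-action on both sides contains words not of the form $x t_1 y$ — concretely, take any $s \in I$; if $s \notin A^\star t_1 A^\star$ we are done, and otherwise replace $t_1$ by such an $s'$ of minimal length in $I$, for which $A^\star s' A^\star \subsetneq I$ holds automatically because no proper factor of $s'$ lies in $I$ while $I$ has words longer than $s'$ would force... — in short, the clean statement is: for any $t \in I$ that is of minimal length among elements of $I$, one has $A^\star t A^\star \subsetneq I$ precisely when $I$ is not a single principal ideal, and if $I$ \emph{is} principal we simply choose the $t_i$ to be longer and longer powers-padded words, which still gives an infinite strictly descending chain. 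Let me restate this more carefully in the writeup; the essential mechanism is just: given $A^\star t_i A^\star$, choose $t_{i+1}$ to be any word strictly longer than $t_i$ that still lies in $I$ (such a word exists because $I$ is infinite, being closed under right multiplication by $A^\star$) and that is \emph{not} a factor-multiple of $t_i$ — or, failing delicacy, simply set $t_{i+1} = a^{|t_i|+1}\, t_i \, a$ for a fixed letter $a\in A$ when $I$ is principal, and pick a genuinely new code word otherwise.

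Concretely I would argue as follows. Since $I \neq \emptyset$ and $I = I A^\star$ (two-sided ideal), for every $n$ there is a word in $I$ of length $\geqslant n$: take $s_0 \in I$ and append letters. Now define $t_i$ recursively: having chosen $t_1,\dots,t_{i}$, the ideal $A^\star t_i A^\star$ contains only words of length $\geqslant |t_i|$, so any word $s \in I$ with $|s| < \text{(something)}$ that is not a factor-multiple of $t_i$ witnesses strictness; pick $t_{i+1} \in I$ with $|t_{i+1}| > |t_i|$ that does not contain $t_i$ as a factor if possible, and otherwise observe directly that $t_i \notin A^\star t_{i+1} A^\star$ because $|t_i| < |t_{i+1}|$, giving $A^\star t_{i+1} A^\star \subsetneq A^\star t_i A^\star$ as long as also $t_{i+1} \in A^\star t_i A^\star$. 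This last requirement can always be met: replace the naive choice by $t_{i+1} := x\, t_i\, y$ where $x$ or $y$ is a nonempty word over $A$; then $t_{i+1} \in A^\star t_i A^\star \subseteq I$, $|t_{i+1}| > |t_i|$ so $t_i \notin A^\star t_{i+1} A^\star$, hence $A^\star t_{i+1}A^\star \subsetneq A^\star t_i A^\star \subseteq I$, and the recursion continues forever. The base case is $t_1 := $ any element of $I$.

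The main obstacle — and really the only subtlety — is the very first strict containment $I \supsetneq A^\star t_1 A^\star$: all later steps are handled uniformly by the "prepend/append a letter" trick, which automatically lands inside the previous principal ideal while strictly increasing length. For the first step one must know that $I$ is not itself equal to $A^\star t_1 A^\star$ for the chosen $t_1$; but this is free because we may simply choose $t_1$ first and then, if $I = A^\star t_1 A^\star$, restart the chain at $t_2' := a t_1$ — or, most cleanly, just note that the statement as written begins the chain with "$I \supset A^\star t_1 A^\star$" for \emph{some} $t_1 \in I$, and strictness there is guaranteed by picking $t_1$ to be a word in $I$ of \emph{minimal length}: then $t_1$ has no proper factor in $I$, so in particular, writing $t_1 = b w$ with $b \in A$, the word $w \notin I$ (it is a proper factor, shorter), hence $b w \in I$ while... hmm, that shows $t_1 \in I$, not what we want. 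The correct minimal-length argument: if $t_1 \in I$ has minimal length, then $A^\star t_1 A^\star \subseteq I$, and any $s \in I$ with $|s| > |t_1|$ that is not equal to $x t_1 y$ for nonempty $x$ or $y$ would finish it — but such $s$ might not exist if $I = A^\star t_1 A^\star$. In that degenerate case $I$ is principal and we fall back on the recursive construction with $t_i := a^{i-1} t_1$, which gives $A^\star a^{i-1} t_1 A^\star \supsetneq A^\star a^i t_1 A^\star$ strictly (again by length), all contained in $I = A^\star t_1 A^\star$, and the chain $I \supsetneq A^\star a t_1 A^\star \supsetneq A^\star a^2 t_1 A^\star \supsetneq \cdots$ does the job. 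So the write-up should split into two cases according to whether $I$ is principal, but both cases use the same one-line length argument for strictness.
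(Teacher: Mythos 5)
Your final argument is correct and is essentially the paper's own proof: choose $t_{i+1} \in A^\star t_i A^\star$ with $|t_{i+1}| > |t_i|$, and strictness of each containment follows because the shortest word in $A^\star t A^\star$ has length exactly $|t|$, so $t_i \notin A^\star t_{i+1} A^\star$. The only difference is that you also address strictness of the very first containment $I \supset A^\star t_1 A^\star$ (which the paper's proof passes over silently); your fix of shifting the chain when $I$ is principal is fine, though the write-up should be pruned of its many false starts before the clean recursion in your second paragraph.
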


\begin{proof}
Since $I\neq \emptyset$, there exists an element $t_1\in I$. The unique smallest length element in $A^\star t_1 A^\star$
is of length $|t_1|$. Choose $t_2 \in A^\star t_1 A^\star$ with $|t_2|>|t_1|$. Then $A^\star t_1 A^\star \supset 
A^\star t_2 A^\star$ and repeat.
\end{proof}

Some ideals $I$ have an infinite \defn{ascending chain}
\[
	I \subset I_1 \subset I_2 \subset \cdots
\]
and some do not. Let $A=\{a,b\}$. Then $A^\star \setminus \{a\}$, for example, does not have an infinite ascending chain.
On the other hand (compare also with Example~\ref{example.ress aa})
\[
	A^\star aa A^\star \subset A^\star aa A^\star \cup A^\star aba A^\star \subset \cdots \subset
	\bigcup_{j=0}^k A^\star ab^j a A^\star \subset \cdots
\]
does.

Every ideal $I \subseteq A^+$ has a unique set of minimal generators, namely all $t=a_1 a_2 \cdots a_{\ell-1} a_\ell \in I$ 
such that $a_1\cdots a_{\ell-1} \not \in I$ and $a_2 \cdots a_\ell \not \in I$. Hence by Lemma~\ref{lemma.descending chain}, 
the smallest ideals are of the form $A^\star t A^\star$, where $|t|$ is big. Since by Remark~\ref{remark.ideal containment} 
smaller ideals have worse upper bounds on the mixing times, we would like to analyze ideals of the form $A^\star t A^\star$, where $|t|$ is large.
This is related to $d$-testable languages, which are finite ideals generated by $\bigcup_{i=1}^n A^\star t_i A^\star$, 
see~\cite{Zalcstein.1972}.

Let $t\in A^+$. The minimal automata $\mathsf{Test}(t)$ accepting the language $A^\star t A^\star$ for 
$t=a_1a_2\ldots a_\ell$ is given as follows. There are $\ell+1$ states: $\mathbbm{1}, a_1, a_1 a_2,\ldots, a_1 \ldots 
a_{\ell-1}, a_1\ldots a_\ell \equiv 0$. 
We have $q \stackrel{a}{\longrightarrow} qa$ if both $q$ and $qa$ are prefixes of $t$ and otherwise 
$q \stackrel{a}{\longrightarrow} \mathbbm{1}$.

Using~\cite[Definition 3.5]{RhodesSchilling.2019a}, $\mathsf{Test}(t)$ can be transformed into a loop graph with loops
labeled by words $w\in A^+$ such that $|w| \leqslant \ell$, $w=w_1\cdots w_k$ is not a prefix of $t$, but $w_1\cdots
w_{k-1}$ is a prefix of $t$. Let us denote the set of all such words $W_t$.
Hence the Kleene expression for the paths in $\mathsf{Test}(t)$ is $\left(\cup_{w \in W_t} \{w\}\right)^\star t$ and hence
the stationary distribution is
\[
	\Psi_t = \frac{x_{a_1} \cdots x_{a_\ell}}{1-\sum_{w\in W_t} \prod_{a\in w} x_a}.
\]
By Theorem~\ref{theorem.main1} we obtain
\[
	E_t[\tau] = \ell + \frac{\sum_{w\in W_t} |w| \prod_{a\in w} x_a}{1-\sum_{w\in W_t} \prod_{a\in w} x_a},
\]
which gives an upper bound on the mixing time using the Markov inequality~\eqref{equation.Markov inequality}.
Theorem~\ref{theorem.main} can also be used to obtain an upper bound on the mixing time using the series expansion of 
$\Psi_t$.

\begin{remark}
The loop graphs in~\cite{RhodesSchilling.2019a} are not allowed to have loops at vertex $\mathbbm{1}$. Here we do
allow loops at $\mathbbm{1}$. To remedy the situation, one could rename $\mathbbm{1}$ by $1$ and have an edge
with probability 1 from $\mathbbm{1}$ to $1$.
\end{remark}

\begin{example}
Let $A=\{a,b\}$ and $t=aba$. Then $\mathsf{Test}(t)$ can be depicted by
\begin{center}
\begin{tikzpicture}[auto]
\node (I) at (0, 0) {$\mathbbm{1}$};
\node (a) at (2,0) {$a$};
\node (ab) at (4,0) {$ab$};
\node (aba) at (6,0) {$aba=0$};
\draw[edge,blue,thick] (I) -- (a) node[midway, above] {$a$\;};
\draw[edge,blue,thick] (a) -- (ab) node[midway, above] {$b$\;};
\draw[edge,blue,thick] (ab) -- (aba) node[midway, above] {$a$\;};
\path (a) edge[->,thick, bend left=30] node[midway,above] {$a$} (I);
\path (ab) edge[->,thick, bend left=30] node[midway,below] {$b$} (I);
\path
	(I) edge [loop left,thick] node {$b$} (I)
	(aba) edge [loop right, thick] node {$a,b$} (aba);
\end{tikzpicture}
\end{center}
The corresponding loop graph is
\begin{center}
\begin{tikzpicture}[auto]
\node (I) at (0, 0) {$\mathbbm{1}$};
\node (a) at (2,0) {$a$};
\node (ab) at (4,0) {$ab$};
\node (aba) at (6,0) {$aba$};
\node (dot) at (0,1) {$\bullet$};
\node (dot1) at (0.5,-1) {$\bullet$};
\node (dot2) at (-0.5,-1) {$\bullet$};
\draw[edge,blue,thick] (I) -- (a) node[midway, above] {$a$\;};
\draw[edge,blue,thick] (a) -- (ab) node[midway, above] {$b$\;};
\draw[edge,blue,thick] (ab) -- (aba) node[midway, above] {$a$\;};
\path
	(I) edge [loop left,thick] node {$b$} (I)
	(I) edge[->, thick, bend right=40] node[midway,right]{$a$} (dot)
	(dot) edge[->, thick, bend right=40] node[midway,left]{$a$} (I)
	(I) edge[->, thick, bend left=40] node[midway,right]{$a$} (dot1)
	(dot1) edge[->, thick, bend left=40] node[midway,below]{$b$} (dot2)
	(dot2) edge[->, thick, bend left=40] node[midway,left]{$b$} (I);
\end{tikzpicture}
\end{center}
Hence the stationary distribution is
\[
	\Psi_{aba} = \frac{x_a^2 x_b}{1-x_b-x_a^2-x_ax_b^2}.
\]
By Theorem~\ref{theorem.main1}, this hence gives
\[
	E_t[\tau] = 3 + \frac{x_b + 2 x_a^2 + 3 x_a x_b^2}{1-x_b-x_a^2-x_a x_b^2}.
\]
\end{example}

\begin{example}
Now let us take $A=\{a,b\}$ and $t=a^\ell$. In this case $W_t = \{a^kb \mid 0\leqslant k<\ell\}$ and hence
\[
	\Psi_t = \frac{x_a^\ell}{1-\sum_{k=0}^{\ell-1} x_a^k x_b}
\]
with an upper bound for the mixing time given by
\[
	E_t[\tau] = \ell + \frac{\sum_{k=0}^{\ell-1} (k+1) x_a^k x_b}{1-\sum_{k=0}^{\ell-1} x_a^k x_b}
\]
using~\eqref{equation.Markov inequality}.
\end{example}

\section{Examples}
\label{section.examples}

In this section, we analyze the mixing time of several examples using the methods developed in Section~\ref{section.mixing time}.
In Section~\ref{section.tsetlin} we derive upper bounds for the mixing time of the famous Tsetlin library~\cite{Tsetlin.1963}
and in Section~\ref{section.edge flipping} for edge flipping on a line~\cite{ChungGraham.2012}. In Section~\ref{section.promotion},
we provide a new Markov chain on linear extension of a poset with $n$ vertices, inspired by but different from the promotion Markov 
chain of Ayyer, Klee and the last author. The mixing time of this Markov chain is $O(n \log n)$ (Theorem~\ref{theorem.expected new}).

\subsection{The Tsetlin library}
\label{section.tsetlin}

The Tsetlin library~\cite{Tsetlin.1963} is a Markov chain whose states are all permutations $S_n$ of $n$ books (on a shelf).
Given $\pi \in S_n$, construct $\pi' \in S_n$ from $\pi$ by removing book $a$ from the shelf and inserting 
it to the front. In this case write $\pi \stackrel{a}{\longrightarrow} \pi'$. 
Let $0< x_a\leqslant 1$ be probabilities for each $1\leqslant a \leqslant n$ such that $\sum_{a=1}^n x_a = 1$. 
In the Tsetlin library Markov chain, we transition $\pi \stackrel{a}{\longrightarrow} \pi'$ with probability $x_a$. 
The stationary distribution for the Tsetlin library was derived by Hendricks~\cite{Hendricks.1972, Hendricks.1973}
and Fill~\cite{Fill.1996}
\begin{equation}
\label{equation.psi Tsetlin}
	\Psi_\pi = \prod_{i=1}^n \frac{x_{\pi_i}}{1-\sum_{j=1}^{i-1} x_{\pi_j}} \qquad \text{for all $\pi \in S_n$.}
\end{equation}
The stationary distribution was derived using right Cayley graphs and their Karnofsky--Rhodes and McCammond
expansions in~\cite[Section 3.1]{RhodesSchilling.2019}.

Consider the semigroup $P(n)$, which consists of the set of all non-empty subsets of $\{1,2,\ldots,n\}$. Multiplication in 
$P(n)$ is union of sets. We pick as generators $A=[n]:=\{1,2,\ldots,n\}$. Then the right Cayley graph $\mathsf{RCay}(P(n),[n])$ 
is the Boolean poset with $\mathbbm{1}$ as root. The right Cayley graph for $P(3)$ is depicted in Figure~\ref{figure.P3}.
Except for the loops at a given vertex, all edges are transitional. Hence $\mathsf{Mc} \circ \mathsf{KR}(P(n),[n]) 
= \mathsf{KR}(P(n),[n])$ is a tree with leaves given by the permutations $S_n$ of $[n]$.
The case $n=3$ is depicted in Figure~\ref{figure.Mc P3}.

\begin{figure}[t]
\begin{tikzpicture}[auto]
\node (I) at (0, 0) {$\mathbbm{1}$};
\node (A) at (-3,-1.5) {$\{1\}$};
\node(B) at (0,-1.5) {$\{2\}$};
\node(C) at (3,-1.5) {$\{3\}$};
\node(D) at (-3,-3) {$\{1,2\}$};
\node(E) at (0,-3) {$\{1,3\}$};
\node(F) at (3,-3) {$\{2,3\}$};
\node(G) at (0,-4.5) {$\{1,2,3\}$};

\draw[edge,blue,thick] (I) -- (A) node[midway, left] {$1$\;};
\draw[edge,blue,thick] (I) -- (B) node[midway, right] {$2$};
\draw[edge,blue,thick] (I) -- (C) node[midway, right] {\;$3$};
\draw[edge,blue,thick] (A) -- (D) node[midway,left] {$2$};
\draw[edge,blue,thick] (A) -- (E) node[midway,above] {$3$};
\draw[edge,blue,thick] (B) -- (D) node[midway,below] {$1$};
\draw[edge,blue,thick] (C) -- (F) node[midway,right] {$2$};
\draw[edge,blue,thick] (B) -- (F) node[midway,below] {$3$};
\draw[edge,blue,thick] (C) -- (E) node[midway,above] {$1$};
\draw[edge,blue,thick] (D) -- (G) node[midway,below] {$3$};
\draw[edge,blue,thick] (E) -- (G) node[midway,right] {$2$};
\draw[edge,blue,thick] (F) -- (G) node[midway,right] {\;$1$};
\path
	(A) edge [loop left] node {$1$} (A)
	(B) edge [loop right] node {$2$} (B)
	(C) edge [loop right] node {$3$} (C)
	(D) edge [loop left] node {$1,2$} (D)
	(E) edge [loop right] node {$1,3$} (E)
	(F) edge [loop right] node {$2,3$} (F)
	(G) edge [loop right] node {$1,2,3$} (G);
\end{tikzpicture}
\caption{The right Cayley graph $\mathsf{RCay}(S,A)$ with $S=P(3)$ and $A=\{1,2,3\}$. Transition edges are drawn in blue.
\label{figure.P3}}
\end{figure}
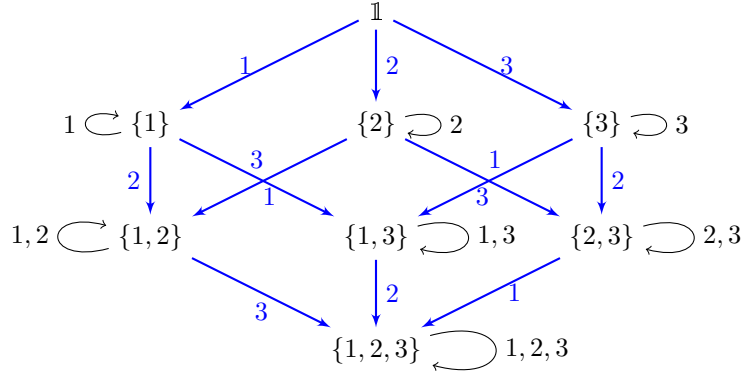

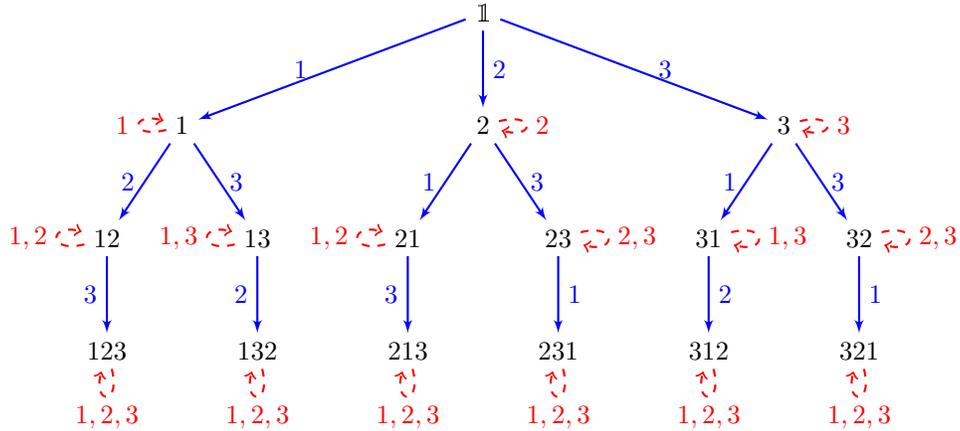
\begin{figure}[t]
\begin{tikzpicture}[auto]
\node (I) at (0, 0) {$\mathbbm{1}$};
\node (A) at (-4,-1.5) {$1$};
\node(B) at (0,-1.5) {$2$};
\node(C) at (4,-1.5) {$3$};
\node(D) at (-5,-3) {$12$};
\node(E) at (-3,-3) {$13$};
\node(F) at (-1,-3) {$21$};
\node(G) at (1,-3) {$23$};
\node(H) at (3,-3) {$31$};
\node(J) at (5,-3) {$32$};
\node(DD) at (-5,-4.5) {$123$};
\node(EE) at (-3,-4.5) {$132$};
\node(FF) at (-1,-4.5) {$213$};
\node(GG) at (1,-4.5) {$231$};
\node(HH) at (3,-4.5) {$312$};
\node(JJ) at (5,-4.5) {$321$};

\draw[edge,blue,thick] (I) -- (A) node[midway, left] {$1$\;\;};
\draw[edge,blue,thick] (I) -- (B) node[midway, right] {$2$};
\draw[edge,blue,thick] (I) -- (C) node[midway, right] {\;\;$3$};
\draw[edge,blue,thick] (A) -- (D) node[midway, left] {$2$};
\draw[edge,blue,thick] (A) -- (E) node[midway, right] {$3$};
\draw[edge,blue,thick] (B) -- (F) node[midway, left] {$1$};
\draw[edge,blue,thick] (B) -- (G) node[midway, right] {$3$};
\draw[edge,blue,thick] (C) -- (H) node[midway, left] {$1$};
\draw[edge,blue,thick] (C) -- (J) node[midway, right] {$3$};
\draw[edge,blue,thick] (D) -- (DD) node[midway, left] {$3$};
\draw[edge,blue,thick] (E) -- (EE) node[midway, left] {$2$};
\draw[edge,blue,thick] (F) -- (FF) node[midway, left] {$3$};
\draw[edge,blue,thick] (G) -- (GG) node[midway, right] {$1$};
\draw[edge,blue,thick] (H) -- (HH) node[midway, right] {$2$};
\draw[edge,blue,thick] (J) -- (JJ) node[midway, right] {$1$};

\path
	(A) edge [loop left, red, dashed,thick] node {$1$} (A)
	(B) edge [loop right, red, dashed, thick] node {$2$} (B)
	(C) edge [loop right, red, dashed,thick] node {$3$} (C)
	(D) edge [loop left, red, dashed,thick] node {$1,2$} (D)
	(E) edge [loop left, red, dashed,thick] node {$1,3$} (E)
	(F) edge [loop left, red, dashed,thick] node {$1,2$} (F)
	(G) edge [loop right, red, dashed,thick] node {$2,3$} (G)
	(H) edge [loop right, red, dashed,thick] node {$1,3$} (H)
	(J) edge [loop right, red, dashed,thick] node {$2,3$} (J)
	(DD) edge [loop below, red, dashed,thick] node {$1,2,3$} (DD)
	(EE) edge [loop below, red, dashed,thick] node {$1,2,3$} (EE)
	(FF) edge [loop below, red, dashed,thick] node {$1,2,3$} (FF)
	(GG) edge [loop below, red, dashed,thick] node {$1,2,3$} (GG)
	(HH) edge [loop below, red, dashed,thick] node {$1,2,3$} (HH)
	(JJ) edge [loop below, red, dashed,thick] node {$1,2,3$} (JJ);
\end{tikzpicture}
\caption{$\mathsf{Mc} \circ \mathsf{KR}(P(3),[3]) = \mathsf{KR}(P(3),[3])$, which is the 
Karnofsky--Rhodes expansion of the right Cayley graph of Figure~\ref{figure.P3}.
\label{figure.Mc P3}}
\end{figure}

To obtain an upper bound on the mixing time, we compute $E[\tau]$ from the Karnofsky--Rhodes expansion
of the right Cayley graph. The ideal consists of the leaves of the tree $\mathsf{KR}(P(n),[n])$, which are
labeled by permutations in $S_n$. Recall that $E[\tau]$ can be computed via~\eqref{equation.Etau}.
Any path from $\mathbbm{1}$ to the ideal is of length at least $n$. Hence $\mathsf{Pr}(\tau\geqslant t)=1$
for $1\leqslant t\leqslant n$.

Now for concreteness consider the loop graph $G$ associated to the path from $\mathbbm{1}$ to $12\ldots n$
in $\mathsf{Mc}\circ \mathsf{KR}(P(n),[n])$. The contributions of the loops can be treated in a similar fashion to 
Example~\ref{example.single loop}. The Kleene expression for all paths from $\mathbbm{1}$ to $12\ldots n$ is given by 
\[
	1 1^\star 2 \{1,2\}^\star 3 \{1,2,3\}^\star \ldots \{1,2,\ldots,n-1\}^\star n.
\]
Hence we obtain (compare with~\eqref{equation.psi Tsetlin})
\[
	\Psi_G(x_1,\ldots,x_n) = \frac{x_1 \cdots x_n}{(1-x_1)(1-x_1-x_2) \cdots (1-x_1-\cdots-x_{n-1})}
\]
and by Theorem~\ref{theorem.main1} 
\begin{equation}
\label{equation.EG Tsetlin}
	E_G[\tau]=n + \frac{x_1}{1-x_1} + \frac{x_1+x_2}{1-x_1-x_2} + \cdots + \frac{x_1+\cdots + x_{n-1}}{1-x_1-\cdots-x_{n-1}},
\end{equation}
which can also be checked directly. If $x_i=\frac{1}{n}$ for all $1\leqslant i\leqslant n$, we hence have
\begin{equation}
\label{equation.EG Tsetlin n}
	E_G[\tau] = n + \frac{1}{n-1} + \frac{2}{n-2}+\cdots + \frac{n-1}{1} = n \left( \sum_{i=1}^n \frac{1}{i} \right).
\end{equation}
The last equality can be proved by induction on $n$. It is well-known that the sequence
$t_n = \sum_{i=1}^n \frac{1}{i} - \ln(n)$ approaches the Euler--Mascheroni constant $\gamma$ as $n\to \infty$.
Therefore
\[
	E[\tau]=E_G[\tau] \leqslant n \ln(n) +n \gamma
\]
and by~\eqref{equation.Markov inequality}
\[
	\| T^t \nu - \pi \| \leqslant \frac{n \ln(n) +n \gamma}{t+1}.
\]

Nestoridi~\cite{Nestoridi.2019} has proven upper/lower bounds for the mixing time of the separation distance. Pike~\cite{Pike.2013}
has discussed the eigenfunctions of the transition matrix. Note that, given the rational expression of the stationary 
distribution~\eqref{equation.psi Tsetlin}, our methods work for general weights $x_i$. Truncating the degree of the expansion
of the stationary distribution~\eqref{equation.psi Tsetlin} gives a precise expression for an upper bound of the mixing time by
Theorem~\ref{theorem.main}.

\subsection{Edge flipping on a line}
\label{section.edge flipping}

In~\cite[Section 3.2]{RhodesSchilling.2019}, we treated the Markov chain obtained by edge flipping on a line
using the semigroup methods of~\cite{RhodesSchilling.2019}. Take a line with $n+1$ vertices.
Each vertex can either be $0$ or $1$. So the state space is $\Omega=\{0,1\}^{n+1}$ of size $2^{n+1}$. 
Pick edge $i$ for $1\leqslant i \leqslant n$ (between vertices $i$ and $i+1$) with probability $x_i$. Then with 
probability $\frac{1}{2}$ make the adjacent vertices both 0 (respectively both 1). Let us call this Markov chain
$\mathcal{M}$. This Markov chain is a Boolean arrangement~\cite{BHR.1999} for which the stationary distribution was 
derived in~\cite{BrownDiaconis.1998} and which was also analyzed in~\cite{ChungGraham.2012}.

In~\cite[Section 3.2]{RhodesSchilling.2019}, we analyzed the stationary distribution in a similar fashion to the
Tsetlin library by considering the semigroup $P^{\pm}(n)$, which is the set of signed subsets of $[n]$. That is, take a 
subset of $[n]$ and in addition associate to each letter a sign $+$ or $-$. Right multiplication of such a subset $X$ by 
a generator $x\in [\pm n] := \{\pm 1, \ldots, \pm n\}$ is addition of $x$ to $X$ if neither $x$ nor $-x$ are in $X$ and 
otherwise return $X$. The minimal ideal in the Karnofsky--Rhodes expansion of this monoid is the set of signed 
permutations $S_n^\pm$. In the Markov chain on the minimal ideal, we transition from $\pi \stackrel{a}{\longrightarrow} \pi'$ 
with probability $y_a$ for $a\in [\pm n]$, where $\pi'$ is obtained from $\pi$ by prepending $a$ to $\pi$ and removing the 
letter $a$ or $-a$ from $\pi$. The stationary distribution associated to $\pi \in S^\pm_n$ was computed to be
\begin{equation}
\label{equation.psi signed}
	\Psi^{\mathsf{KR}(P^\pm(n),[\pm n])}_\pi = \prod_{i=1}^n \frac{y_{\pi_i}}{1-\sum_{j=1}^{i-1} (y_{\pi_j} + y_{-\pi_j})}.
\end{equation}
The stationary distribution for a word $s \in \Omega$ for the Markov chain $\mathcal{M}$ is a lumping (or sum)
of the $\Psi^{\mathsf{KR}(P^\pm(n),[\pm n])}_\pi$ in~\eqref{equation.psi signed}. By the same analysis as in
Section~\ref{section.tsetlin} the mixing time for $\mathcal{M}$ is of order $O(n \ln(n))$.

\subsection{Promotion Markov chain}
\label{section.promotion}

Let $P$ be a \defn{partially ordered set}, also known as a \defn{poset}, on $n$ elements with partial order $\preccurlyeq$. 
A partial order must be reflexive ($a \preccurlyeq a$ for all $a\in P$), antisymmetric ($a\preccurlyeq b$ and $b \preccurlyeq a$
implies $a=b$ for $a,b\in P$), and transitive ($a\preccurlyeq b$ and $b\preccurlyeq c$ implies $a\preccurlyeq c$
for $a,b,c\in P$). We assume that the elements of $P$ are labeled by integers in $[n]:=\{1,2,\ldots,n\}$ such that
if $i,j \in P$ with $i \preccurlyeq j$ then $i \leqslant j$ as integers.
Let $\mathcal{L}:=\mathcal{L}(P)$ be the set of \defn{linear extensions} of $P$ defined as
\[
	\mathcal{L}(P) = \{ \pi \in S_{n} \mid i \prec j \text{ in $P$ } \implies \pi^{-1}_{i} < \pi^{-1}_{j} \text{ as integers} \}.
\]

In computer science, linear extensions are also known as \defn{topological sortings} \cite{Knuth-volume1.1997,
Knuth-volume3.1998}. Computing the number of linear extensions is an important problem for
real world applications~\cite{KarzanovKhachiyan.1991}. For example, it relates to sorting algorithms.
Suppose one wants to schedule a sequence of tasks based on their dependencies. Specifying that
a certain task has to come before another task gives rise to a partial order. A linear extension gives a total order 
in which to perform the jobs. 
In social sciences, linear extensions are used in voting procedures~\cite{FishburnGehrlein.1975,Ackermanetal.2013},
where voters rank the candidates according specified traits (view on foreign policies, view on domestic policies etc).
A recursive formula for the number of linear extensions for a given poset $P$ was given in~\cite{EHS.1989}.
Brightwell and Winkler~\cite{BrightwellWinkler.1991} showed that counting the number
of linear extensions is $\# P$-complete. Bubley and Dyer~\cite{BubleyDyer.1999} provided an algorithm to (almost)
uniformly sample the set of linear extensions of a finite poset of size $n$ with mixing time $O(n^3 \log n)$.
In~\cite{AyyerKleeSchilling.2014}, the promotion Markov chain was introduced, which is a random walk on the
linear extensions of a finite poset $P$. Here we discuss a variant of the promotion Markov chain which
has mixing time of order $O(n \log n)$.

\subsubsection{The model}
We now explain the promotion Markov chain introduced in~\cite{AyyerKleeSchilling.2014}.
For a given poset $P$ with $n$ vertices, the state space of the \defn{promotion Markov chain} is the set of
linear extensions $\mathcal{L}(P)$. For $\pi,\pi'\in \mathcal{L}(P)$, we transition 
$\pi \stackrel{\partial_j}{\longrightarrow} \pi'$ with probability $x_{\pi_j}$ if $\pi'=\partial_j \pi$, where $\partial_j$
is the promotion operator. The promotion operator is defined in terms of more elementary operators $\tau_i$ 
($1\leqslant i<n$) which appeared in~\cite{Haiman.1992, MalvenutoReutenauer.1994, Stanley.2009} and
was used explicitly to count linear extensions in~\cite{EHS.1989}. 
Let $\pi=\pi_1 \ldots \pi_n \in \mathcal{L}(P)$ be a linear extension of $P$ in one-line notation. Then
\begin{equation} 
\label{equation.tau}
	\tau_i \pi = \begin{cases}
	\pi_1 \ldots \pi_{i-1} \pi_{i+1} \pi_i \ldots \pi_n & \text{if $\pi_i$ and $\pi_{i+1}$ are not comparable in $P$,}\\
	\pi_1 \ldots \pi_n & \text{otherwise.} \end{cases}
\end{equation}
In other words, $\tau_i$ acts non-trivially on a linear extension if interchanging entries $\pi_i$ and $\pi_{i+1}$ yields another 
linear extension. Then the \defn{promotion operator} on $\mathcal{L}(P)$ is defined as
\begin{equation}
\label{equation.promotion tau}
 	\partial_j = \tau_1 \tau_2 \cdots \tau_{j-1}.
\end{equation}
Note that we use a different convention here to~\cite{AyyerKleeSchilling.2014}, where $\partial_j = \tau_j \tau_{j+1} 
\cdots \tau_{n-1}$. Our convention here is compatible with the conventions for the Tsetlin library as in 
Section~\ref{section.tsetlin}, where we moved letters to the front of the word rather than the end of the word.

\begin{example}
\label{example.promotion}
Let $P$ be the poset on four vertices defined by its covering relations $\{ (1,4), (2,4), (2,3) \}$. Then its Hasse diagram
is the following:
\setlength{\unitlength}{1mm}
\begin{center}
\begin{picture}(20, 20)
\put(10,4){\circle*{1}}
\put(20,4){\circle*{1}}
\put(9,0){1}
\put(19,0){2}
\put(10,14){\circle*{1}}
\put(20,14){\circle*{1}}
\put(9,16){4}
\put(19,16){3}
\put(10,4){\line(0,1){10}}
\put(20,4){\line(0,1){10}}
\put(10,14){\line(1,-1){10}}
\end{picture}
\end{center}
This poset has five linear extensions
\begin{equation}
\label{equation.LP example}
	\mathcal{L}(P) = \{ 1234, 1243, 2134,  2143, 2314 \}.
\end{equation}
The promotion Markov chain for $P$ is depicted in Figure~\ref{figure.promotion}, where the vertices are the
linear extensions and an arrow labelled by $i$ from $\pi$ to $\pi'$ indicates that $\pi' = \partial_i \pi$.

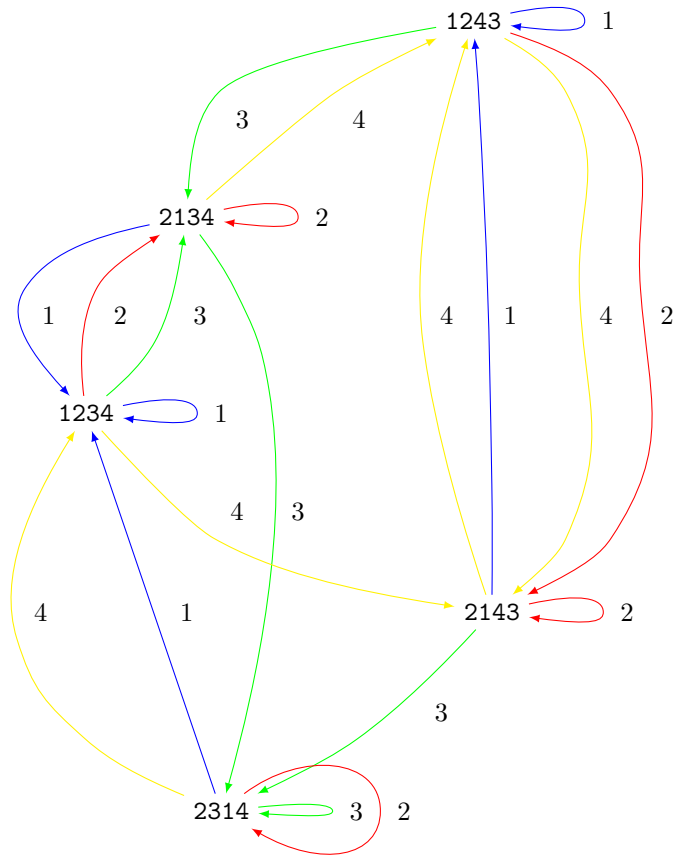
\begin{figure}
\begin{tikzpicture}[>=latex,line join=bevel,]
\node (node_4) at (181.42bp,92.892bp) [draw,draw=none] {$\mathtt{2143}$};
  \node (node_3) at (66.421bp,241.89bp) [draw,draw=none] {$\mathtt{2134}$};
  \node (node_2) at (28.421bp,167.89bp) [draw,draw=none] {$\mathtt{1234}$};
  \node (node_1) at (79.421bp,17.892bp) [draw,draw=none] {$\mathtt{2314}$};
  \node (node_0) at (174.42bp,315.89bp) [draw,draw=none] {$\mathtt{1243}$};
  \draw [green,->] (node_0) ..controls (124.95bp,307.28bp) and (88.338bp,299.05bp)  .. (78.421bp,288.89bp) .. controls (71.149bp,281.44bp) and (68.102bp,270.23bp)  .. (node_3);
  \definecolor{strokecol}{rgb}{0.0,0.0,0.0};
  \pgfsetstrokecolor{strokecol}
  \draw (87.421bp,278.89bp) node {$3$};
  \draw [green,->] (node_2) ..controls (44.272bp,181.29bp) and (50.494bp,187.84bp)  .. (54.421bp,194.89bp) .. controls (59.205bp,203.48bp) and (62.121bp,214.01bp)  .. (node_3);
  \draw (71.421bp,204.89bp) node {$3$};
  \draw [red,->] (node_2) ..controls (26.228bp,186.32bp) and (25.919bp,202.77bp)  .. (32.421bp,214.89bp) .. controls (34.833bp,219.39bp) and (38.346bp,223.39bp)  .. (node_3);
  \draw (41.421bp,204.89bp) node {$2$};
  \draw [blue,->] (node_2) ..controls (62.285bp,174.98bp) and (70.421bp,172.74bp)  .. (70.421bp,167.89bp) .. controls (70.421bp,164.94bp) and (67.4bp,162.95bp)  .. (node_2);
  \draw (79.421bp,167.89bp) node {$1$};
  \draw [blue,->] (node_3) ..controls (28.997bp,234.17bp) and (13.572bp,227.41bp)  .. (5.4214bp,214.89bp) .. controls (-0.87737bp,205.22bp) and (4.7469bp,193.56bp)  .. (node_2);
  \draw (14.421bp,204.89bp) node {$1$};
  \draw [red,->] (node_3) ..controls (100.28bp,248.98bp) and (108.42bp,246.74bp)  .. (108.42bp,241.89bp) .. controls (108.42bp,238.94bp) and (105.4bp,236.95bp)  .. (node_3);
  \draw (117.42bp,241.89bp) node {$2$};
  \draw [red,->] (node_4) ..controls (215.28bp,100.37bp) and (223.42bp,98.013bp)  .. (223.42bp,92.892bp) .. controls (223.42bp,89.772bp) and (220.4bp,87.676bp)  .. (node_4);
  \draw (232.42bp,92.892bp) node {$2$};
  \draw [yellow,->] (node_3) ..controls (89.622bp,262.35bp) and (113.03bp,282.62bp)  .. (122.42bp,288.89bp) .. controls (129.96bp,293.93bp) and (138.55bp,298.73bp)  .. (node_0);
  \draw (131.42bp,278.89bp) node {$4$};
  \draw [yellow,->] (node_1) ..controls (48.18bp,30.992bp) and (37.016bp,37.219bp)  .. (28.421bp,44.892bp) .. controls (13.156bp,58.521bp) and (8.4693bp,63.343bp)  .. (2.4214bp,82.892bp) .. controls (-4.7462bp,106.06bp) and (6.6849bp,132.82bp)  .. (node_2);
  \draw (11.421bp,92.892bp) node {$4$};
  \draw [blue,->] (node_1) ..controls (68.389bp,50.34bp) and (45.835bp,116.67bp)  .. (node_2);
  \draw (66.421bp,92.892bp) node {$1$};
  \draw [yellow,->] (node_0) ..controls (197.91bp,302.81bp) and (205.3bp,296.61bp)  .. (209.42bp,288.89bp) .. controls (224.95bp,259.82bp) and (213.31bp,247.84bp)  .. (214.42bp,214.89bp) .. controls (215.83bp,173.08bp) and (226.73bp,158.98bp)  .. (209.42bp,120.89bp) .. controls (207.42bp,116.48bp) and (204.39bp,112.37bp)  .. (node_4);
  \draw (224.42bp,204.89bp) node {$4$};
  \draw [red,->] (node_0) ..controls (208.64bp,304.72bp) and (219.69bp,298.33bp)  .. (226.42bp,288.89bp) .. controls (245.73bp,261.83bp) and (234.97bp,248.05bp)  .. (237.42bp,214.89bp) .. controls (240.52bp,172.94bp) and (249.66bp,155.95bp)  .. (226.42bp,120.89bp) .. controls (222.75bp,115.36bp) and (217.57bp,110.72bp)  .. (node_4);
  \draw (247.42bp,204.89bp) node {$2$};
  \draw [green,->] (node_3) ..controls (76.728bp,228.11bp) and (81.228bp,221.36bp)  .. (84.421bp,214.89bp) .. controls (92.327bp,198.89bp) and (94.493bp,194.5bp)  .. (97.421bp,176.89bp) .. controls (105.83bp,126.35bp) and (92.778bp,66.104bp)  .. (node_1);
  \draw (108.42bp,130.89bp) node {$3$};
  \draw [green,->] (node_4) ..controls (164.21bp,74.14bp) and (147.23bp,56.919bp)  .. (130.42bp,44.892bp) .. controls (123.35bp,39.836bp) and (115.22bp,35.107bp)  .. (node_1);
  \draw (162.42bp,54.892bp) node {$3$};
  \draw [green,->] (node_1) ..controls (113.28bp,21.436bp) and (121.42bp,20.318bp)  .. (121.42bp,17.892bp) .. controls (121.42bp,16.414bp) and (118.4bp,15.422bp)  .. (node_1);
  \draw (130.42bp,17.892bp) node {$3$};
  \draw [red,->] (node_1) ..controls (109.88bp,40.928bp) and (139.42bp,38.088bp)  .. (139.42bp,17.892bp) .. controls (139.42bp,0.45816bp) and (117.41bp,-4.0419bp)  .. (node_1);
  \draw (148.42bp,17.892bp) node {$2$};
  \draw [yellow,->] (node_4) ..controls (172.62bp,118.86bp) and (159.86bp,159.53bp)  .. (155.42bp,194.89bp) .. controls (150.85bp,231.32bp) and (161.09bp,273.65bp)  .. (node_0);
  \draw (164.42bp,204.89bp) node {$4$};
  \draw [blue,->] (node_4) ..controls (181.59bp,111.37bp) and (181.65bp,127.23bp)  .. (181.42bp,140.89bp) .. controls (180.31bp,206.7bp) and (179.96bp,223.17bp)  .. (176.42bp,288.89bp) .. controls (176.28bp,291.6bp) and (176.09bp,294.46bp)  .. (node_0);
  \draw (188.42bp,204.89bp) node {$1$};
  \draw [blue,->] (node_0) ..controls (208.28bp,322.98bp) and (216.42bp,320.74bp)  .. (216.42bp,315.89bp) .. controls (216.42bp,312.94bp) and (213.4bp,310.95bp)  .. (node_0);
  \draw (225.42bp,315.89bp) node {$1$};
  \draw [yellow,->] (node_2) ..controls (47.275bp,147.21bp) and (67.3bp,126.09bp)  .. (76.421bp,120.89bp) .. controls (98.258bp,108.46bp) and (125.8bp,101.38bp)  .. (node_4);
  \draw (85.421bp,130.89bp) node {$4$};
\end{tikzpicture}
\caption{The promotion Markov chain digraph for the poset in Example~\ref{example.promotion}.
\label{figure.promotion}}
\end{figure}

We may represent the promotion operator $\partial_i$ by a $|\mathcal{L}(P)| \times |\mathcal{L}(P)|$-dimensional matrix,
where row $k$ and column $j$ contains 1 if the $j$-th linear extension in~\eqref{equation.LP example} is mapped
to the $k$-th linear extension in~\eqref{equation.LP example} under $\partial_i$; the rest of the entries are zero. 
For example, $\partial_1$ is represented by the matrix
\[
	\begin{pmatrix}
	1&0&1&0&1\\
	0&1&0&1&0\\
	0&0&0&0&0\\
	0&0&0&0&0\\
	0&0&0&0&0
	\end{pmatrix}.
\]
The right Cayley graph of the monoid generated by the matrices for the promotion operators
$\partial_1,\partial_2,\partial_3,\partial_4$ is depicted in Figure~\ref{figure.right Cayley example}.
The vertices in the right Cayley graph are labeled by reduced words in the generators.
For example $[1,4,1]$ stands for the element $\partial_1 \partial_4 \partial_1$.

\begin{figure}
\rotatebox{270}{\scalebox{0.5}{
\begin{tikzpicture}[>=latex,line join=bevel,]
\node (node_13) at (419.0bp,170.6bp) [draw,draw=none] {$[4, 4]$};
  \node (node_14) at (59.0bp,19.599bp) [draw,draw=none] {$[1, 3]$};
  \node (node_18) at (1105.0bp,19.599bp) [draw,draw=none] {$[3, 2]$};
  \node (node_19) at (1027.0bp,94.599bp) [draw,draw=none] {$[3, 4]$};
  \node (node_9) at (419.0bp,322.6bp) [draw,draw=none] {$[4]$};
  \node (node_8) at (709.0bp,94.599bp) [draw,draw=none] {$[2, 4, 3, 4]$};
  \node (node_7) at (611.0bp,19.599bp) [draw,draw=none] {$[4, 1]$};
  \node (node_6) at (175.0bp,170.6bp) [draw,draw=none] {$[4, 3]$};
  \node (node_5) at (154.0bp,94.599bp) [draw,draw=none] {$[4, 3, 4]$};
  \node (node_4) at (751.0bp,170.6bp) [draw,draw=none] {$[2, 4, 3]$};
  \node (node_3) at (253.0bp,246.6bp) [draw,draw=none] {$[1, 4]$};
  \node (node_2) at (726.0bp,322.6bp) [draw,draw=none] {$[2, 1]$};
  \node (node_1) at (353.0bp,94.599bp) [draw,draw=none] {$[4, 4, 4]$};
  \node (node_0) at (585.0bp,473.6bp) [draw,draw=none] {$[]$};
  \node (node_17) at (677.0bp,246.6bp) [draw,draw=none] {$[2, 4]$};
  \node (node_11) at (154.0bp,322.6bp) [draw,draw=none] {$[1]$};
  \node (node_16) at (698.0bp,398.6bp) [draw,draw=none] {$[2]$};
  \node (node_10) at (917.0bp,19.599bp) [draw,draw=none] {$[3, 1]$};
  \node (node_15) at (266.0bp,19.599bp) [draw,draw=none] {$[1, 4, 1]$};
  \node (node_12) at (1005.0bp,170.6bp) [draw,draw=none] {$[3]$};
  \draw [black,->] (node_16) ..controls (705.6bp,377.97bp) and (713.58bp,356.32bp)  .. (node_2);
  \definecolor{strokecol}{rgb}{0.0,0.0,0.0};
  \pgfsetstrokecolor{strokecol}
  \draw (724.0bp,360.6bp) node {$1$};
  \draw [black,->] (node_9) ..controls (375.94bp,313.87bp) and (243.2bp,285.34bp)  .. (213.0bp,256.6bp) .. controls (193.72bp,238.26bp) and (183.59bp,208.4bp)  .. (node_6);
  \draw (222.0bp,246.6bp) node {$3$};
  \draw [black,->] (node_2) ..controls (748.1bp,326.79bp) and (757.0bp,325.72bp)  .. (757.0bp,322.6bp) .. controls (757.0bp,320.7bp) and (753.69bp,319.56bp)  .. (node_2);
  \draw (766.0bp,322.6bp) node {$2$};
  \draw [black,->] (node_2) ..controls (750.87bp,345.88bp) and (775.0bp,343.01bp)  .. (775.0bp,322.6bp) .. controls (775.0bp,305.38bp) and (757.82bp,300.64bp)  .. (node_2);
  \draw (784.0bp,322.6bp) node {$1$};
  \draw [black,->] (node_5) ..controls (148.66bp,75.604bp) and (146.02bp,57.778bp)  .. (155.0bp,46.599bp) .. controls (156.33bp,44.942bp) and (205.66bp,33.395bp)  .. (node_15);
  \draw (164.0bp,56.599bp) node {$3$};
  \draw [black,->] (node_5) ..controls (166.8bp,75.428bp) and (180.39bp,57.511bp)  .. (196.0bp,46.599bp) .. controls (208.68bp,37.739bp) and (224.65bp,31.289bp)  .. (node_15);
  \draw (205.0bp,56.599bp) node {$2$};
  \draw [black,->] (node_16) ..controls (693.69bp,377.55bp) and (688.97bp,353.35bp)  .. (686.0bp,332.6bp) .. controls (682.69bp,309.44bp) and (680.07bp,282.6bp)  .. (node_17);
  \draw (695.0bp,322.6bp) node {$4$};
  \draw [black,->] (node_3) ..controls (275.1bp,254.98bp) and (284.0bp,252.83bp)  .. (284.0bp,246.6bp) .. controls (284.0bp,242.8bp) and (280.69bp,240.52bp)  .. (node_3);
  \draw (293.0bp,246.6bp) node {$2$};
  \draw [black,->] (node_12) ..controls (1001.1bp,151.94bp) and (999.34bp,135.63bp)  .. (1004.0bp,122.6bp) .. controls (1005.4bp,118.58bp) and (1007.7bp,114.73bp)  .. (node_19);
  \draw (1013.0bp,132.6bp) node {$4$};
  \draw [black,->] (node_11) ..controls (177.61bp,304.48bp) and (211.86bp,278.18bp)  .. (node_3);
  \draw (224.0bp,284.6bp) node {$4$};
  \draw [black,->] (node_17) ..controls (655.34bp,235.03bp) and (644.76bp,227.7bp)  .. (638.0bp,218.6bp) .. controls (616.71bp,189.94bp) and (618.0bp,177.79bp)  .. (612.0bp,142.6bp) .. controls (605.77bp,106.07bp) and (607.53bp,62.552bp)  .. (node_7);
  \draw (621.0bp,132.6bp) node {$4$};
  \draw [black,->] (node_17) ..controls (668.37bp,209.45bp) and (646.68bp,119.22bp)  .. (622.0bp,46.599bp) .. controls (621.02bp,43.724bp) and (619.9bp,40.713bp)  .. (node_7);
  \draw (659.0bp,132.6bp) node {$1$};
  \draw [black,->] (node_8) ..controls (700.13bp,113.51bp) and (694.9bp,130.36bp)  .. (702.0bp,142.6bp) .. controls (706.79bp,150.85bp) and (715.02bp,156.87bp)  .. (node_4);
  \draw (711.0bp,132.6bp) node {$4$};
  \draw [black,->] (node_4) ..controls (763.82bp,156.27bp) and (770.28bp,149.03bp)  .. (776.0bp,142.6bp) .. controls (813.87bp,100.01bp) and (815.55bp,80.982bp)  .. (861.0bp,46.599bp) .. controls (871.05bp,39.0bp) and (883.57bp,32.718bp)  .. (node_10);
  \draw (829.0bp,94.599bp) node {$3$};
  \draw [black,->] (node_4) ..controls (776.39bp,156.71bp) and (788.14bp,149.71bp)  .. (798.0bp,142.6bp) .. controls (841.98bp,110.88bp) and (883.72bp,61.884bp)  .. (node_10);
  \draw (870.0bp,94.599bp) node {$2$};
  \draw [black,->] (node_3) ..controls (236.33bp,232.57bp) and (228.04bp,225.33bp)  .. (221.0bp,218.6bp) .. controls (210.29bp,208.36bp) and (198.75bp,196.32bp)  .. (node_6);
  \draw (230.0bp,208.6bp) node {$3$};
  \draw [black,->] (node_8) ..controls (735.55bp,75.161bp) and (762.92bp,56.841bp)  .. (789.0bp,46.599bp) .. controls (824.15bp,32.8bp) and (867.63bp,25.536bp)  .. (node_10);
  \draw (798.0bp,56.599bp) node {$1$};
  \draw [black,->] (node_11) ..controls (113.73bp,311.95bp) and (0.0bp,279.4bp)  .. (0.0bp,246.6bp) .. controls (0.0bp,246.6bp) and (0.0bp,246.6bp)  .. (0.0bp,94.599bp) .. controls (0.0bp,69.488bp) and (20.685bp,47.819bp)  .. (node_14);
  \draw (9.0bp,170.6bp) node {$3$};
  \draw [black,->] (node_1) ..controls (331.63bp,80.792bp) and (321.44bp,73.682bp)  .. (313.0bp,66.599bp) .. controls (301.65bp,57.077bp) and (289.86bp,45.284bp)  .. (node_15);
  \draw (322.0bp,56.599bp) node {$3$};
  \draw [black,->] (node_1) ..controls (348.38bp,75.451bp) and (342.35bp,57.546bp)  .. (331.0bp,46.599bp) .. controls (320.8bp,36.759bp) and (306.5bp,30.293bp)  .. (node_15);
  \draw (352.0bp,56.599bp) node {$2$};
  \draw [black,->] (node_5) ..controls (132.86bp,80.498bp) and (122.68bp,73.384bp)  .. (114.0bp,66.599bp) .. controls (103.33bp,58.259bp) and (101.29bp,55.412bp)  .. (91.0bp,46.599bp) .. controls (86.515bp,42.757bp) and (81.653bp,38.634bp)  .. (node_14);
  \draw (123.0bp,56.599bp) node {$1$};
  \draw [black,->] (node_14) ..controls (81.1bp,21.584bp) and (90.0bp,21.076bp)  .. (90.0bp,19.599bp) .. controls (90.0bp,18.7bp) and (86.695bp,18.16bp)  .. (node_14);
  \draw (99.0bp,19.599bp) node {$4$};
  \draw [black,->] (node_14) ..controls (85.173bp,40.277bp) and (108.0bp,37.363bp)  .. (108.0bp,19.599bp) .. controls (108.0bp,4.8888bp) and (92.345bp,0.36183bp)  .. (node_14);
  \draw (117.0bp,19.599bp) node {$3$};
  \draw [black,->] (node_14) ..controls (93.013bp,42.635bp) and (126.0bp,39.795bp)  .. (126.0bp,19.599bp) .. controls (126.0bp,1.9285bp) and (100.74bp,-2.4547bp)  .. (node_14);
  \draw (135.0bp,19.599bp) node {$2$};
  \draw [black,->] (node_14) ..controls (101.0bp,44.922bp) and (144.0bp,42.03bp)  .. (144.0bp,19.599bp) .. controls (144.0bp,-0.64079bp) and (108.99bp,-4.9717bp)  .. (node_14);
  \draw (153.0bp,19.599bp) node {$1$};
  \draw [black,->] (node_12) ..controls (1022.6bp,162.3bp) and (1038.0bp,153.71bp)  .. (1048.0bp,142.6bp) .. controls (1076.1bp,111.4bp) and (1092.7bp,63.91bp)  .. (node_18);
  \draw (1092.0bp,94.599bp) node {$3$};
  \draw [black,->] (node_12) ..controls (1034.0bp,162.45bp) and (1087.7bp,143.56bp)  .. (1105.0bp,104.6bp) .. controls (1114.4bp,83.374bp) and (1112.3bp,56.11bp)  .. (node_18);
  \draw (1119.0bp,94.599bp) node {$2$};
  \draw [black,->] (node_2) ..controls (712.55bp,301.74bp) and (698.2bp,279.48bp)  .. (node_17);
  \draw (716.0bp,284.6bp) node {$4$};
  \draw [black,->] (node_9) ..controls (419.0bp,290.06bp) and (419.0bp,222.32bp)  .. (node_13);
  \draw (428.0bp,246.6bp) node {$4$};
  \draw [black,->] (node_0) ..controls (533.46bp,455.54bp) and (247.05bp,355.2bp)  .. (node_11);
  \draw (407.0bp,398.6bp) node {$1$};
  \draw [black,->] (node_13) ..controls (430.94bp,156.39bp) and (436.89bp,149.15bp)  .. (442.0bp,142.6bp) .. controls (474.63bp,100.75bp) and (469.24bp,76.626bp)  .. (513.0bp,46.599bp) .. controls (535.55bp,31.124bp) and (566.72bp,24.49bp)  .. (node_7);
  \draw (485.0bp,94.599bp) node {$3$};
  \draw [black,->] (node_13) ..controls (441.02bp,157.63bp) and (452.57bp,150.21bp)  .. (462.0bp,142.6bp) .. controls (509.21bp,104.52bp) and (509.97bp,82.313bp)  .. (559.0bp,46.599bp) .. controls (568.15bp,39.932bp) and (579.18bp,33.957bp)  .. (node_7);
  \draw (526.0bp,94.599bp) node {$2$};
  \draw [black,->] (node_5) ..controls (138.16bp,113.14bp) and (128.13bp,129.4bp)  .. (135.0bp,142.6bp) .. controls (138.96bp,150.21bp) and (146.05bp,156.18bp)  .. (node_6);
  \draw (144.0bp,132.6bp) node {$4$};
  \draw [black,->] (node_19) ..controls (1027.2bp,113.15bp) and (1026.4bp,129.42bp)  .. (1022.0bp,142.6bp) .. controls (1020.8bp,146.16bp) and (1019.1bp,149.74bp)  .. (node_12);
  \draw (1035.0bp,132.6bp) node {$4$};
  \draw [black,->] (node_16) ..controls (739.35bp,390.05bp) and (861.68bp,360.9bp)  .. (935.0bp,294.6bp) .. controls (943.81bp,286.63bp) and (978.85bp,220.73bp)  .. (node_12);
  \draw (957.0bp,284.6bp) node {$3$};
  \draw [black,->] (node_12) ..controls (989.61bp,141.75bp) and (961.14bp,89.147bp)  .. (935.0bp,46.599bp) .. controls (933.05bp,43.431bp) and (930.91bp,40.102bp)  .. (node_10);
  \draw (977.0bp,94.599bp) node {$1$};
  \draw [black,->] (node_18) ..controls (1127.1bp,21.584bp) and (1136.0bp,21.076bp)  .. (1136.0bp,19.599bp) .. controls (1136.0bp,18.7bp) and (1132.7bp,18.16bp)  .. (node_18);
  \draw (1145.0bp,19.599bp) node {$4$};
  \draw [black,->] (node_18) ..controls (1131.2bp,40.277bp) and (1154.0bp,37.363bp)  .. (1154.0bp,19.599bp) .. controls (1154.0bp,4.8888bp) and (1138.3bp,0.36183bp)  .. (node_18);
  \draw (1163.0bp,19.599bp) node {$3$};
  \draw [black,->] (node_18) ..controls (1139.0bp,42.635bp) and (1172.0bp,39.795bp)  .. (1172.0bp,19.599bp) .. controls (1172.0bp,1.9285bp) and (1146.7bp,-2.4547bp)  .. (node_18);
  \draw (1181.0bp,19.599bp) node {$2$};
  \draw [black,->] (node_18) ..controls (1147.0bp,44.922bp) and (1190.0bp,42.03bp)  .. (1190.0bp,19.599bp) .. controls (1190.0bp,-0.64079bp) and (1155.0bp,-4.9717bp)  .. (node_18);
  \draw (1199.0bp,19.599bp) node {$1$};
  \draw [black,->] (node_17) ..controls (699.1bp,254.98bp) and (708.0bp,252.83bp)  .. (708.0bp,246.6bp) .. controls (708.0bp,242.8bp) and (704.69bp,240.52bp)  .. (node_17);
  \draw (717.0bp,246.6bp) node {$2$};
  \draw [black,->] (node_6) ..controls (169.33bp,150.08bp) and (163.43bp,128.73bp)  .. (node_5);
  \draw (175.0bp,132.6bp) node {$4$};
  \draw [black,->] (node_1) ..controls (343.64bp,113.51bp) and (337.89bp,130.8bp)  .. (346.0bp,142.6bp) .. controls (351.9bp,151.19bp) and (376.99bp,159.55bp)  .. (node_13);
  \draw (355.0bp,132.6bp) node {$4$};
  \draw [black,->] (node_6) ..controls (143.14bp,166.6bp) and (109.16bp,159.77bp)  .. (86.0bp,142.6bp) .. controls (69.559bp,130.42bp) and (66.163bp,124.11bp)  .. (60.0bp,104.6bp) .. controls (53.073bp,82.666bp) and (54.21bp,55.933bp)  .. (node_14);
  \draw (69.0bp,94.599bp) node {$3$};
  \draw [black,->] (node_6) ..controls (151.76bp,159.91bp) and (138.38bp,152.26bp)  .. (129.0bp,142.6bp) .. controls (98.447bp,111.13bp) and (76.469bp,63.749bp)  .. (node_14);
  \draw (109.0bp,94.599bp) node {$2$};
  \draw [black,->] (node_9) ..controls (386.62bp,307.77bp) and (312.71bp,273.93bp)  .. (node_3);
  \draw (364.0bp,284.6bp) node {$2$};
  \draw [black,->] (node_0) ..controls (607.9bp,458.4bp) and (655.57bp,426.76bp)  .. (node_16);
  \draw (663.0bp,436.6bp) node {$2$};
  \draw [black,->] (node_3) ..controls (252.23bp,217.6bp) and (251.32bp,165.79bp)  .. (254.0bp,122.6bp) .. controls (255.85bp,92.884bp) and (260.31bp,58.578bp)  .. (node_15);
  \draw (263.0bp,132.6bp) node {$4$};
  \draw [black,->] (node_3) ..controls (260.8bp,232.67bp) and (264.2bp,225.44bp)  .. (266.0bp,218.6bp) .. controls (281.13bp,160.97bp) and (273.4bp,144.08bp)  .. (270.0bp,84.599bp) .. controls (269.11bp,69.059bp) and (268.02bp,51.435bp)  .. (node_15);
  \draw (283.0bp,132.6bp) node {$1$};
  \draw [black,->] (node_13) ..controls (401.33bp,156.61bp) and (393.33bp,149.61bp)  .. (387.0bp,142.6bp) .. controls (378.38bp,133.05bp) and (370.01bp,121.27bp)  .. (node_1);
  \draw (396.0bp,132.6bp) node {$4$};
  \draw [black,->] (node_4) ..controls (742.74bp,156.3bp) and (738.59bp,149.06bp)  .. (735.0bp,142.6bp) .. controls (729.38bp,132.5bp) and (723.22bp,121.14bp)  .. (node_8);
  \draw (744.0bp,132.6bp) node {$4$};
  \draw [black,->] (node_0) ..controls (643.51bp,469.51bp) and (1005.0bp,442.58bp)  .. (1005.0bp,398.6bp) .. controls (1005.0bp,398.6bp) and (1005.0bp,398.6bp)  .. (1005.0bp,246.6bp) .. controls (1005.0bp,227.25bp) and (1005.0bp,205.14bp)  .. (node_12);
  \draw (1014.0bp,322.6bp) node {$3$};
  \draw [black,->] (node_19) ..controls (1041.5bp,75.968bp) and (1055.4bp,59.249bp)  .. (1069.0bp,46.599bp) .. controls (1073.6bp,42.311bp) and (1078.9bp,38.043bp)  .. (node_18);
  \draw (1078.0bp,56.599bp) node {$1$};
  \draw [black,->] (node_9) ..controls (450.93bp,272.21bp) and (564.09bp,93.631bp)  .. (node_7);
  \draw (529.0bp,170.6bp) node {$1$};
  \draw [black,->] (node_15) ..controls (293.54bp,21.478bp) and (302.0bp,20.935bp)  .. (302.0bp,19.599bp) .. controls (302.0bp,18.785bp) and (298.86bp,18.266bp)  .. (node_15);
  \draw (311.0bp,19.599bp) node {$4$};
  \draw [black,->] (node_15) ..controls (294.61bp,40.304bp) and (320.0bp,37.446bp)  .. (320.0bp,19.599bp) .. controls (320.0bp,4.5412bp) and (301.92bp,0.15358bp)  .. (node_15);
  \draw (329.0bp,19.599bp) node {$3$};
  \draw [black,->] (node_15) ..controls (302.55bp,42.635bp) and (338.0bp,39.795bp)  .. (338.0bp,19.599bp) .. controls (338.0bp,1.7707bp) and (310.37bp,-2.5319bp)  .. (node_15);
  \draw (347.0bp,19.599bp) node {$2$};
  \draw [black,->] (node_15) ..controls (310.47bp,44.922bp) and (356.0bp,42.03bp)  .. (356.0bp,19.599bp) .. controls (356.0bp,-0.72841bp) and (318.61bp,-5.0089bp)  .. (node_15);
  \draw (365.0bp,19.599bp) node {$1$};
  \draw [black,->] (node_4) ..controls (762.3bp,137.01bp) and (784.82bp,64.154bp)  .. (770.0bp,46.599bp) .. controls (761.34bp,36.339bp) and (676.35bp,26.293bp)  .. (node_7);
  \draw (782.0bp,94.599bp) node {$1$};
  \draw [black,->] (node_19) ..controls (1022.7bp,75.085bp) and (1016.9bp,56.988bp)  .. (1005.0bp,46.599bp) .. controls (986.93bp,30.77bp) and (959.74bp,24.221bp)  .. (node_10);
  \draw (1027.0bp,56.599bp) node {$3$};
  \draw [black,->] (node_19) ..controls (1004.7bp,82.058bp) and (992.65bp,74.542bp)  .. (983.0bp,66.599bp) .. controls (973.54bp,58.806bp) and (973.64bp,54.177bp)  .. (964.0bp,46.599bp) .. controls (956.45bp,40.664bp) and (947.45bp,35.169bp)  .. (node_10);
  \draw (992.0bp,56.599bp) node {$2$};
  \draw [black,->] (node_17) ..controls (697.64bp,225.4bp) and (720.22bp,202.21bp)  .. (node_4);
  \draw (731.0bp,208.6bp) node {$3$};
  \draw [black,->] (node_2) ..controls (782.82bp,312.84bp) and (921.0bp,285.75bp)  .. (921.0bp,246.6bp) .. controls (921.0bp,246.6bp) and (921.0bp,246.6bp)  .. (921.0bp,94.599bp) .. controls (921.0bp,75.48bp) and (919.69bp,53.659bp)  .. (node_10);
  \draw (930.0bp,170.6bp) node {$3$};
  \draw [black,->] (node_16) ..controls (714.65bp,407.43bp) and (724.0bp,405.58bp)  .. (724.0bp,398.6bp) .. controls (724.0bp,394.46bp) and (720.7bp,392.12bp)  .. (node_16);
  \draw (733.0bp,398.6bp) node {$2$};
  \draw [black,->] (node_10) ..controls (939.1bp,21.584bp) and (948.0bp,21.076bp)  .. (948.0bp,19.599bp) .. controls (948.0bp,18.7bp) and (944.69bp,18.16bp)  .. (node_10);
  \draw (957.0bp,19.599bp) node {$4$};
  \draw [black,->] (node_10) ..controls (943.17bp,40.277bp) and (966.0bp,37.363bp)  .. (966.0bp,19.599bp) .. controls (966.0bp,4.8888bp) and (950.35bp,0.36183bp)  .. (node_10);
  \draw (975.0bp,19.599bp) node {$3$};
  \draw [black,->] (node_10) ..controls (951.01bp,42.635bp) and (984.0bp,39.795bp)  .. (984.0bp,19.599bp) .. controls (984.0bp,1.9285bp) and (958.74bp,-2.4547bp)  .. (node_10);
  \draw (993.0bp,19.599bp) node {$2$};
  \draw [black,->] (node_10) ..controls (959.0bp,44.922bp) and (1002.0bp,42.03bp)  .. (1002.0bp,19.599bp) .. controls (1002.0bp,-0.64079bp) and (966.99bp,-4.9717bp)  .. (node_10);
  \draw (1011.0bp,19.599bp) node {$1$};
  \draw [black,->] (node_6) ..controls (191.62bp,137.6bp) and (227.26bp,67.644bp)  .. (242.0bp,46.599bp) .. controls (244.59bp,42.901bp) and (247.62bp,39.147bp)  .. (node_15);
  \draw (228.0bp,94.599bp) node {$1$};
  \draw [black,->] (node_0) ..controls (554.9bp,446.22bp) and (470.11bp,369.09bp)  .. (node_9);
  \draw (522.0bp,398.6bp) node {$4$};
  \draw [black,->] (node_7) ..controls (633.1bp,21.584bp) and (642.0bp,21.076bp)  .. (642.0bp,19.599bp) .. controls (642.0bp,18.7bp) and (638.69bp,18.16bp)  .. (node_7);
  \draw (651.0bp,19.599bp) node {$4$};
  \draw [black,->] (node_7) ..controls (637.17bp,40.277bp) and (660.0bp,37.363bp)  .. (660.0bp,19.599bp) .. controls (660.0bp,4.8888bp) and (644.35bp,0.36183bp)  .. (node_7);
  \draw (669.0bp,19.599bp) node {$3$};
  \draw [black,->] (node_7) ..controls (645.01bp,42.635bp) and (678.0bp,39.795bp)  .. (678.0bp,19.599bp) .. controls (678.0bp,1.9285bp) and (652.74bp,-2.4547bp)  .. (node_7);
  \draw (687.0bp,19.599bp) node {$2$};
  \draw [black,->] (node_7) ..controls (653.0bp,44.922bp) and (696.0bp,42.03bp)  .. (696.0bp,19.599bp) .. controls (696.0bp,-0.64079bp) and (660.99bp,-4.9717bp)  .. (node_7);
  \draw (705.0bp,19.599bp) node {$1$};
  \draw [black,->] (node_11) ..controls (170.65bp,327.01bp) and (180.0bp,326.09bp)  .. (180.0bp,322.6bp) .. controls (180.0bp,320.53bp) and (176.7bp,319.36bp)  .. (node_11);
  \draw (189.0bp,322.6bp) node {$2$};
  \draw [black,->] (node_11) ..controls (176.34bp,345.88bp) and (198.0bp,343.01bp)  .. (198.0bp,322.6bp) .. controls (198.0bp,305.7bp) and (183.14bp,300.82bp)  .. (node_11);
  \draw (207.0bp,322.6bp) node {$1$};
  \draw [black,->] (node_13) ..controls (428.3bp,139.95bp) and (442.23bp,80.016bp)  .. (413.0bp,46.599bp) .. controls (397.77bp,29.195bp) and (332.49bp,22.937bp)  .. (node_15);
  \draw (439.0bp,94.599bp) node {$1$};
  \draw [black,->] (node_8) ..controls (705.01bp,75.249bp) and (699.47bp,57.239bp)  .. (688.0bp,46.599bp) .. controls (673.18bp,32.849bp) and (650.94bp,26.038bp)  .. (node_7);
  \draw (710.0bp,56.599bp) node {$3$};
  \draw [black,->] (node_8) ..controls (685.51bp,80.894bp) and (674.76bp,73.894bp)  .. (666.0bp,66.599bp) .. controls (656.58bp,58.756bp) and (656.16bp,54.751bp)  .. (647.0bp,46.599bp) .. controls (642.21bp,42.34bp) and (636.8bp,38.04bp)  .. (node_7);
  \draw (675.0bp,56.599bp) node {$2$};
  \draw [black,->] (node_1) ..controls (381.98bp,75.031bp) and (412.34bp,56.367bp)  .. (441.0bp,46.599bp) .. controls (491.57bp,29.366bp) and (554.42bp,23.034bp)  .. (node_7);
  \draw (450.0bp,56.599bp) node {$1$};
\end{tikzpicture}
}}
\caption{Right Cayley graph for the promotion Markov chain of Example~\ref{example.promotion}.
\label{figure.right Cayley example}}
\end{figure}
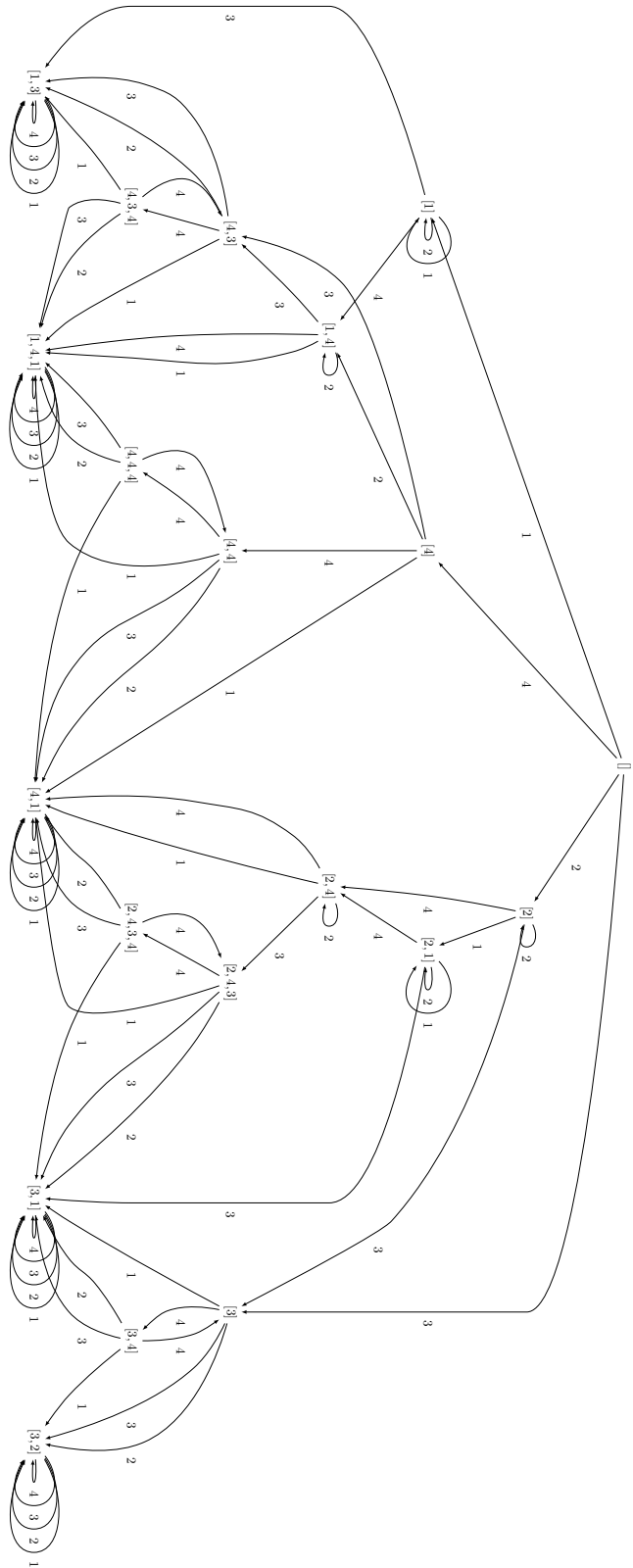
\end{example}

We prove some useful properties of the right Cayley graph of the semigroup $S$ generated by $\partial_i$ for 
$1\leqslant i \leqslant n$.

\begin{proposition}
\label{proposition.length reduced}
Any element in $K(S)$ can be written as $\partial_{w_1} \cdots \partial_{w_{n-1}}$, where 
$w_1,\ldots,w_{n-1} \in \{1,2,\ldots,n\}$ are distinct. In particular, the length of any reduced word for the elements 
in $K(S)$ is less than $n$.
\end{proposition}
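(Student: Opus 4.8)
The plan is to run the Tsetlin-library argument of Section~\ref{section.tsetlin} in the presence of the comparabilities of $P$. The starting point is a combinatorial description of a generator: $\partial_a$ acts on a linear extension $\pi$ by the position--promotion $\tau_1\cdots\tau_{j-1}$ at the position $j=\pi^{-1}(a)$ of the value $a$, which drags $a$ to the left past every entry incomparable to it and, once $a$ is blocked by an entry $b\prec a$, recursively promotes the prefix ending in $b$. The clean case is that when $a$ is a \emph{minimal} element of $P$ nothing blocks $a$, the recursion is vacuous, and $\partial_a$ is exactly the Tsetlin move ``send the value $a$ to the front''; its image is then $\{\,a\rho \mid \rho\in\mathcal{L}(P\smallsetminus a)\,\}$, and any word whose leftmost letter is $\partial_a$ represents a map whose image consists only of linear extensions beginning with $a$.

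The main work is an induction on $n=|P|$, the case $n=1$ being trivial. Fix a minimal element $a_0$ of $P$ and put $P'=P\smallsetminus a_0$; by the inductive hypothesis every $c_{\sigma'}\in K(S(P'))$ is a product $\partial_{w_1}\cdots\partial_{w_{n-2}}$ of $n-2$ distinct generators of $S(P')$. The key point is that these $S(P')$--moves lift into $S$: on a state already beginning with $a_0$ the $S$--generator $\partial_b$ with $b\in P'$ acts on the tail as the $S(P')$--generator $\partial_b$, the only discrepancy being a possible excursion of $b$ across $a_0$ into position $1$, which is repaired by one more application of $\partial_{a_0}$. Bracketing the lifted word $\partial_{w_1}\cdots\partial_{w_{n-2}}$ by $\partial_{a_0}$ on both sides (the right-hand $\partial_{a_0}$ to put every state in the form $a_0\rho$, the left-hand one to reinstate $a_0$ in front at the end) gives $c_\sigma$ for $\sigma=a_0\sigma'$; collapsing the two occurrences of $\partial_{a_0}$ into one --- legitimate once one verifies that the remaining letters already pin all but the position of $a_0$ --- rewrites $c_\sigma$ as a product of the $n-1$ distinct generators $\partial_{a_0},\partial_{w_1},\dots,\partial_{w_{n-2}}$. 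Letting $a_0$ run over the minimal elements of $P$ covers every $\sigma\in\mathcal{L}(P)$, hence all of $K(S)$; and ``every reduced word for an element of $K(S)$ has length $<n$'' follows at once, since each such element now has a representative of length $n-1$.

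The step I expect to be the genuine obstacle is controlling the cascade when $a$ is not minimal, i.e.\ establishing the monotonicity on which the induction rests: each newly introduced distinct generator must \emph{pin} at least one further coordinate of the linear extension, and a pinned coordinate must never be released by a later generator. For the Tsetlin moves this is transparent --- a value sent to the front stays ahead of the values not yet touched --- but with obstructions present it must be certified by an auxiliary invariant, for instance the set of coordinates on which the image of the current word is constant, or a decreasing statistic in the spirit of Lemma~\ref{lemma.statisticbound}, which one then checks is monotone under $\partial_a$ for every $a$, the cascading ones included. Making this invariant precise enough that the passage from $P$ to $P\smallsetminus a_0$ closes the induction --- in particular that the ``excursions across $a_0$'' in the middle of the word really are harmless --- is the heart of the matter.
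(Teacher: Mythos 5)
There is a genuine gap, and it sits exactly where you place it: the ``excursions across $a_0$'' are \emph{not} harmless, and the choice of a \emph{minimal} element is what causes the trouble. Your reduction needs the deletion map $\sigma \mapsto \sigma\setminus a_0$ to intertwine the $\partial_b$-action on $\mathcal{L}(P)$ with the $\partial_b$-action on $\mathcal{L}(P\setminus a_0)$ for $b\neq a_0$, at least up to corrections by $\partial_{a_0}$ that can be pushed to the ends of the word. It does not. Take $P$ on $\{1,2,3,4\}$ with the single relation $1\prec 2$ and $a_0=1$. From the state $3142$ (which is exactly the post-excursion configuration with $a_0$ in position $2$), the cascade of $\partial_2$ runs $3142\to 3124\to 3124\to 1324$: the value $2$ is blocked by $1$ via~\eqref{equation.tau}, and then $1$ itself is swept to the front. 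Deleting $1$ gives $324$, whereas $\partial_2$ applied in $P\setminus 1$ to $342$ gives $234$. A terminal application of $\partial_{a_0}$ cannot repair this ($1$ is already in front), so the repairs must be interleaved after every letter; the resulting word has up to $n-1$ occurrences of $\partial_{a_0}$ and length up to $2n-3$, and the ``collapsing'' step that would bring it back to $n-1$ distinct letters is precisely what you leave unproved. Since the bound on reduced-word length is the entire content of the proposition, the argument as it stands does not establish it.

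The paper's proof avoids this by running the induction from the other end of the poset: it reads off the word from the target $\pi$ itself, and the distinguished letter $w_1$ (the leftmost factor, hence the operator applied \emph{last}) is the top of a saturated chain starting at $\pi_1$, in particular a \emph{maximal} element of $P$. Maximality is the whole point: nothing lies above $w_1$, and everything below $w_1$ already sits to its left in every linear extension, so a cascading letter can only ever be incomparable to $w_1$ and hops over it. Thus $w_1$ never blocks and is never displaced by the other $n-2$ generators, deletion of $w_1$ intertwines the actions with no corrections at all, and the induction on $P\setminus w_1$ closes cleanly; the single final application of $\partial_{w_1}$ then performs one controlled cascade down the saturated chain, depositing $\pi_1$ in front. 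If you want to salvage your outline, you should replace ``strip a minimal element from the left end of the word'' by ``strip a maximal element from the right end of the target'' and prove the equivariance statement for maximal elements, which is where the real content lies.
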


\begin{proof}
Each element in $K(S)$ corresponds to a linear extension in $\mathcal{L}(P)$. For a given $\pi \in \mathcal{L}(P)$,
we now construct a word $w_1 \ldots w_{n-1}$ with distinct letters such that $\pi=\partial_{w_1} \cdots \partial_{w_{n-1}} \pi'$
for all $\pi' \in \mathcal{L}(P)$. In particular, this means that $\partial_{w_1} \cdots \partial_{w_{n-1}} \in K(S)$.

Write $\pi = \pi_1 \ldots \pi_n$ in one-line notation and set $\pi^{(1)} = \pi$.
Construct $\pi^{(m+1)}$ from $\pi^{(m)}$ for $1\leqslant m < n$ as follows.
Set $i^{(m)}_1=1$ and then recursively find the smallest $i^{(m)}_{j+1}>i^{(m)}_j$ 
such that $\pi^{(m)}_{i^{(m)}_j} \prec \pi^{(m)}_{i^{(m)}_{j+1}}$ if possible. If there is no such 
$i^{(m)}_{j+1}$, set $k^{(m)}=j$. Define $w_m=\pi^{(m)}_{i^{(m)}_{k^{(m)}}}$. Next construct $\pi^{(m+1)}$ from $\pi^{(m)}$ 
by removing $\pi^{(m)}_1$ and replacing $\pi^{(m)}_{i^{(m)}_j}$ by $\pi^{(m)}_{i^{(m)}_{j-1}}$ for 
$2\leqslant j \leqslant k^{(m)}$.

Next we show that $\pi = \partial_{w_1} \cdots \partial_{w_{n-1}} \pi'$ for any $\pi'\in \mathcal{L}(P)$, proving that
$\partial_{w_1} \cdots \partial_{w_{n-1}} \in K(S)$ corresponding to the linear extension $\pi$. We will do so by
induction on $n$. For $n=2$, $P$ is either the antichain with vertex 1 incomparable to vertex 2 or 2 is bigger than 1.
In the first case, there are two linear extension $\pi=12$ or $21$. The algorithm determines $w=\pi_1$ and indeed
$\partial_{\pi_1}(12)=\partial_{\pi_1}(21)=\pi_1\pi_2=\pi$. In the second case, there is only one linear extension
$\pi=12$ and the algorithm determines $w=2$. Indeed $\partial_2(12)=12$.

Now assume by induction that the algorithm works for posets with strictly less than $n$ vertices.
In particular, for $\pi^{(2)}$ from the algorithm $\pi^{(2)} = \partial_{w_2}\cdots \partial_{w_{n-1}} \pi'$
for any linear extension $\pi'$ of the poset $P'$ obtained from $P$ by deleting the vertex $w_1$. 
Also, by induction $w_2,\ldots,w_{n-1}$ are distinct and different from $w_1$. Note that $w_1$
is a maximal element in $P$. Hence for any linear extension $\pi'$ of $P$, we have that 
$\partial_{w_2}\cdots \partial_{w_{n-1}} \pi'$ is a linear extension of $P$ such that removing the letter $w_1$ 
results in $\pi^{(2)}$. Let $\sigma \in \mathcal{L}(P)$ be such a linear extension, that is, $\sigma \setminus w_1 = \pi^{(2)}$.
Consider the saturated chain $\pi_1=a_1 \prec a_2 \prec \cdots \prec a_k = w_1$ in $P$ from $\pi_1$ to $w_1$.
Such a chain exists by the definition of $w_1$.
In $\pi^{(2)}$ and hence also in $\sigma$ the letter $a_{k-1}$ is the rightmost letter that is covered in $P$ by $w_1$.
This is since by the algorithm to construct $\pi^{(2)}$, the letter $a_{k-1}$ replaced the letter $a_k=w_1$ in $\pi$.
In $\sigma$, the letter $w_1$ must sit to the right of the letter $a_{k-1}$ since $a_{k-1} \prec w_1$.
Hence, when acting with $\partial_{w_1}$ on $\sigma$, the letter $w_1$ interchanges with all letters to its left
until it reaches the letter $a_{k-1}$. By the action of $\tau_i$ as in~\eqref{equation.tau}, the letter $w_1$ will stay
in the position where $a_{k-1}$ was in $\sigma$ and then the letter $a_{k-1}$ starts moving left. The letter $a_{k-2}$
is the rightmost letter in $\sigma$ that is covered by $a_{k-1}$ in $P$, again by the definition of the algorithm.
The letter $a_{k-1}$ replaces the letter $a_{k-2}$ and $a_{k-2}$ starts moving left and so on. Finally, the letter
$a_1=\pi_1$ moves into first position. Hence $\partial_{w_1} \sigma = \pi$. This proves the claim.
\end{proof}

\begin{example}
Take the poset from Example~\ref{example.promotion} and the linear extension $\pi = 1243$. Set $\pi^{(1)}=\pi$.
The first sequence of increasing entries in $\pi^{(1)}$ is given by the underlined entries
\[
	\textcolor{darkred}{\underline{1}}2 \textcolor{darkred}{\underline{4}}3.
\]
Hence $w_1=4$ and $\pi^{(2)}=213$. The next sequence of increasing entries is given by
\[
	\textcolor{darkred}{\underline{2}} 1 \textcolor{darkred}{\underline{3}}.
\]
Hence $w_2=3$ and $\pi^{(3)}=12$. Next we find the increasing sequence $\textcolor{darkred}{\underline{1}}2$, so
that $w_3=1$. Indeed, comparing with Figure~\ref{figure.right Cayley example}, we see that 
\[
	\partial_4 \partial_3 \partial_1 = \partial_1 \partial_4 \partial_1
\]
is in $K(S)$.

Note that the above algorithm does not always give a shortest path to the ideal in the right Cayley graph.
For example, if $\pi=2143$ the algorithm gives
\[
	\textcolor{darkred}{\underline{2}}1\textcolor{darkred}{\underline{4}} 3 \to
	\textcolor{darkred}{\underline{1}}23 \to
	\textcolor{darkred}{\underline{2}}\textcolor{darkred}{\underline{3}} \to
	2,
\]
so that $w_1 w_2 w_3 = 413$. From Figure~\ref{figure.right Cayley example}, we see that
$\partial_4 \partial_1 \partial_3 = \partial_4 \partial_1$ is in $K(S)$.
\end{example}

The (unnormalized) stationary distribution of the promotion Markov chain was computed
in~\cite[Theorem 4.5]{AyyerKleeSchilling.2014}. Recall that our conventions are different 
from~\cite{AyyerKleeSchilling.2014}.

\begin{theorem}{\cite[Theorem 4.5]{AyyerKleeSchilling.2014}}
\label{theorem.stationary promotion}
The (unnormalized) stationary distribution for the promotion Markov chain $\Psi_\pi$ for
$\pi \in \mathcal{L}(P)$ for a finite poset $P$ with $n=|P|$ is given by
\begin{equation}
\label{equation.stationary linear extension}
	\Psi_\pi = \prod_{i=1}^n \frac{1}{1-(x_{\pi_1} + \cdots + x_{\pi_{i-1}})}.
\end{equation}
\end{theorem}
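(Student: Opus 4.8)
The plan is to re-derive \eqref{equation.stationary linear extension} inside the semigroup framework of Section~\ref{section.rational}, treating the promotion chain as the poset analogue of the Tsetlin library of Section~\ref{section.tsetlin}. First I would pass to the ``random letter'' form of the chain: rather than indexing a generator by a position $j$, index it by the element $a\in[n]$ that is promoted, so that the generator $a$ acts on $\pi\in\mathcal L(P)$ by bubbling the entry $a$ as far to the left as the comparabilities of $P$ allow, and, by \eqref{equation.promotion tau}, this happens with the fixed probability $x_a$; for the antichain this is literally ``move book $a$ to the front'', i.e.\ the Tsetlin library. Let $S$ be the finite monoid generated by these $n$ maps. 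By Proposition~\ref{proposition.length reduced} its minimal ideal $K(S)$ is in bijection with $\mathcal L(P)$ and is a left zero semigroup, so Theorem~\ref{theorem.stationary} applies and $\Psi_\pi=\sum_{[s]_S=e_\pi}\prod_{a\in s}x_a$, the sum over semaphore code words $s$ that spell the constant map $e_\pi$ onto $\pi$.

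The main work is then to evaluate these sums through the loop graphs of \eqref{equation.lumping}. Guided by the Tsetlin case I would aim to show that the loop graphs contributing to $\Psi_\pi$ reduce to a single staircase graph $G_\pi$ whose Kleene expression has the same shape as in Section~\ref{section.tsetlin}, so that reading it off gives
\[
	\Psi_\pi=\frac{x_{\pi_1}x_{\pi_2}\cdots x_{\pi_{n-1}}}{(1-x_{\pi_1})(1-x_{\pi_1}-x_{\pi_2})\cdots(1-x_{\pi_1}-\cdots-x_{\pi_{n-2}})}.
\]
Using $1-(x_{\pi_1}+\cdots+x_{\pi_{n-1}})=x_{\pi_n}$ and $x_{\pi_1}\cdots x_{\pi_n}=x_1\cdots x_n$, the right-hand side equals $x_1\cdots x_n$ times the right-hand side of \eqref{equation.stationary linear extension}; since $x_1\cdots x_n$ does not depend on $\pi$ and the theorem only asserts the stationary distribution up to normalization, \eqref{equation.stationary linear extension} follows. (As a sanity check, for the antichain this collapses to the Hendricks--Fill formula \eqref{equation.psi Tsetlin}.) Concretely, the spine of $G_\pi$ should be the path traced by the distinguished reduced word $w_1\cdots w_{n-1}$ produced in the proof of Proposition~\ref{proposition.length reduced}, and the loop attached after its $k$-th internal vertex should consist of exactly those generators whose promotion, applied at that stage, no longer disturbs the part of $\pi$ already pinned down, making its total weight $x_{\pi_1}+\cdots+x_{\pi_k}$.

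The hard part will be exactly this structural claim for a general poset. For an antichain, re-promoting an already-placed element is literally a no-op, so the loop sets along the spine are transparent; in general, promoting an element $a$ comparable to a pinned entry can still move $a$, and one must argue --- leaning on the saturated-chain analysis already carried out in the proof of Proposition~\ref{proposition.length reduced} --- that in the Karnofsky--Rhodes and McCammond expansion the loops at the $k$-th stage are precisely the $k$ re-promotions of the already-fixed elements and that the expansion does not split the contribution to $\Psi_\pi$ among several loop graphs. If this bookkeeping becomes unwieldy, the fallback is a direct verification: the promotion chain on $\mathcal L(P)$ is ergodic, so by Perron--Frobenius it suffices to check that the right-hand side of \eqref{equation.stationary linear extension} is positive (clear, since $1-(x_{\pi_1}+\cdots+x_{\pi_{i-1}})=x_{\pi_i}+\cdots+x_{\pi_n}>0$) and satisfies the balance equations $\Psi_\sigma=\sum_{a\in[n]}x_a\sum_{\pi\,:\,a.\pi=\sigma}\Psi_\pi$ for all $\sigma\in\mathcal L(P)$; this should yield to induction on $n$, peeling off the maximal element occupying the last slot of $\sigma$ so that $\Psi_\pi$ factors as $x_{\pi_n}^{-1}$ times the corresponding formula for the poset $P\setminus\{\pi_n\}$, and it is in essence the route of \cite[Theorem~4.5]{AyyerKleeSchilling.2014}, after the mirror-image change of conventions recorded just below \eqref{equation.promotion tau}.
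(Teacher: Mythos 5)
The paper does not actually prove this statement: Theorem~\ref{theorem.stationary promotion} is imported verbatim from \cite[Theorem~4.5]{AyyerKleeSchilling.2014} (with the mirror-image convention for $\partial_j$), so the only ``proof'' on offer here is the citation. Your fallback route --- check positivity and verify the balance equations $\Psi_\sigma=\sum_a x_a\sum_{\pi:a.\pi=\sigma}\Psi_\pi$ directly, by induction on $n$ peeling off the maximal element $\sigma_n$ --- is sound and is essentially the argument of Ayyer--Klee--Schilling, so the proposal is salvageable. But you relegate the workable argument to a fallback and spend the main effort on a route that cannot succeed.

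The gap in your primary route is that the promotion monoid is \emph{not} the Tsetlin library in disguise, and the paper says so explicitly in the paragraph immediately following the theorem: the $\mathscr{R}$-classes of the right Cayley graph can be large (Figure~\ref{figure.right Cayley example} already has a nontrivial strongly connected component on $\{[4,4],[4,4,4]\}$), so the semigroup is not $\mathscr{R}$-trivial, the McCammond expansion is genuinely needed, and the lumping~\eqref{equation.lumping} spreads $\Psi_\pi$ over many loop graphs rather than collapsing to a single staircase $G_\pi$. More decisively, the paper records that the expression~\eqref{equation.stationary linear extension} does \emph{not} have the property that its degree-$\ell$ terms correspond to semaphore code words of length $\ell$ --- but that property is exactly what a derivation via Theorem~\ref{theorem.stationary} plus a Kleene expression would force. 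So your structural claim that the loop attached at the $k$-th spine vertex has total weight $x_{\pi_1}+\cdots+x_{\pi_k}$ (true for the antichain, where re-promoting a placed element is a genuine self-loop in $\mathsf{RCay}$) fails for general $P$: promoting an element comparable to already-pinned entries need not fix the semigroup element, and the saturated-chain analysis in Proposition~\ref{proposition.length reduced} controls only one distinguished reduced word, not the full set of paths into $K(S)$. Drop the loop-graph derivation and promote the balance-equation induction to the main argument; that is the proof the cited theorem actually has.
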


Despite the fact that by Proposition~\ref{proposition.length reduced} the right Cayley graph is shallow in the sense
that each vertex is at most $n-1$ steps away from the minimal ideal and the existence of an explicit formula for the stationary
distribution, this is not enough to give a tight bound on the mixing time. The reason is that the expression for
$\Psi_\pi$ does not have the property required in Theorems~\ref{theorem.main} and~\ref{theorem.main1}
that each term of degree $\ell$ in its formal power sum expansion corresponds to a semaphore code word $s$ of 
length $\ell$. Furthermore, the $\mathscr{R}$-classes (or strongly connected components) in the right Cayley graph 
can become very big, especially when $P$ has a maximal element. This makes it hard to analyze the mixing time for the 
promotion Markov chain in general. Here we propose a new Markov chain on linear extensions of a poset
which gives rise to an $\mathscr{R}$-trivial semigroup (where all strongly connected components have size one).

\subsubsection{A variant of the promotion Markov chain}
As before let $P$ be a poset with $n$ elements and $\mathcal{L}(P)$ the set of linear extensions of $P$.
Denote by $\mathcal{W}(P)$ the set of subwords of linear extensions in $\mathcal{L}(P)$ and set $A=[n]$. 
We define a semigroup on $\mathcal{W}(P)$ as follows. Let $w \in \mathcal{W}(P)$ and $a\in A$. Then define
\begin{equation}
\label{equation.product}
	wa = \begin{cases} w & \text{if $a\in w$,}\\
	\mathsf{straight}(wa) & \text{if $a \not \in w$.}
	\end{cases}
\end{equation}
Here $\mathsf{straight}(wa)$ is defined as follows. If $wa$ is a subword of a linear extension of $P$, then
$\mathsf{straight}(wa)=wa$. If not, write $w=w_1\ldots w_k$ and find the largest $1\leqslant j_1\leqslant k$
such that $a \prec w_{j_1}$ in $P$. Interchange $w_{j_1}$ and $a$. Repeat by finding the largest $1\leqslant j_2 <j_1$
such that $a \prec w_{j_2}$. Interchange $w_{j_2}$ and $a$. Repeat until no further element bigger than $a$ exists to the
left. The result is $\mathsf{straight}(wa)$.

\begin{example}
Take the poset $P$ of Example~\ref{example.promotion}, $w=234 \in \mathcal{W}(P)$, and $a=1$. We have
$1 \prec 4$, so $j_1=3$. Both $2$ and $3$ are incomparable to $1$, so we find $\mathsf{straight}(wa) = 2314
\in \mathcal{L}(P)$.
\end{example}

\begin{lemma}
\label{lemma.straighten}
Let $a\in A$ and $w\in \mathcal{W}(P)$ such that $a\not \in w$. Then $\mathsf{straight}(wa) \in \mathcal{W}(P)$.
\end{lemma}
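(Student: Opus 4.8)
The plan is to show that each individual interchange performed by the $\mathsf{straight}$ procedure preserves the property of being a subword of some linear extension of $P$, so that the final word $\mathsf{straight}(wa)$ is again in $\mathcal{W}(P)$. First I would fix notation: write $w = w_1 \ldots w_k$ with $a \notin w$, and observe that since $w \in \mathcal{W}(P)$ there is a linear extension $\sigma \in \mathcal{L}(P)$ of which $w$ is a subword. Because $a \notin w$, the letter $a$ appears somewhere in $\sigma$, say at position $p$. The key structural fact I would establish is that in $\sigma$ the letter $a$ cannot lie below (to the left of, as a linear extension constraint) any letter $w_j$ with $w_j \prec a$ other than through the chain already recorded — more precisely, if $w_j \prec a$ in $P$ then $w_j$ precedes $a$ in $\sigma$, and if $a \prec w_j$ then $a$ precedes $w_j$ in $\sigma$. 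From this, among the letters of $w$ that are comparable to $a$, the ones $\prec a$ all sit (in $\sigma$) before the ones $\succ a$.

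The main step is then an induction on the number of interchanges. After locating the largest index $j_1$ with $a \prec w_{j_1}$, I would argue that the word $w_1 \ldots w_{j_1-1}\, a\, w_{j_1}\, w_{j_1+1} \ldots w_k$ is still a subword of \emph{some} linear extension of $P$: one builds a linear extension by taking $\sigma$, deleting $a$ from its old position $p$, and reinserting it immediately before $w_{j_1}$; I must check this reinsertion still respects all cover relations of $P$, which follows because every element below $a$ in $P$ that appears among $w_1,\ldots,w_{j_1}$ actually appears among $w_1,\ldots,w_{j_1-1}$ (by maximality of $j_1$ and the comparability ordering noted above), and every element above $a$ that has been passed over is $w_{j_1}$ itself or lies further right. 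Iterating this argument — each successive interchange moves $a$ left past one more element that is $\succ a$, and the invariant "current word is a subword of a linear extension" is maintained — terminates after finitely many steps with $\mathsf{straight}(wa) \in \mathcal{W}(P)$.

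I expect the main obstacle to be the bookkeeping in verifying that reinserting $a$ just before $w_{j_1}$ (and later before $w_{j_2}$, etc.) never violates a cover relation $c \prec a$ for some $c$ lying to the \emph{right} of the insertion point. This is where the precise definition of $\mathsf{straight}$ — always choosing the \emph{largest} remaining index $j_m$ with $a \prec w_{j_m}$ and stopping when no larger element remains to the left — does the work: after the procedure halts, every element of $w$ to the left of $a$'s final position is incomparable to $a$ or below $a$, and every element to the right that is comparable to $a$ is above $a$, which is exactly the local condition needed for a subword of a linear extension. I would isolate this as the crux of the argument and present it carefully, leaving the surrounding induction as routine.
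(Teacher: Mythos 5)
There is a genuine gap, and it begins with a misreading of the operation. The definition of $\mathsf{straight}$ \emph{interchanges} $a$ and $w_{j_1}$: after the first step $a$ occupies position $j_1$ and $w_{j_1}$ moves to the end of the word, giving $w_1\cdots w_{j_1-1}\,a\,w_{j_1+1}\cdots w_k\,w_{j_1}$. Your intermediate word $w_1\cdots w_{j_1-1}\,a\,w_{j_1}\,w_{j_1+1}\cdots w_k$ leaves $w_{j_1}$ in place, and the two procedures produce genuinely different outputs: for $k=3$ with $a\prec w_1$, $a\prec w_3$, and $a$ incomparable to $w_2$, the paper's $\mathsf{straight}(wa)$ is $a\,w_2\,w_1\,w_3$, while your procedure yields $a\,w_1\,w_2\,w_3$. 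So even if your argument were completed, it would establish membership in $\mathcal{W}(P)$ for the wrong word. The entire content of the paper's proof is precisely the piece you never touch: by maximality of $j_1$ and transitivity, every $w_j$ with $j_1<j\leqslant k$ is incomparable to $a$ \emph{and to $w_{j_1}$} (if $w_j\prec a$ then $w_j\prec w_{j_1}$, contradicting $w\in\mathcal{W}(P)$; if $w_{j_1}\prec w_j$ then $a\prec w_j$, contradicting maximality of $j_1$), and this incomparability is what licenses sending $w_{j_1}$ rightward past all of them without creating an inversion.

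Separately, the construction ``delete $a$ from $\sigma$ and reinsert it immediately before $w_{j_1}$'' fails as stated. Since $a\prec w_{j_1}$ and $\sigma$ is a linear extension, $a$ already sits to the \emph{left} of $w_{j_1}$ in $\sigma$, so your reinsertion moves $a$ to the \emph{right}, past any element $c$ of $\sigma$ with $a\prec c$ lying between $a$'s old position and $w_{j_1}$. Whenever a second index $j_2<j_1$ with $a\prec w_{j_2}$ exists, $w_{j_2}$ is such a $c$; your intermediate word then contains $w_{j_2}$ before $a$ with $a\prec w_{j_2}$, i.e., an inversion, so it is not a subword of any linear extension and your induction invariant is false at intermediate steps. (The paper sidesteps this by only asserting consistency of the suffix $a\,w_{j_1+1}\cdots w_k\,w_{j_1}$ and treating the earlier segments in subsequent steps.) Since membership in $\mathcal{W}(P)$ is equivalent to the absence of inversions (no later letter lying below an earlier one in $P$), the clean repair is the paper's purely order-theoretic route: verify that each interchange preserves this property, with no need to manipulate an ambient linear extension $\sigma$ at all.
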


\begin{proof}
Since $j_1$ is largest such that $a \prec w_{j_1}$, either $w_j \prec a$ or $w_j$ and $a$ are incomparable for 
$j_1<j\leqslant k$. If $w_j \prec a$ by transitivity we find that $w_j \prec w_{j_1}$ which contradicts the fact that 
$w \in \mathcal{W}(P)$. Hence $a$ is incomparable with $w_j$ for all $j_1<j\leqslant k$. 
Suppose $w_{j_1} \prec w_j$ for some $j_1<j\leqslant k$. Then again by transitivity, we have $a\prec w_j$.
This contradicts the maximality of $j_1$. Hence $w_{j_1}$ is incomparable to $w_j$ for all $j_1<j\leqslant k$.
Therefore $a w_{j_1+1} \cdots w_k w_{j_1} \in \mathcal{W}(P)$. Repeating similar arguments for the next segments
(interchanging $a$ with $w_{j_2}$ etc), we find $\mathsf{straight}(wa) \in \mathcal{W}(P)$.
\end{proof}

\begin{proposition}
The set $\mathcal{W}(P)$ together with the product defined in~\eqref{equation.product} forms a semigroup.
\end{proposition}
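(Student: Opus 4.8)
The plan is to check first that $\mathcal{W}(P)$ is closed under the operation and then to prove associativity, which I expect is the only real work. Closure is immediate: for $w\in\mathcal{W}(P)$ and $a\in A$, the word $wa$ of~\eqref{equation.product} lies in $\mathcal{W}(P)$ because if $a\in w$ this is trivial, and if $a\notin w$ it is Lemma~\ref{lemma.straighten}. The product of two elements is then defined in the only sensible way, $w\cdot(v_1\cdots v_\ell):=(\cdots((w v_1)v_2)\cdots)v_\ell$ with $w\cdot\mathbbm{1}:=w$ (the empty word $\mathbbm{1}$ belongs to $\mathcal{W}(P)$), and closure for arbitrary products follows by iterating. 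More generally $x\cdot u\in\mathcal{W}(P)$ is defined for every $x\in\mathcal{W}(P)$ and every word $u\in A^\star$, and unwinding this definition gives the bookkeeping identity $x\cdot(uv)=(x\cdot u)\cdot v$ for concatenation of words $u,v$; this is not yet associativity of $\cdot$, but it is what lets me reduce associativity to a single-letter statement.

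Next I would reduce associativity. We must show $(x\cdot w)\cdot v=x\cdot(w\cdot v)$ for all $x,w,v\in\mathcal{W}(P)$, where on each side the parenthesized product is performed first. Inducting on $|v|$ and using the bookkeeping identity, it suffices to treat $v=a$ a single letter: if $v=v'a$ then
\[
 x\cdot(w\cdot v)=x\cdot\bigl((w\cdot v')\cdot a\bigr)=\bigl(x\cdot(w\cdot v')\bigr)\cdot a=\bigl((x\cdot w)\cdot v'\bigr)\cdot a=(x\cdot w)\cdot v,
\]
where the second equality is the single-letter case applied to $w\cdot v'\in\mathcal{W}(P)$ and the third is the inductive hypothesis. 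So the proposition comes down to the single-letter identity
\[
 (x\cdot w)\cdot a=x\cdot(w\cdot a)\qquad\text{for all }x,w\in\mathcal{W}(P),\ a\in A.
\]
If $a\in w$ this is clear since $w\cdot a=w$. If $a\notin w$, then $w\cdot a=\mathsf{straight}(wa)$, and the identity asserts that multiplying $x$ successively by $w_1,\dots,w_{|w|}$ and then by $a$ has the same effect as multiplying $x$ successively by the letters of $\mathsf{straight}(wa)$.

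I would prove this last identity by induction on $|w|$. For $|w|=1$, writing $w=b$, the only nontrivial case is $a\prec b$, where $w\cdot a=ab$ and the identity becomes the commutation $(x\cdot b)\cdot a=(x\cdot a)\cdot b$. Splitting according to whether $a$ or $b$ already occurs in $x$ reduces this to the purely combinatorial claim that, when $a\prec b$, applying $\mathsf{straight}$ to insert $b$ and then $a$ into $x$ produces the same word as inserting $a$ and then $b$; this I would read off from the explicit form of $\mathsf{straight}$ (the new letter is placed just left of the leftmost letter lying strictly above it in $P$, the larger letters being shifted accordingly) together with transitivity of $\prec$, exactly in the spirit of the proof of Lemma~\ref{lemma.straighten}. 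For the inductive step one peels off the last letter $w_{|w|}$ of $w$, relates $\mathsf{straight}(wa)$ to $\mathsf{straight}(w'a)$ for the prefix $w'$ and to the position of $w_{|w|}$ among the letters exceeding $a$, uses the base case to commute $a$ past $w_{|w|}$, and then applies the inductive hypothesis to $w'$; after this bookkeeping the two sides agree.

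The main obstacle is precisely this single-letter identity, and within it the commutation statement for $\mathsf{straight}$: one must verify that pushing the incoming letter through a word commutes, in the right sense, with the reshuffling that $\mathsf{straight}$ performs on the letters exceeding it, and this is the only place where the order structure of $P$ (through transitivity) is genuinely used. An equivalent packaging, which I would mention, is to realise $\mathcal{W}(P)$ as a transformation monoid: the maps $R_u\colon x\mapsto x\cdot u$ for $u\in A^\star$ satisfy $R_{uv}=R_v\circ R_u$ and hence form a submonoid of the transformation monoid of $\mathcal{W}(P)$, the assignment $w\mapsto R_w$ is injective because $\mathbbm{1}\cdot w=w$, and associativity of $\cdot$ is then equivalent to $R_{w\cdot v}=R_{wv}$, which after the same length induction is once more the single-letter identity above. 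Either way, once that identity is established the proposition follows formally.
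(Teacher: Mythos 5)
Your proposal is correct in substance but organizes the argument quite differently from the paper. The paper's proof is a short, direct confluence argument: it invokes the observation from the proof of Lemma~\ref{lemma.straighten} that the letters lying between two interchanged letters are incomparable to them, and then uses transitivity of $\prec$ to note that when a chain $w_k\prec w_j\prec w_i$ occurs in reverse order, the elementary interchanges can be performed in any order with the same outcome; this order-independence of the interchanges is what makes the product independent of how the factors are grouped. You instead build the formal scaffolding explicitly --- extend the single-letter product of~\eqref{equation.product} to words, record the concatenation identity $x\cdot(uv)=(x\cdot u)\cdot v$, reduce associativity by induction on length to the single-generator identity $(x\cdot w)\cdot a=x\cdot(w\cdot a)$, and reduce that in turn to the commutation $(x\cdot b)\cdot a=(x\cdot a)\cdot b$ for $a\prec b$ --- which is the standard right-regular-representation route and makes precise a reduction the paper leaves entirely implicit (the paper never addresses the fact that $u\cdot(v\cdot w)$ feeds the letters to $u$ in a different order than $(u\cdot v)\cdot w$ does). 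What your route buys is a clean isolation of exactly which combinatorial fact must be checked; what it does not do is actually check it: the commutation of two $\mathsf{straight}$-insertions and the bookkeeping in your inductive step (relating $\mathsf{straight}(wa)$ to $\mathsf{straight}(w'a)$ after peeling off the last letter of $w$) are described but not verified. That verification is precisely where transitivity of $\prec$ and the incomparability statement from Lemma~\ref{lemma.straighten} must enter, and it is the same computation the paper compresses into its one-sentence transitivity remark --- so on the crucial point your sketch sits at essentially the paper's level of rigor, while being considerably more careful about the surrounding reductions.
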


\begin{proof}
Note that by the proof of Lemma~\ref{lemma.straighten}, the letters inbetween any letters that are interchanged
by the product are incomparable to the interchanged letters. By transitivity, if there are three letters that are interchanged,
say $w_i \ldots w_j \ldots w_k$ with $w_k \prec w_j \prec w_i$, it does not matter in which order this is done, the end
results is $w_k \ldots w_j \ldots w_i$. This proves that the product is associative and hence $\mathcal{W}(P)$ is a 
semigroup with the product in~\eqref{equation.product}.
\end{proof}

Let us now define $(\mathcal{W}(P),A)$ to be the semigroup with product~\eqref{equation.product} and generators
$A=[n]$.

\begin{theorem}
The semigroup $(\mathcal{W}(P),A)$ is $\mathcal{R}$-trivial.
\end{theorem}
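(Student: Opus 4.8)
The plan is to show that the right Cayley graph of $(\mathcal{W}(P),A)$ has no nontrivial strongly connected components, i.e., that $wu = w$ for $u \in A^+$ forces every intermediate product along $u$ to equal $w$ as well; equivalently, that the only way to return to $w$ by right multiplication is to never leave $w$. The natural tool is a \emph{monotone statistic} on $\mathcal{W}(P)$ that is weakly decreasing under right multiplication by generators and strictly decreasing whenever the product genuinely changes the state. If such a statistic exists, then $wu=w$ with $u=a_1\cdots a_m$ gives a chain $w \succeq wa_1 \succeq wa_1a_2 \succeq \cdots \succeq wu = w$ in the statistic, forcing equality everywhere, and then a secondary (say, lexicographic or length) argument pins down that the words themselves never changed; this is precisely $\mathscr{R}$-triviality.

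\textbf{Choice of statistic.} First I would observe that the product~\eqref{equation.product} only ever does two things to $w$: it leaves $w$ untouched (when $a \in w$), or it appends $a$ and then slides $a$ leftward past letters strictly above it in $P$. So the underlying \emph{set} of letters of $w$ is weakly increasing, and once it increases the word length strictly increases. Thus $f(w) := n - |\{\text{letters appearing in } w\}|$ is weakly decreasing under right multiplication, and strictly decreasing exactly when a new generator is absorbed. This handles every step of $u$ that absorbs a new letter. The remaining case is a step $wa$ with $a\in w$, which by~\eqref{equation.product} returns $w$ verbatim — so it changes nothing at all. Therefore along any word $u$ with $wu=w$, no step can absorb a new letter (that would strictly drop $f$ and it could never come back up), and every step is of the trivial kind $wa=w$. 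Hence $wu = wa_1 = wa_1a_2 = \cdots = w$, which is exactly the statement that distinct $\mathscr{R}$-classes are singletons.

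\textbf{Key steps, in order.} (1) Recall from Lemma~\ref{lemma.straighten} that the product is well-defined, so $\mathcal{W}(P)$ is closed and the Cayley graph makes sense. (2) Establish the dichotomy: for $w\in\mathcal{W}(P)$ and $a\in A$, the letter-set of $wa$ either equals that of $w$ (when $a\in w$, in which case $wa=w$) or is that of $w$ with $a$ added (when $a\notin w$); in the latter case $|wa| = |w|+1$. This is immediate from~\eqref{equation.product} and the fact that $\mathsf{straight}$ is a permutation of the letters of $wa$. (3) Deduce that $f(w) = n - \#(\text{support of }w)$ satisfies $f(wa)\le f(w)$ with equality iff $a\in w$ iff $wa=w$. (4) Conclude $\mathscr{R}$-triviality: if $w \,\mathscr{R}\, w'$ in $S$, pick $u,v$ with $wu=w'$ and $w'v=w$; then $f(w)\ge f(w')\ge f(w)$ forces $f(w)=f(w')$, which forces every generator in $u$ to already lie in $w$, hence $w' = wu = w$.

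\textbf{Main obstacle.} The one place needing care is that $\mathscr{R}$-triviality is about the \emph{monoid} $\mathscr{R}$-order, so one must argue with arbitrary words $u \in A^+$, not just single generators, and confirm there is no subtler way for the support to oscillate. But the support is manifestly monotone nondecreasing along \emph{any} word (step 2 applied repeatedly), so a full loop $wu = w$ cannot have enlarged it at any intermediate point; this closes the argument. A secondary subtlety is whether the empty-word / identity conventions matter here — since $\mathcal{W}(P)$ need not contain an identity, one works in $S^{\mathbbm 1}$ as in the paper's convention, and the adjoined $\mathbbm 1$ is trivially its own $\mathscr{R}$-class, so it causes no trouble. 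I expect the whole proof to be short once the support statistic is in hand.
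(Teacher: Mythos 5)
Your proposal is correct and is essentially the paper's own argument: the paper uses the word length as the monotone statistic (weakly increasing under right multiplication, with equality forcing $wa=w$), which coincides with your support statistic since elements of $\mathcal{W}(P)$ have distinct letters. Your write-up just spells out in more detail the reduction from the $\mathscr{R}$-order on arbitrary words to the single-generator dichotomy.
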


\begin{proof}
In the product, the length of the word can either stay the same or increase. When the length stays the same, the word does
not change. This proves that $(\mathcal{W}(P),A)$ is $\mathcal{R}$-trivial.
\end{proof}

\begin{example}
The right Cayley graph of $(\mathcal{W}(P),A)$ for the poset of Example~\ref{example.promotion} is given in Figure~\ref{figure.poset}.
\end{example}

\begin{figure}
\begin{center}
\scalebox{0.25}{
\input{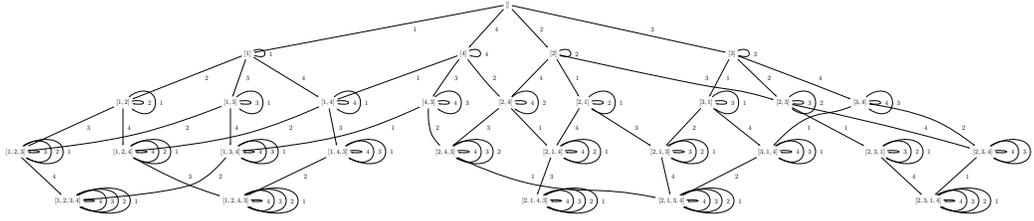}
}
\end{center}
\caption{The right Cayley graph of $(\mathcal{W}(P),A)$ for the poset of Example~\ref{example.promotion}.
\label{figure.poset}}
\end{figure}

Note that the minimal ideal of $(\mathcal{W}(P),A)$ is the set of linear extensions $\mathcal{L}(P)$ of the poset $P$.
Let $\mathcal{M}(\mathcal{W}(P),A)$ be the Markov chain on $\mathcal{L}(P)$ induced by the semigroup
$(\mathcal{W}(P),A)$. More precisely, we transition from $\pi \in \mathcal{L}(P)$ to $a\pi \in \mathcal{L}(P)$ with 
probability $x_a$. 

\begin{proposition}
$\mathcal{M}(\mathcal{W}(P),A)$  is ergodic.
\end{proposition}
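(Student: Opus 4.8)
The plan is to establish the two halves of ergodicity — irreducibility and aperiodicity — directly, exploiting how rigid the product~\eqref{equation.product} becomes on words that are already subwords of linear extensions.

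The one technical fact I would isolate first is a \emph{prefix observation}: if $v_1 v_2 \cdots v_k$ is a subword of a linear extension of $P$, then for every $j \leqslant k$ the prefix $v_1 \cdots v_j$ is again a subword of a linear extension, so $\mathsf{straight}(v_1 \cdots v_j) = v_1 \cdots v_j$. Combined with the first case of~\eqref{equation.product} and associativity, this yields two consequences. First, for $\pi = \pi_1 \cdots \pi_n \in \mathcal{L}(P)$ and any $a \in A$, the element $a\pi = [a]\cdot[\pi_1]\cdot[\pi_2]\cdots[\pi_n]$ is built by appending the letters $\pi_i$ one at a time (the step with $\pi_i = a$ doing nothing), and by Lemma~\ref{lemma.straighten} the result is a subword of a linear extension of length $n$, hence an element of $\mathcal{L}(P)$; thus $\mathcal{L}(P)$ is closed under the left action and $\mathcal{M}(\mathcal{W}(P),A)$ is genuinely a chain on $\mathcal{L}(P)$. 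Second, the same computation shows $\pi\sigma = \pi$ for all $\pi,\sigma\in\mathcal{L}(P)$, since every letter of $\sigma$ already occurs in $\pi$; so $\mathcal{L}(P) = K(\mathcal{W}(P))$ is a left zero semigroup. (At this point one could quote the general principle recorded in Section~\ref{section.upper bound} that a left zero minimal ideal makes the induced chain ergodic, but I would finish by hand.)

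For aperiodicity I would exhibit a loop at every state: with $\pi = \pi_1\cdots\pi_n$, one has $[\pi_1]\cdot[\pi_1] = [\pi_1]$, and appending $\pi_2,\ldots,\pi_n$ in order only produces prefixes of $\pi$, which $\mathsf{straight}$ fixes by the prefix observation; hence $\pi_1\pi = \pi$. Since $x_{\pi_1} > 0$, every state has a self-loop, so the gcd of cycle lengths is $1$.

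For irreducibility, given $\pi,\sigma\in\mathcal{L}(P)$ with $\sigma = \sigma_1\cdots\sigma_n$, I would apply the generators $\sigma_n,\sigma_{n-1},\ldots,\sigma_1$ to $\pi$ in this order; since the action is a left action, the resulting state is $\bigl([\sigma_1][\sigma_2]\cdots[\sigma_n]\bigr)\cdot\pi$. The prefix observation shows $[\sigma_1][\sigma_2]\cdots[\sigma_n] = \sigma_1\sigma_2\cdots\sigma_n = \sigma$, and then $\sigma\cdot\pi = \sigma$ because $K(\mathcal{W}(P))$ is left zero. So $\sigma$ is reached from $\pi$ in $n$ steps with probability $x_{\sigma_1}\cdots x_{\sigma_n} > 0$, and since $\pi,\sigma$ were arbitrary the chain is irreducible; together with aperiodicity this gives ergodicity. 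The only delicate point is the bookkeeping behind the prefix observation — one must confirm that every intermediate word appearing in the iterated products $a\pi$ and $[\sigma_1]\cdots[\sigma_n]$ really is a subword of some linear extension of $P$, so that $\mathsf{straight}$ acts trivially throughout; this is precisely Lemma~\ref{lemma.straighten} applied inductively, step by step.
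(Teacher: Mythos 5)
Your proof is correct and follows essentially the same route as the paper's: the left zero identity $\pi\sigma=\pi$ on $\mathcal{L}(P)$ yields irreducibility (reach $\sigma$ from $\pi$ in $n$ steps by applying its letters $\sigma_n,\ldots,\sigma_1$), and the self-loop $\pi_1\pi=\pi$ yields aperiodicity. You merely make explicit the prefix/straightening bookkeeping that the paper's two-sentence argument leaves implicit.
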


\begin{proof}
Note that $\pi \pi'=\pi$ for all $\pi,\pi'\in\mathcal{L}(P)$. Hence the graph of the Markov chain is strongly connected
and hence it is irreducible. Furthermore, if $\pi = \pi_1\ldots \pi_n \in \mathcal{L}(P)$, then $\pi_1 \pi = \pi$, which
means the Markov chain is aperiodic.
\end{proof}

The stationary distribution for $\mathcal{M}(\mathcal{W}(P),A)$ is given by
\[
	\Psi_\pi = \sum_{\substack{\sigma \in S_n\\ [\sigma]_{{\mathcal{W}(P)}} = \pi}} 
	\left( \prod_{i=1}^n \frac{x_{\sigma_i}}{1-\sum_{j=1}^{i-1} x_{\sigma_j}} \right) 
	\qquad \text{for all $\pi \in \mathcal{L}(P)$.}
\]

\begin{theorem}
\label{theorem.expected new}
The expected value $E[\tau]$ for $\mathcal{M}(\mathcal{W}(P),A)$ is bounded above by $n \ln(n) + n \gamma$.
\end{theorem}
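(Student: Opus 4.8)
The plan is to recognize $\tau$ as an instance of the coupon collector problem and then import, essentially unchanged, the expectation computation already carried out for the Tsetlin library in Section~\ref{section.tsetlin}. The starting observation is that the minimal ideal $K(\mathcal{W}(P))=\mathcal{L}(P)$ is exactly the set of length-$n$ elements of $\mathcal{W}(P)$: a length-$n$ subword of a linear extension is the whole linear extension, and $\pi\pi'=\pi$ for $\pi,\pi'\in\mathcal{L}(P)$, so $\mathcal{L}(P)$ is a left-zero subsemigroup and hence the minimal ideal (Theorem~\ref{theorem.ASST} therefore applies). By the product rule~\eqref{equation.product} together with Lemma~\ref{lemma.straighten}, applying a generator $a\in A=[n]$ to $w\in\mathcal{W}(P)$ leaves $w$ unchanged when $a$ already occurs in $w$, and otherwise yields $\mathsf{straight}(wa)\in\mathcal{W}(P)$, whose length is $|w|+1$ and whose letter set is that of $w$ together with $a$. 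So, starting from the root $\mathbbm{1}$ (the empty word), the set of letters present in the current state only grows, and the walk enters $K(\mathcal{W}(P))$ precisely at the first time all $n$ letters of $A$ have been applied. In other words, $\tau$ is exactly the coupon collector time for $n$ coupons drawn independently with probabilities $x_1,\dots,x_n$, and — this is the whole point — its law does not depend on the poset $P$.

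With that in hand I would finish as follows. This $\tau$ is literally the same random variable as the absorption time into $K(P(n))$ for the Tsetlin library of Section~\ref{section.tsetlin} (where $K(P(n))$ is reached once each of the $n$ books has been pulled to the front at least once), so $E[\tau]$ equals the expected absorption time there. Equivalently, one evaluates it directly: when the current state already contains the letters $\{b_1,\dots,b_k\}$, the number of additional steps needed to pick up a new letter is geometric with success probability $1-(x_{b_1}+\cdots+x_{b_k})$, contributing $1/\bigl(1-(x_{b_1}+\cdots+x_{b_k})\bigr)$ to the expectation, and summing these contributions reproduces~\eqref{equation.EG Tsetlin}. Specializing to the uniform weights $x_i=1/n$ of Section~\ref{section.tsetlin} collapses this to $E[\tau]=n\sum_{i=1}^n\tfrac1i$, whence the bound $n\ln n+n\gamma$ (up to a bounded additive term, exactly as in~\eqref{equation.EG Tsetlin n}), using that $\sum_{i=1}^n\tfrac1i-\ln n\to\gamma$. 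Theorem~\ref{theorem.ASST} and Markov's inequality~\eqref{equation.Markov inequality} then convert this into $\mathsf{Pr}(\tau>t)\le (n\ln n+n\gamma)/(t+1)$, i.e. mixing time $O(n\log n)$.

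The step that actually needs care — and that carries the conceptual content of Theorem~\ref{theorem.expected new} — is precisely this decoupling of the hitting time from the poset: the stationary distribution of $\mathcal{M}(\mathcal{W}(P),A)$ does depend on $P$ (it is a nontrivial lumping of the Tsetlin weights~\eqref{equation.psi Tsetlin}), and the strongly connected components of its right Cayley graph vary with $P$, yet $\tau$ is insensitive to all of this. What must be verified for that is exactly that the straightening operation~\eqref{equation.product} never changes the letter set of a word and that applying a genuinely new letter always increases the length by one (which is what Lemma~\ref{lemma.straighten} and its proof give), so that the first hitting time of $K(\mathcal{W}(P))$ is governed solely by when all $n$ letters have occurred. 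Once that is in place there is no new analytic work: the estimate is the coupon collector computation already done for the Tsetlin library.
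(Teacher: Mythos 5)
Your proposal is correct and follows essentially the same route as the paper: the paper's (very terse) proof likewise observes that from a word $w$ of length $k$ there are exactly $n-k$ transition arrows in $\mathsf{RCay}(\mathcal{W}(P),A)$, one for each letter not yet in $w$, and then invokes "the same arguments as for the Tsetlin library" to get the coupon-collector bound $E[\tau]\leqslant n\ln(n)+n\gamma$. Your write-up simply makes explicit the decoupling from $P$ and the role of Lemma~\ref{lemma.straighten} that the paper leaves implicit.
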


\begin{proof}
For a word $w \in \mathcal{W}(P)$, its length $|w|=k$ is bounded by $0\leqslant k \leqslant n$.
For a word of length $|w|=k$, there are $n-k$ transition arrows in $\mathsf{RCay}(\mathcal{W}(P),A)$
originating at $w$, given by all the letters that do not appear in $w$. Hence by the same arguments as for the 
Tsetlin library $E[\tau] \leqslant n \ln(n) + n \gamma$.
\end{proof}

\begin{remark}
Note that the Markov chain $\mathcal{M}(\mathcal{W}(P),A)$ is not identical to the promotion Markov chain.
For example, left multiplication by 4 on $2143$ in $(\mathcal{W}(P),\{1,2,3,4\})$ for the poset in
Example~\ref{example.promotion} yields $2143$, whereas we see from Figure~\ref{figure.promotion}
that in the promotion Markov chain $2143$ goes to $1243$ under $\partial_4$. The full Markov chain transition
diagram is given in Figure~\ref{figure.markov new}.
\end{remark}

Theorem~\ref{theorem.expected new} shows that the mixing time for $\mathcal{M}(\mathcal{W}(P),[n])$ is of order 
$O(n \log n)$. Of course, this does not take the computational complexity of computing the 
product~\eqref{equation.product} into account. For a word of length $k$, this involves up to $k$ swaps.

\begin{figure}
\begin{tikzpicture}[>=latex,line join=bevel,]
\node (node_0) at (179.0bp,309.5bp) [draw,draw=none] {$\mathtt{(1, 2, 4, 3)}$};
  \node (node_1) at (78.0bp,235.5bp) [draw,draw=none] {$\mathtt{(2, 1, 4, 3)}$};
  \node (node_2) at (24.0bp,160.5bp) [draw,draw=none] {$\mathtt{(2, 3, 1, 4)}$};
  \node (node_3) at (133.0bp,84.5bp) [draw,draw=none] {$\mathtt{(2, 1, 3, 4)}$};
  \node (node_4) at (133.0bp,9.5bp) [draw,draw=none] {$\mathtt{(1, 2, 3, 4)}$};
  \draw [yellow,->] (node_0) ..controls (212.86bp,313.04bp) and (221.0bp,311.93bp)  .. (221.0bp,309.5bp) .. controls (221.0bp,308.02bp) and (217.98bp,307.03bp)  .. (node_0);
  \definecolor{strokecol}{rgb}{0.0,0.0,0.0};
  \pgfsetstrokecolor{strokecol}
  \draw (230.0bp,309.5bp) node {$4$};
  \draw [blue,->] (node_0) ..controls (209.46bp,332.54bp) and (239.0bp,329.7bp)  .. (239.0bp,309.5bp) .. controls (239.0bp,292.07bp) and (216.98bp,287.57bp)  .. (node_0);
  \draw (248.0bp,309.5bp) node {$1$};
  \draw [red,->] (node_0) ..controls (142.83bp,297.53bp) and (128.36bp,291.03bp)  .. (117.0bp,282.5bp) .. controls (106.08bp,274.29bp) and (96.274bp,262.44bp)  .. (node_1);
  \draw (126.0bp,272.5bp) node {$2$};
  \draw [green,->] (node_0) ..controls (180.69bp,288.18bp) and (182.38bp,264.62bp)  .. (183.0bp,244.5bp) .. controls (184.86bp,184.06bp) and (174.31bp,167.87bp)  .. (150.0bp,112.5bp) .. controls (148.4bp,108.85bp) and (146.4bp,105.1bp)  .. (node_3);
  \draw (191.0bp,198.5bp) node {$3$};
  \draw [blue,->] (node_1) ..controls (111.86bp,248.66bp) and (124.65bp,254.95bp)  .. (135.0bp,262.5bp) .. controls (146.87bp,271.16bp) and (158.16bp,283.33bp)  .. (node_0);
  \draw (164.0bp,272.5bp) node {$1$};
  \draw [yellow,->] (node_1) ..controls (111.86bp,239.04bp) and (120.0bp,237.93bp)  .. (120.0bp,235.5bp) .. controls (120.0bp,234.02bp) and (116.98bp,233.03bp)  .. (node_1);
  \draw (129.0bp,235.5bp) node {$4$};
  \draw [red,->] (node_1) ..controls (108.46bp,258.54bp) and (138.0bp,255.7bp)  .. (138.0bp,235.5bp) .. controls (138.0bp,218.07bp) and (115.98bp,213.57bp)  .. (node_1);
  \draw (147.0bp,235.5bp) node {$2$};
  \draw [green,->] (node_1) ..controls (63.098bp,214.8bp) and (47.065bp,192.54bp)  .. (node_2);
  \draw (67.0bp,198.5bp) node {$3$};
  \draw [green,->] (node_2) ..controls (57.864bp,164.24bp) and (66.0bp,163.06bp)  .. (66.0bp,160.5bp) .. controls (66.0bp,158.94bp) and (62.979bp,157.89bp)  .. (node_2);
  \draw (75.0bp,160.5bp) node {$3$};
  \draw [red,->] (node_2) ..controls (54.459bp,183.78bp) and (84.0bp,180.91bp)  .. (84.0bp,160.5bp) .. controls (84.0bp,142.8bp) and (61.785bp,138.29bp)  .. (node_2);
  \draw (93.0bp,160.5bp) node {$2$};
  \draw [yellow,->] (node_2) ..controls (39.356bp,141.3bp) and (54.749bp,123.86bp)  .. (71.0bp,112.5bp) .. controls (79.712bp,106.41bp) and (89.962bp,101.14bp)  .. (node_3);
  \draw (80.0bp,122.5bp) node {$4$};
  \draw [blue,->] (node_2) ..controls (39.577bp,130.73bp) and (69.902bp,75.769bp)  .. (104.0bp,36.5bp) .. controls (107.43bp,32.552bp) and (111.38bp,28.569bp)  .. (node_4);
  \draw (83.0bp,84.5bp) node {$1$};
  \draw [yellow,->] (node_3) ..controls (136.5bp,109.12bp) and (139.38bp,143.9bp)  .. (129.0bp,170.5bp) .. controls (121.56bp,189.57bp) and (106.56bp,207.58bp)  .. (node_1);
  \draw (143.0bp,160.5bp) node {$4$};
  \draw [green,->] (node_3) ..controls (118.93bp,103.85bp) and (104.67bp,121.37bp)  .. (89.0bp,132.5bp) .. controls (79.491bp,139.26bp) and (68.095bp,144.85bp)  .. (node_2);
  \draw (119.0bp,122.5bp) node {$3$};
  \draw [red,->] (node_3) ..controls (166.86bp,91.982bp) and (175.0bp,89.621bp)  .. (175.0bp,84.5bp) .. controls (175.0bp,81.379bp) and (171.98bp,79.284bp)  .. (node_3);
  \draw (184.0bp,84.5bp) node {$2$};
  \draw [blue,->] (node_3) ..controls (133.0bp,64.368bp) and (133.0bp,43.594bp)  .. (node_4);
  \draw (142.0bp,46.5bp) node {$1$};
  \draw [yellow,->] (node_4) ..controls (190.83bp,21.837bp) and (248.0bp,41.476bp)  .. (248.0bp,84.5bp) .. controls (248.0bp,235.5bp) and (248.0bp,235.5bp)  .. (248.0bp,235.5bp) .. controls (248.0bp,262.0bp) and (224.03bp,282.97bp)  .. (node_0);
  \draw (257.0bp,160.5bp) node {$4$};
  \draw [green,->] (node_4) ..controls (145.33bp,23.178bp) and (149.85bp,29.724bp)  .. (152.0bp,36.5bp) .. controls (154.68bp,44.974bp) and (154.61bp,48.003bp)  .. (152.0bp,56.5bp) .. controls (150.83bp,60.319bp) and (148.94bp,64.084bp)  .. (node_3);
  \draw (162.0bp,46.5bp) node {$3$};
  \draw [red,->] (node_4) ..controls (118.42bp,23.304bp) and (113.43bp,29.664bp)  .. (111.0bp,36.5bp) .. controls (108.02bp,44.875bp) and (108.09bp,48.099bp)  .. (111.0bp,56.5bp) .. controls (112.37bp,60.462bp) and (114.56bp,64.29bp)  .. (node_3);
  \draw (120.0bp,46.5bp) node {$2$};
  \draw [blue,->] (node_4) ..controls (166.86bp,16.588bp) and (175.0bp,14.352bp)  .. (175.0bp,9.5bp) .. controls (175.0bp,6.5436bp) and (171.98bp,4.5583bp)  .. (node_4);
  \draw (184.0bp,9.5bp) node {$1$};
\end{tikzpicture}
\caption{The Markov chain $\mathcal{M}(\mathcal{W}(P),[4])$ for the poset of Example~\ref{example.promotion}.
\label{figure.markov new}}
\end{figure}
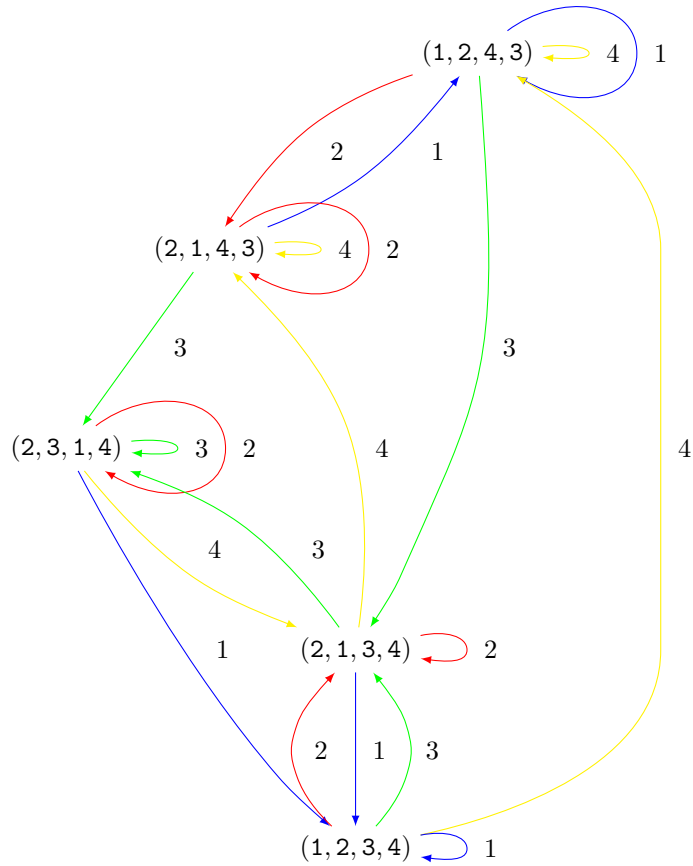

\bibliographystyle{plain}
\bibliography{mixing_time}{}

\end{document}